\newcommand{\Tr}{\mathrm{Tr}}
\newcommand{\Gal}{\mathrm{Gal}}
\newcommand{\GW}{\mathrm{GW}}
\newcommand{\bP}{\mathbb P}
\newcommand{\bQ}{\mathbb Q}
\newcommand{\bR}{\mathbb R}
\newcommand{\bZ}{\mathbb Z}
\newcommand{\bF}{\mathbb F}
\newcommand{\cN}{\mathcal N}
\newcommand{\Neck}{\mathrm{Neck}}
\newcommand{\Emb}{\mathrm{Emb}}
\newcommand{\Orb}{\mathrm{Orb}}
\newcommand{\SN}{\Orb(C_n,\Neck(n,j))}
\newcommand{\SoN}{\Orb_{odd}(C_n,\Neck(n,j))}
\newcommand{\SeN}{\Orb_{even}(C_n,\Neck(n,j))}
\newcommand{\SNhalf}{\Orb(C_{\frac n2},\Neck(\frac n2,\frac j2))}
\newcommand{\SoNhalf}{\Orb_{odd}(C_{\frac n2},\Neck(\frac n2,\frac j2))}
\newcommand{\SeNhalf}{\Orb_{even}(C_{\frac n2},\Neck(\frac n2,\frac j2))}
\newcommand{\SNt}{\Orb(C_{2j}, \Neck(2j,j)^\tau)}
\newcommand{\SeNt}{\Orb_{even}(C_{2j}, \Neck(2j,j)^\tau)}
\newcommand{\SNjj}{\Orb(C_{2j}, \Neck(2j,j))}
\newtheorem{theorem}{Theorem}[section]
\newtheorem{lemma}[theorem]{Lemma}
\newtheorem{proposition}[theorem]{Proposition}
\newtheorem{corollary}[theorem]{Corollary}
\theoremstyle{definition}
\newtheorem{definition}[theorem]{Definition}
\newtheorem{example}[theorem]{Example}
\theoremstyle{remark}
\newtheorem{remark}[theorem]{Remark}
\numberwithin{equation}{section}
\begin{document}

\title{Quadratically enriched binomial coefficients over a finite field}


\author{Chongyao Chen}
\address{Duke University, Durham, NC, USA}
\curraddr{}
\email{chongyao.chen@duke.edu}
\thanks{CC was partially supported by National Science Foundation Awards DMS-2304981 and DMS-2405191}

\author{Kirsten Wickelgren}
\address{Duke University, Durham, NC, USA}
\curraddr{}
\email{kirsten.wickelgren@duke.edu}
\thanks{KW was partially supported by National Science Foundation Awards DMS-2103838 and DMS-2405191}

\subjclass[2020]{Primary 05A10, 11E81, 14F42}

\date{December 2024}

\begin{abstract}
We compute an analogue of Pascal's triangle enriched in bilinear forms over a finite field. This gives an arithmetically meaningful count of the ways to choose $j$ ring homomorphisms into an algebraic closure from an \'etale extension of degree $n$. We also compute a quadratic twist. These (twisted) enriched binomial coefficients are defined in joint work of Brugall\'e and the second-named author, building on work of Serre. Such binomial coefficients support curve counting results over non-algebraically closed fields, using $\mathbb{A}^1$-homotopy theory.

\end{abstract}

\maketitle

\section{Introduction}

We consider combinatorics enriched in bilinear forms, in the sense that an integer $n$ is replaced by the class of a symmetric, non-degenerate bilinear form on a vector space of dimension $n$. The resulting binomial coefficients arose in \cite{Brugalle-WickelgrenABQ} in the context of curve counting over non-algebraically closed fields: to count curves on surfaces, one is led to certain degeneration formulas in which curves of lower degrees are glued together. To perform such gluings, one chooses closed points. However, over non-algebraically closed fields, the points have different residue fields. To obtain a count retaining arithmetic information, it is effective to replace integer valued counts with ones enriched in bilinear forms. The counts now take values in the Grothendieck--Witt group of the base field, defined to be the group completion of isomorphism classes of symmetric, non-degenerate bilinear forms. See for example \cite{cubicsurface} \cite{Levine-EC} \cite{Wendt-oriented_Schubert_calculus} \cite{PMPR-tropical_GW_invts} \cite{McKean-Bezout} \cite{Cotterill-Darago-Han}. In \cite{Brugalle-WickelgrenABQ}, Erwan Brugall\'e and the second named author obtained a wall-crossing formula for $\mathbb{A}^1$ Gromov--Witten invariants using combinatorial identities enriched in bilinear forms. Here we systematically compute the analogue of Pascal's triangle over a finite field of odd characteristic and a quadratic twist, arising in the enumerative context of \cite{Brugalle-WickelgrenABQ},\cite{degree}.

 Let $k =  \bF_q$ be a finite field with odd characteristic. Let $\GW(k)$ denote the Grothendieck--Witt group of $k$ and let $u \in \GW(\bF_q)$ denote the class of the bilinear form $\bF_q \times \bF_q \to \bF_q$ sending $(x,y)$ to $\mu x y$, where $\mu$ is a non-square in $\bF_q^*$. There is a ring structure on $\GW(k)$ induced from the tensor product of forms, and the unit, denoted by $1$, is represented by the bilinear form $\bF_q \times \bF_q \to \bF_q$ sending $(x,y)$ to $xy$. For an \'etale $k$-algebra $L$, the paper \cite{Brugalle-WickelgrenABQ} defines $\binom{L/k}{j} $ in $\GW(k)$ to be the trace form of the \'etale $k$-algebra associated via the Galois correspondence to the set of subsets of size $j$ of the set of $k$-maps of $L$ into $\overline{k}$. Here, $\overline{k}$ denotes the algebraic closure of $k$. 
 
 For a quadratic extension $Q$ of $k$ and $[L:k] = 2j$, this set has a twisted action, where $\mathrm{Gal}(\overline{Q}/k)$ acts by swapping two complementary subsets. This defines a quadratically twisted binomial coefficient $\binom{L[Q]/k}{j}$ in $\GW(k)$. We give these definitions in detail in Section~\ref{Section:Quadratically_enriched_binomial_coefficients}. 
 
In this paper, we show the following closed formula for the quadratically enriched binomial coefficients over $\bF_q$.
\begin{theorem}\label{thm:closedformula-1}
    Let $q$ be an odd prime power and let $j$ be a non-negative integer. Let $L/k$ be the finite extension of $\bF_q$ of degree $n$. Then 
    \[
    \binom{L/k}{j} = \binom{n}{j} - (1 -u)\cdot \binom{\frac{n-2}{2}}{\frac{j-1}{2}} \in \GW(\bF_q),
    \]
    where $u$ is the non-square class in $\GW(k)$ and our convention is that $\binom{a}{b}:=0$, if either $a$ or $b$ is not in $\bZ$.
\end{theorem}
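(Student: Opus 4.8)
The plan is to unwind the definition of $\binom{L/k}{j}$ into a sum of trace forms of subfields of $L$, to evaluate each such trace form in $\GW(\bF_q)$, and thereby reduce the statement to a congruence modulo $2$ between a count of orbits and the binomial coefficient on the right. Since $L$ is the degree-$n$ extension of $\bF_q$, it is cyclic over $k$ with Galois group $C_n$ generated by the $q$-power Frobenius, and $\Hom_k(L,\overline k)$ is a free transitive $C_n$-set; hence the set of its $j$-element subsets is the $C_n$-set $\Neck(n,j)$ of weight-$j$ cyclic binary strings of length $n$, and under the Galois correspondence a $C_n$-orbit $O$ corresponds to the unique subfield of $L$ of degree $|O|$ over $k$. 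Writing $q_m=\langle\bF_{q^m},\Tr_{\bF_{q^m}/\bF_q}\rangle\in\GW(\bF_q)$, additivity of the trace form over products of \'etale algebras gives $\binom{L/k}{j}=\sum_O q_{|O|}$, the sum over $\Orb(C_n,\Neck(n,j))$. The form $q_m$ has rank $m$, and its discriminant equals that of a minimal polynomial of a primitive element of $\bF_{q^m}$, hence is a square precisely when $\Gal(\bF_{q^m}/\bF_q)\cong C_m$ lies in the alternating group, i.e. precisely when $m$ is odd; as nondegenerate symmetric bilinear forms over a finite field are classified by rank and discriminant, $q_m=m\langle 1\rangle$ for $m$ odd and $q_m=(m-1)\langle 1\rangle+\langle u\rangle=m-(1-u)$ for $m$ even. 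Summing, and using $\sum_O|O|=\binom nj$,
\[
\binom{L/k}{j}=\binom nj-N(n,j)\,(1-u),\qquad N(n,j):=\#\{\,O\in\Orb(C_n,\Neck(n,j)):|O|\text{ even}\,\}.
\]
Since $\langle u,u\rangle\cong\langle 1,1\rangle$ over $\bF_q$ we have $2(1-u)=0$ in $\GW(\bF_q)$, so it suffices to prove $N(n,j)\equiv\binom{(n-2)/2}{(j-1)/2}\pmod 2$, the right-hand side being $0$ unless $n$ is even and $j$ is odd.

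If $n$ is odd then $C_n$ has no even-order quotient, so $N(n,j)=0$ and both sides vanish. Suppose $n$ is even. Whenever $v_2(j)<v_2(n)$ — in particular whenever $j$ is odd — every orbit is even: an orbit of odd size $m$ would consist of necklaces that are $(n/m)$-fold repetitions of a length-$m$ block, forcing $(n/m)\mid j$ and hence $v_2(n)=v_2(n/m)\le v_2(j)$, a contradiction. So for $j$ odd, $N(n,j)$ equals the total number $b(n,j)=|\Orb(C_n,\Neck(n,j))|$ of weight-$j$ necklaces. Reversal of strings is an involution on this finite set of necklaces, so by the involution principle $b(n,j)$ is congruent mod $2$ to the number of reversal-fixed necklaces, i.e. of necklaces with a reflection symmetry. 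When $n$ is even and $j$ is odd such an axis cannot pass through two antipodal edge-midpoints (that would force even weight), so it passes through two antipodal beads exactly one of which is black; there are $2\binom{(n-2)/2}{(j-1)/2}$ strings symmetric about a fixed bead-axis (choose which axis bead is black and which of the $(n-2)/2$ mirror-pairs are black), and each reversal-fixed necklace is represented by exactly two of these, so their number is $\binom{(n-2)/2}{(j-1)/2}$, as required.

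It remains to treat $n$ and $j$ both even, where $N(n,j)$ must be shown even. If $v_2(j)<v_2(n)$ then $N(n,j)=b(n,j)$ by the argument above, and the involution principle together with the count of reversal-fixed necklaces gives $b(n,j)\equiv\binom{n/2}{j/2}\pmod 2$, which is even because $j/2$ and $n/2-j/2$ have equal $2$-adic valuation and hence produce a carry in base-$2$ addition (Kummer). If $v_2(j)\ge v_2(n)=:a$, then the odd-size orbits have period dividing $n_{\mathrm{odd}}$ and, by restriction to one period block, biject with the necklaces of length $n_{\mathrm{odd}}$ and weight $j/2^a$, so $N(n,j)=b(n,j)-b(n_{\mathrm{odd}},j/2^a)$; applying the involution principle to both terms together with the Lucas-type identity $\binom xy\equiv\binom{\lfloor x/2\rfloor}{\lfloor y/2\rfloor}\binom{x\bmod 2}{y\bmod 2}\pmod 2$ then yields the required congruence.

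I expect the main difficulty to lie in the combinatorial congruence, and specifically in the bookkeeping for this last case and in the exact count of reversal-fixed necklaces: for imprimitive (periodic) necklaces one must carefully track the multiplicity with which the string representatives lie over a given reflection axis, and this multiplicity interacts with the $2$-adic valuations of $n$ and $j$ in exactly the way needed to reproduce $\binom{(n-2)/2}{(j-1)/2}$ on the nose.
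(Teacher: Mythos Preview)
Your reduction to the congruence $N(n,j)\equiv\binom{(n-2)/2}{(j-1)/2}\pmod 2$ is correct and coincides with the paper's Lemma~\ref{lem:deltaGoeven}. The case $n$ odd is trivial in both treatments. For $n$ even and $j$ odd your argument is complete and in fact cleaner than the paper's: you use the flip involution on necklaces and count fixed points directly, whereas the paper (Proposition~\ref{prop:case2}) goes through M\"obius inversion and a somewhat intricate pairing of terms before invoking Lucas. Your observation that every orbit has even period when $v_2(j)<v_2(n)$, so that each reversal-fixed necklace contributes exactly two axis-symmetric strings, is the key point that makes the direct count work, and it is correct.

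The genuine gap is the case $n,j$ both even, and it is larger than your final paragraph suggests. Your assertion that ``the count of reversal-fixed necklaces gives $b(n,j)\equiv\binom{n/2}{j/2}\pmod 2$'' is true but is not a bookkeeping detail: it is essentially the content of Sections~\ref{subsection:comb1}--\ref{subsection:case3} of the paper. When $n$ and $j$ are both even, a reversal-fixed necklace can have a symmetry axis through two beads (type~2) or through two edge-midpoints (type~1); these two families overlap exactly on the odd-period orbits (Lemma~\ref{lem:intersection}), and the multiplicity with which a necklace is represented by an axis-symmetric string depends on the period in a way that differs between the two types. The paper handles this by decomposing each necklace along a type-1 axis into a pair of half-length necklaces via the map $\phi$ of \eqref{defn:phi}, computing the fibre sizes (Lemmas~\ref{lem:fibperiod1}--\ref{lem:fibperiod2}, Proposition~\ref{prop:S1}), and then treating the type-2 contribution separately with a further case split on $\nu_2(n)$ (Propositions~\ref{prop:oevenfixvalue} and~\ref{prop:orb22}). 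Only after all of this does one obtain closed forms to which Kummer's theorem can be applied. Your sketch does not supply any of this structure; in particular, neither the congruence $b(n,j)\equiv\binom{n/2}{j/2}$ nor the analogous congruence for $b(n_{\mathrm{odd}},j/2^a)$ follows from the naive string count without controlling these multiplicities.
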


Theorem~\ref{thm:closedformula-1} gives an enrichment of Pascal's triangle in $\GW(\bF_q)$. The first few rows $n=0,1,\ldots 8$ of the (untwisted, $\GW(\bF_q)$-enriched) Pascal's triangle ${\bF_{q^n}/\bF_q \choose j }$ are the following: 

\[
\begin{array}{ccccccccccccccccccc}
     &  &  &  &  &  &  &  &  &  & 1 &  &  &  &  &  &  &  &  \\
     &  &  &  &  &  &  &  &  & 1 &  & 1 &  &  &  &  &  &  &  \\
     &  &  &  &  &  &  &  & 1 &  & 1+u &  & 1 &  &  &  &  &  &  \\
      &  &  &  &  &  &  & 1 &  & 3 &  & 3 &  & 1 &  &  &  &  &  \\
     &  &  &  &  &  & 1 &  & 3+u &  & 6 &  & 3+u &  & 1 &  &  &  &  \\
     &  &  &  &  & 1 &  & 5 &  & 10 &  & 10 &  & 5 &  & 1 &  &  &  \\
     &  &  &  & 1 &  & 5+u &  & 15 &  & 20 &  & 15 &  & 5+u &  & 1 &  &  \\
     &  &  & 1 &  & 7 &  & 21 &  & 35 &  & 35 &  & 21 &  & 7 &  & 1 &  \\
     &  & 1 &  & 7+u &  & 28 &  & 55+u &  & 70 &  & 55+u &  & 28 &  & 7+u &  & 1 \\
\end{array}.
\]

We also compute the twisted quadratically enriched binomial coefficients over a finite field:

\begin{theorem}\label{thm:closedformula-2}
    Let $q$ be an odd prime power and let $j$ be a non-negative integer. Let $L/k$ be the finite field extension of $\bF_q$ of degree $2j$, and let $Q/k$ the degree $2$ field extension of $\bF_q$. Then
    \[
    \binom{L[Q]/k}{j}= \frac12\binom{2j}{j}\cdot(1+u) \in \GW(\bF_q).
    \]
\end{theorem}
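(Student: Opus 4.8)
The plan is to reduce the computation of the twisted trace form to a count of orbits of a finite group acting on the set of $j$-element subsets of the $2j$ embeddings of $L$ into $\overline{k}$, and then to track the residue fields of these orbits. Concretely, let $G = \Gal(\overline{k}/k) \cong \widehat{\bZ}$, which acts on the $2j$ embeddings through its quotient $\Gal(L[Q]/k)$; since $L/k$ and $Q/k$ are the unique extensions of their degrees, the relevant action is the standard rotation action of $C_{2j}$ on the $2j$ embeddings together with the complementation twist coming from $\Gal(Q/k)$. The set $S$ of $j$-subsets of a $2j$-cycle, with the action of $C_{2j}$ by rotation twisted by the order-two complement, is (up to relabelling) the "necklace" orbit set $\SNt$ appearing in the paper's macros; I would first identify $S$ with this model. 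The étale algebra $L[Q]$ associated to this $G$-set decomposes as a product of field extensions $\prod_i k_{d_i}$, one for each orbit, where $d_i$ is the size of the orbit, and the twisted trace form is $\binom{L[Q]/k}{j} = \sum_i \Tr_{k_{d_i}/k}\langle 1\rangle$.

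The key input is the standard computation (via Galois descent / the theory of trace forms of finite fields) of $\Tr_{k_d/k}\langle 1\rangle$ in $\GW(\bF_q)$: it equals $\tfrac d2(1+u)$ when $d$ is even, and equals $\tfrac{d-1}{2}(1+u) + \langle 1\rangle = \tfrac{d-1}{2}(1+u)+1$ when $d$ is odd — in particular it is $\tfrac d2(1+u)$ exactly when $d$ is even, and has rank $d$ always with discriminant recording the parity. So I would split the sum over orbits into even-sized and odd-sized orbits:
\[
\binom{L[Q]/k}{j} = \sum_{\text{orbits of even size } d} \frac d2 (1+u) \;+\; \sum_{\text{orbits of odd size } d}\Bigl(\frac{d-1}{2}(1+u) + 1\Bigr).
\]
Since $\sum_i d_i = |S| = \binom{2j}{j}$, the first batch of terms plus the $(d-1)/2$ part of the second batch assemble (after adding and subtracting the contribution of the odd orbits) into $\tfrac12\binom{2j}{j}(1+u)$ modulo a correction term supported on odd orbits; more precisely the total is $\tfrac12\bigl(\binom{2j}{j} - N_{\mathrm{odd}}\bigr)(1+u) + N_{\mathrm{odd}}\langle 1\rangle$, where $N_{\mathrm{odd}} = |\SoNt|$ is the number of odd-sized orbits. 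Hence it suffices to show $N_{\mathrm{odd}} = 0$: the twisted necklace action of $C_{2j}$ on balanced $2j$-bead necklaces has \emph{no orbit of odd cardinality}. This is the one genuinely combinatorial step, and I expect it to be the main obstacle. The reason it should hold: the complementation element $\tau$ is an involution that is fixed-point-free on $S$ (a $j$-subset of a $2j$-set never equals its complement), and it commutes with — indeed, in the twisted group it is built from — the generator; an orbit of odd size would be preserved by the twist in a way forcing a fixed point of an odd-order-times rotation composed with $\tau$, which one rules out by a parity count of fixed points (a Burnside/fixed-point argument: $\tau$ composed with any rotation has an even number of fixed balanced necklaces, because fixed configurations come in antipodal pairs). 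I would make this precise by computing, for each divisor structure, $|\mathrm{Fix}(g)|$ for $g$ a generator of a hypothetical odd-order cyclic quotient and showing it is even, contradicting the orbit–stabilizer relation; alternatively, observe that $\langle \tau\rangle$ acts freely on every orbit, so every orbit size is even. This last observation — that $\tau$ lies in every point stabilizer's complement, equivalently that no rotation composed with the complement fixes a balanced necklace of the relevant type — is what I'd isolate as a lemma and prove first, as everything else is bookkeeping with the two formulas for $\Tr_{k_d/k}\langle 1\rangle$ above.

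Finally, I would assemble: with $N_{\mathrm{odd}} = 0$ we get $\binom{L[Q]/k}{j} = \tfrac12\binom{2j}{j}(1+u)$ directly, noting $\binom{2j}{j}$ is even (it equals $2\binom{2j-1}{j-1}$) so that $\tfrac12\binom{2j}{j}$ is an honest integer and the right-hand side makes sense in $\GW(\bF_q)$. One subtlety worth checking en route is that $\Gal(\overline{k}/k)$ acting on the $2j$ embeddings of $L[Q]$ really does realize the full twisted $C_{2j}$-action (not a proper subgroup), which follows because $L[Q]$ is a field — it is the compositum $L\cdot Q$ only if $2 \nmid j$; in general one checks that the twisted $G$-set is transitive, equivalently that $L[Q]$ is connected, from the definition in Section~\ref{Section:Quadratically_enriched_binomial_coefficients}. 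I would record this transitivity at the outset so that the orbit decomposition above is the decomposition of a single field's trace form under the subgroup corresponding to $S$, keeping the identification with $\SNt$ clean.
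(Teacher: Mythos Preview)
Your proposal has two genuine gaps. First, the trace-form formula $\Tr_{k_d/k}\langle 1\rangle = \tfrac d2(1+u)$ for $d$ even is incorrect: the actual value is $d-1+u$ (equation~\eqref{eqn:tracecompfield}), and the difference $(\tfrac d2 -1)(u-1)$ is nonzero whenever $d\equiv 0\pmod 4$, so your orbit-by-orbit sum already computes the wrong class in $\GW(\bF_q)$. Second, and more seriously, the key claim $N_{\mathrm{odd}}=0$ is false. Your argument rests on the complementation $\tau=(1,e)$ lying in the twisted cyclic subgroup $\langle(r,e)\rangle\subset C_{2j}\times C_2$, but $(r,e)^m=(1,e)$ would require $2j\mid m$ with $m$ odd, which is impossible for $j\ge 1$; so ``$\langle\tau\rangle$ acts freely on every orbit'' has no force. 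Odd twisted orbits genuinely occur: for $j=1$ both elements of $\Neck(2,1)$ are fixed by the twisted generator (rotate once, then swap colours, returns the necklace), and for $j=2$ the alternating necklace $BRBR$ is likewise a fixed point. An odd orbit of size $m$ only forces $r^m\cdot l=e\cdot l$, i.e.\ the untwisted class $[l]$ is colour-swap invariant---no contradiction. This is precisely the phenomenon isolated in the paper's comparison of $\pi'$ with $\pi$ in Section~\ref{subsection:reduction}.

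The paper's proof is structurally different: rather than ruling out odd orbits, it reduces (via $2(u-1)=0$) to computing the \emph{parity} of $|\SeNt|$, introduces a ``swapping'' $C_2$-action on twisted orbits (Lemma~\ref{lem:deltaprimevalue}) to cut down to swap-fixed even orbits, handles $j$ odd by freeness of swapping (Corollary~\ref{cor:deltatwistedodd}), and for $j$ even reduces further to untwisted orbits and cyclic partitions (Sections~\ref{subsection:reduction}--\ref{subsection:relatetopart}) to extract the required parity. Your closing remark about transitivity is also off: the \'etale algebra associated to the twisted $G$-set is in general not a field, and its decomposition into orbits is exactly what the argument must analyse.
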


\begin{example}\label{ex:twisted-enriched-bin-finite-field}
    The first few terms of the sequence ${L[Q]/k \choose j}$ are \[2,5+u,20,69+u,252,924,3432,12869+u,\dots.\]
\end{example}

\subsection{Summary of the proof}
The idea of the proofs is first to rephrase the problems into purely combinatorial forms. In Section \ref{subsection:necklace_interpretation} and \ref{sec:twistenecint}, we show that the proofs of these two theorems reduce to calculations of the parities of the number of orbits of necklaces of even cardinality under different cyclic group actions.

\subsubsection{Untwisted case} The proof of Theorem~\ref{thm:closedformula-1} in the case when $ n $ is odd is very straightforward, as shown in Proposition \ref{prop:case1}.  
In Section \ref{subsection:nevenjodd}, we prove Theorem \ref{thm:closedformula-1} when $ j $ is odd. The proof involves a direct calculation using M\"obius inversion and Lucas's theorem. The remaining task is to show that when both $ n $ and $ j $ are even, the enumeration is always even, which is established combinatorially.

In Section \ref{subsection:comb1}, we use a $ C_2 $-action, referred to as flipping, on the set of cyclic orbits of necklaces. Since we are concerned only with the parity of the enumeration, it suffices to count the $ C_2 $-fixed points. Next, in Section \ref{subsection:comb2}, we introduce the concept of symmetry axes for the orbits of necklaces, which allows us to rewrite the set of $ C_2 $-fixed (symmetric) orbits as a non-disjoint union of two subsets: the sets of symmetric orbits with a symmetry axis passing through two beads (we call this a symmetry axis of type 1) and those with a symmetry axis passing between two pairs of beads (type 2). The enumeration methods for these two subsets are different.

In Section \ref{subsection:comb2}, we also decompose each cyclic orbit of necklaces into two smaller necklace orbits. This decomposition exhibits special properties when the orbit is symmetric, as different types of symmetry axes yield different properties. This decomposition induces a map whose codomain is the symmetric product of the sets of two smaller cyclic orbits of necklaces. The enumeration is carried out by summing the fibers of this map over the codomain. At the end of Section \ref{subsection:comb2}, we provide the enumeration of symmetric orbits with a type 1 symmetry axis when both $ n $ and $ j $ are even.

Using this approach, in Section \ref{subsection:case2}, we enumerate the symmetric orbits with a type 2 symmetry axis for the case where $ n \equiv 2 \mod 4 $ and $ j $ is even. Combining this result with the results in Section \ref{subsection:comb2}, we prove Theorem \ref{thm:closedformula-1} for the case where $ n \equiv 2 \mod 4 $ and $ j $ is even by calculating the parity of the enumeration using Kummer's theorem.

In Section \ref{subsection:case3}, we consider the case where $ n \equiv 0 \mod 4 $. For a symmetric orbit with a type 2 symmetry axis, we show how to reduce it to a symmetric orbit with a type 1 symmetry axis. Since $ n - 2 \equiv 0 \mod 4 $, we can now utilize the results from Section \ref{subsection:comb2} and Section \ref{subsection:case2} to conclude the proof of Theorem \ref{thm:closedformula-1} for the case where $ n \equiv 0 \mod 4 $ and $ j $ is even. This completes the proof of the untwisted case.

\subsubsection{Twisted case} We prove Theorem \ref{thm:closedformula-2} by reducing to an untwisted enumeration that we have studied in detail previously. In Section \ref{subsection:reduction}, we first provide a description of the twisted orbits in terms of untwisted orbits. Next, we construct a $C_2$-action, called swapping, on the set of twisted orbits and reduce the problem to the enumeration of the swapping fixed points. Then, by studying the swapping of fixed twisted orbits, we reduce the problem to an enumeration of untwisted orbits under certain conditions.

In Section \ref{subsection:relatetopart}, we discuss another way to encode the information of an untwisted cyclic orbit of necklaces. This is the marked cyclic equivalence class of partitions. The condition for the untwisted orbits we need to enumerate can be rephrased in terms of the properties of the partitions, which allows us to calculate the parity of the enumeration using partition theory. This concludes the proof of Theorem \ref{thm:closedformula-2}.

More explicit forms of the untwisted and twisted binomial coefficients over a finite field are presented in Section \ref{subsection:expval1} and Section \ref{subsection:expval}, respectively.

\subsection{Acknowledgements} We heartily thank De'Asia Brodie and Zoe Valentine for collaboration on the cases $j=1,2,3$ of Theorem~\ref{thm:closedformula-1}, and Erwan Brugall\'e and Rena Chu for useful discussions. KW received support from NSF DMS-2103838 and NSF DMS-2405191. CC is partially supported by DMS-2304981 and DMS-2405191.

\section{Background}

The M\"obius inversion formula \cite[16.4]{HardyWright} states that for two functions $f,g:\mathbb{Z}_{> 0} \to \mathbb{C}$ such that $g(n) = \sum_{d \vert n} f(n)$ we have that 
\[
f(n) = \sum_{d \vert n} \mu(d) g(\frac{n}{d})
\] where $\mu$ is the M\"obius function
\begin{equation}\label{eq:mu_def_Mobius_inversion}
\mu(j) =
\begin{cases} 
1 & \text{if } j = 1, \\
0 & \text{if } j \text{ has a squared prime factor}, \\
(-1)^v & \text{if } j \text{ is a product of } v \text{ distinct prime numbers}.
\end{cases}
\end{equation}

\subsection{Lucas and Kummer's theorems}
We recall the classical results of Lucas and Kummer.
For a prime $p$, let $\nu_p:\bZ \to \bZ_{\geq0} $ denote the $p$-adic valuation map.
Lucas' classical theorem calculates the mod $p$ residue class for binomial coefficients.
\begin{theorem}\label{thm:lucastheorem}[Lucas's theorem \cite{lucastheorem}]
For $x,y\in\bZ_{\geq 0}$, and $p$ a prime
\[
\binom{x}{y} \equiv \prod_{i}\binom{x_i}{y_i}\mod p
\]    
where $x_i,y_i$ are the coefficients of the $p$-adic expansions of $x$ and $y$, $x = \sum_{i}x_ip^i$, $y = \sum_i y_i p^i$.
\end{theorem}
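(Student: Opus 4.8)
The plan is to prove Lucas's theorem by a generating function computation in the polynomial ring $\bF_p[X]$, using the Frobenius identity $(1+X)^p = 1+X^p$ valid in characteristic $p$. The starting observation is that $\binom{x}{y}$ is, by definition, the coefficient of $X^y$ in $(1+X)^x \in \bZ[X]$, so it suffices to identify the coefficient of $X^y$ in the reduction of $(1+X)^x$ modulo $p$.

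Writing the base-$p$ expansion $x = \sum_i x_i p^i$ with $0 \le x_i < p$, I would factor and reduce modulo $p$:
\[
(1+X)^x = \prod_i \big((1+X)^{p^i}\big)^{x_i} \equiv \prod_i (1+X^{p^i})^{x_i} \pmod{p},
\]
the congruence being an iterated application of $(1+X)^{p} \equiv 1 + X^{p}$ in $\bF_p[X]$ (itself a consequence of $p \mid \binom{p}{k}$ for $0<k<p$). Expanding each factor by the ordinary binomial theorem gives $\prod_i \sum_{b_i=0}^{x_i} \binom{x_i}{b_i} X^{b_i p^i}$.

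The final step is to extract the coefficient of $X^y$ from this product. Any monomial appearing has exponent $\sum_i b_i p^i$ with $0 \le b_i \le x_i < p$; since the base-$p$ digits of an integer are unique, each admissible tuple $(b_i)$ is precisely the digit sequence of its own exponent, so distinct tuples contribute to distinct powers of $X$, and the coefficient of $X^y$ equals $\prod_i \binom{x_i}{y_i}$, with the convention $\binom{x_i}{y_i}=0$ when some digit $y_i$ of $y$ exceeds $x_i$ (consistent, since no such term appears). Comparing with the coefficient of $X^y$ in $(1+X)^x$ yields the claim. There is no genuine obstacle here; the only point needing care is the digit bookkeeping — verifying that the exponents $\sum_i b_i p^i$ range over the integers without collision, so that no cancellation occurs — and one could alternatively argue by induction on the number of $p$-adic digits of $x$ using Pascal's recursion, but the polynomial argument is shorter and cleaner.
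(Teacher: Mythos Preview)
Your proof is correct and is the standard generating-function argument for Lucas's theorem. The paper does not supply its own proof of this statement: it is quoted as a classical result with a citation to Lucas's original paper, so there is no proof in the paper to compare your approach against.
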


Define the $p$-adic carrier function for an integer $x$ as $S_p(x) = \sum_i{x_i}$, where $x = \sum_{i}x_ip^i$ is the $p$-adic expansion. Kummer's classical theorem calculates the $p$-adic valuation of binomial coefficients.
\begin{theorem}[Kummer's theorem \cite{Kummertheorem}]
    The $p$-adic valuation of $\binom{n}{m}$ is
    \[
    \nu_p\binom{n}{m} = \frac{S_p(m)+S_p(n-m)-S_p(n)}{p-1}.
    \]
\end{theorem}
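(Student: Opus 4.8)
The plan is to reduce Kummer's theorem to Legendre's formula for the $p$-adic valuation of a factorial, namely
\[
\nu_p(N!) = \frac{N - S_p(N)}{p-1} \qquad \text{for all } N \in \bZ_{\geq 0}.
\]
Granting this, write $\binom{n}{m} = \frac{n!}{m!\,(n-m)!}$, so that
\[
\nu_p\binom{n}{m} = \nu_p(n!) - \nu_p(m!) - \nu_p((n-m)!) = \frac{\big(n - S_p(n)\big) - \big(m - S_p(m)\big) - \big((n-m) - S_p(n-m)\big)}{p-1}.
\]
Since $n - m - (n-m) = 0$, the ``integer part'' contributions cancel, and one is left with $\nu_p\binom{n}{m} = \frac{S_p(m) + S_p(n-m) - S_p(n)}{p-1}$, as claimed.

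It remains to prove Legendre's formula. Counting each integer in $\{1, \dots, N\}$ with its multiplicity of $p$, one gets $\nu_p(N!) = \sum_{k=1}^{N} \nu_p(k) = \sum_{i \geq 1} \big\lvert \{\, k \leq N : p^i \mid k \,\} \big\rvert = \sum_{i \geq 1} \lfloor N/p^i \rfloor$. Now expand $N = \sum_{j \geq 0} N_j p^j$ in base $p$; for each $i \geq 1$ the fractional part of $N/p^i$ is $\sum_{j < i} N_j p^{j-i} < 1$, so $\lfloor N/p^i \rfloor = \sum_{j \geq i} N_j p^{j-i}$. Interchanging the two summations,
\[
\sum_{i \geq 1} \lfloor N/p^i \rfloor = \sum_{j \geq 1} N_j \sum_{i=1}^{j} p^{j-i} = \sum_{j \geq 1} N_j \cdot \frac{p^j - 1}{p-1} = \frac{1}{p-1}\left( \sum_{j \geq 1} N_j p^j - \sum_{j \geq 1} N_j \right) = \frac{N - S_p(N)}{p-1},
\]
which is Legendre's formula.

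The only real content here is the geometric-series bookkeeping in Legendre's formula; everything else is a one-line cancellation, so I do not anticipate any genuine obstacle. As an alternative route that makes the statement more transparent, one can argue combinatorially: adding $m$ and $n-m$ in base $p$, each carry lowers the digit sum of the running total by $p-1$, so the number of carries is exactly $\frac{S_p(m) + S_p(n-m) - S_p(n)}{p-1}$; and a standard induction (or the factorial computation above) shows $\nu_p\binom{n}{m}$ equals this number of carries. Either presentation is routine, and I would use the factorial version since Legendre's formula is independently useful.
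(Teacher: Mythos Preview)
Your proof is correct and is the standard derivation of Kummer's theorem via Legendre's formula. Note, however, that the paper does not actually prove this statement: it is stated as a classical result with a citation to Kummer's original paper, and is used as a black box in the subsequent corollaries. So there is no ``paper's own proof'' to compare against; your argument simply supplies the omitted classical proof, and does so in the expected way.
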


Some easy but useful corollaries that are relevant to our purpose are the following.
\begin{corollary}\label{cor:corKummer1}
For $n,j\in\bZ$, we have $\nu_2\binom{n}{j} = \nu_2\binom{2n}{2j} = \nu_2\binom{2n+1}{2j+1}$.  
\end{corollary}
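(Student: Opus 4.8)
The plan is to derive all three equalities at once from Kummer's theorem with $p=2$, reducing them to two elementary identities for the binary digit sum $S_2$. Recall that Kummer's theorem gives
\[
\nu_2\binom{a}{b} = S_2(b) + S_2(a-b) - S_2(a)
\]
whenever $0 \le b \le a$. So the whole corollary follows once we understand how $S_2$ interacts with the maps $m \mapsto 2m$ and $m \mapsto 2m+1$.

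The key observation is that if $m = \sum_i m_i 2^i$ is the binary expansion of $m$, then $2m = \sum_i m_i 2^{i+1}$ and $2m+1 = 1 + \sum_i m_i 2^{i+1}$ are the binary expansions of $2m$ and $2m+1$, whence
\[
S_2(2m) = S_2(m), \qquad S_2(2m+1) = S_2(m)+1.
\]
Plugging these into Kummer's formula, and using $2n-2j = 2(n-j)$ as well as $(2n+1)-(2j+1) = 2(n-j)$, gives
\[
\nu_2\binom{2n}{2j} = S_2(j)+S_2(n-j)-S_2(n) = \nu_2\binom{n}{j}
\]
and
\[
\nu_2\binom{2n+1}{2j+1} = \big(S_2(j)+1\big) + S_2(n-j) - \big(S_2(n)+1\big) = S_2(j)+S_2(n-j)-S_2(n) = \nu_2\binom{n}{j}.
\]

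I do not expect any real obstacle here; the only point needing a word of care is consistency in the degenerate range. If $j<0$ or $j>n$, then $\binom{n}{j}=\binom{2n}{2j}=\binom{2n+1}{2j+1}=0$, so the asserted identity of $2$-adic valuations holds vacuously (all three are $+\infty$ under the convention $\nu_2(0)=+\infty$); and when $0\le j\le n$ every binomial coefficient above is a positive integer, so Kummer's theorem applies literally and the computation just displayed is valid. One could alternatively prove only the parity statement ($\nu_2=0$ or $\nu_2>0$) via Lucas's theorem, but Kummer's theorem is the natural tool since it pins down the exact valuation.
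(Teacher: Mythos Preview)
Your proof is correct and is exactly the intended derivation: the paper states this corollary without proof, as an immediate consequence of Kummer's theorem, and your argument via the identities $S_2(2m)=S_2(m)$ and $S_2(2m+1)=S_2(m)+1$ is precisely how one fills in that step. Your handling of the degenerate range $j<0$ or $j>n$ is also appropriate.
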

\begin{corollary}\label{cor:corKummer2}
For $n,j\in2\bZ$ we have 
\[
\nu_2\binom{n}{j} = \nu_2\binom{n+1}{j}.
\]
\end{corollary}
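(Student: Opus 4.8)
The plan is to apply Kummer's theorem directly with $p=2$. Recall that, for $0 \le b \le a$, it gives
\[
\nu_2\binom{a}{b} = S_2(b) + S_2(a-b) - S_2(a),
\]
i.e. the $2$-adic valuation of $\binom{a}{b}$ equals the number of carries produced when adding $b$ and $a-b$ in base $2$. So the goal is simply to compare
\[
\nu_2\binom{n}{j} = S_2(j) + S_2(n-j) - S_2(n)
\qquad\text{with}\qquad
\nu_2\binom{n+1}{j} = S_2(j) + S_2(n+1-j) - S_2(n+1).
\]

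The one elementary fact I would record first is that for an even integer $m$, the last binary digit of $m$ is $0$, so $S_2(m+1) = S_2(m) + 1$. I would then apply this twice using the hypotheses $n, j \in 2\bZ$: since $n$ is even, $S_2(n+1) = S_2(n) + 1$; and since $n-j$ is even (both $n$ and $j$ being even), we have $n+1-j = (n-j)+1$ and hence $S_2(n+1-j) = S_2(n-j) + 1$. The term $S_2(j)$ is common to the two expressions above, so substituting these two identities makes the two $+1$ contributions cancel, leaving $\nu_2\binom{n+1}{j} = S_2(j) + S_2(n-j) - S_2(n) = \nu_2\binom{n}{j}$.

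The computation is routine, so there is no real obstacle; the only bookkeeping point is the degenerate range. We may assume $0 \le j \le n$, since otherwise both binomial coefficients vanish: if $j < 0$ this is immediate, and if $j > n$ then $\binom{n}{j} = 0$ while, $j$ being even and $n+1$ odd, we must have $j \ge n+2$, so $\binom{n+1}{j} = 0$ as well. (One could instead derive the statement from Corollary~\ref{cor:corKummer1} together with Pascal's rule, but the direct argument via Kummer's theorem is the shortest.)
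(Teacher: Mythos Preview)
Your proof is correct and is exactly the direct verification via Kummer's theorem that the paper has in mind (the paper states the result as an unproved corollary of Kummer's theorem). Your handling of the degenerate range $j>n$ using the parity constraint $j\neq n+1$ is also fine.
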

\begin{corollary}\label{cor:corKummer3}
    For $j\in\bZ_{>0}$, we have $\nu_2\binom{2j}{j}\geq1$, and the equality holds if and only if $j= 2^m$, $m\in\bZ_{>0}$.
\end{corollary}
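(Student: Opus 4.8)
The plan is to reduce everything to a single application of Kummer's theorem to $\binom{2j}{j}$ with $p=2$. Taking $n = 2j$ and $m = j$, and using $2j - j = j$, Kummer's theorem gives
\[
\nu_2\binom{2j}{j} = \frac{S_2(j) + S_2(j) - S_2(2j)}{2-1} = 2\,S_2(j) - S_2(2j).
\]
The only elementary input needed is that the binary expansion of $2j$ is obtained from that of $j$ by appending a zero, so $S_2(2j) = S_2(j)$. Substituting yields the clean identity $\nu_2\binom{2j}{j} = S_2(j)$, the number of nonzero digits in the base-$2$ expansion of $j$.

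Granting this identity, the corollary is immediate. For every integer $j \geq 1$ the binary expansion of $j$ has at least one nonzero digit, so $S_2(j) \geq 1$ and hence $\nu_2\binom{2j}{j} \geq 1$. Equality holds precisely when $S_2(j) = 1$, i.e.\ when the binary expansion of $j$ is a single $1$, which is to say $j = 2^m$ for a non-negative integer $m$; this is the asserted equality case (note $j = 1 = 2^0$ gives $\binom{2}{1} = 2$, consistent with the formula).

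I expect no real obstacle here: the entire content is the observation $S_2(2j) = S_2(j)$ together with the fact that powers of $2$ are exactly the positive integers with binary digit sum $1$. As a cross-check one can instead argue inductively from Corollary~\ref{cor:corKummer1}: if $j$ is even, writing $j = 2j'$ gives $\nu_2\binom{2j}{j} = \nu_2\binom{4j'}{2j'} = \nu_2\binom{2j'}{j'}$, reducing to the case $j$ odd, where adding $j$ to itself in base $2$ visibly carries out of the units place, with additional carries exactly at the remaining nonzero digits of $j$; this recovers $\nu_2\binom{2j}{j} = S_2(j)$ and the same conclusion.
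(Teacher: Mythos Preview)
Your proof is correct and is precisely the intended direct application of Kummer's theorem; the paper states this corollary without proof, so there is nothing further to compare. Your parenthetical about $j=1=2^0$ is apt: as your identity $\nu_2\binom{2j}{j}=S_2(j)$ shows, equality also holds at $j=1$, so the condition $m\in\bZ_{>0}$ in the statement should really read $m\in\bZ_{\geq 0}$.
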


\section{Quadratically enriched binomial coefficients over a field}\label{Section:Quadratically_enriched_binomial_coefficients}

The paper \cite{Brugalle-WickelgrenABQ} defined quadratically enriched (twisted) binomial coefficients over a base scheme. We recall the definition in the case where the base scheme is a field.

Let $k$ be a field, and fix a separable closure $k^s$ of $k$. For a finite separable extension $k \subseteq L$, let $\Emb_k(L,k^s)$ denote the set of ring homomorphisms of $L$ into $k^s$ over $k$ 
\[
\Emb_k(L,k^s) = \left\{ f : \begin{tikzcd}
L \arrow[rr, "f"] && k^s  \\
&k \arrow[lu, ""] \arrow[ru, ""] 
\end{tikzcd} \right\}.
\] 
Here it is not necessary to assume that $L$ is a field; the same definition will carry over verbatim. But note that in this case $f$ is not injective. We keep the notation $\Emb$ because when $L$ is a field, $\Emb_k(L,k^s)$ the set of embeddings of $L$ into the separable closure.

Recall that a finite \'etale $k$-algebra $k \to E$ has an associated trace map
\[
\Tr_{E/k}: E \to k
\] which takes an element $e$ to the trace of the matrix associated to multiplication by $e$, viewed as an endomorphism of the finite dimensional $k$-vector space $E$. This trace map determines an element of $\GW(k)$ denoted $\Tr_{E/k} \langle 1 \rangle$ and defined to be the class of the bilinear form
\[
E \times E \to E \to k
\] where the first map is multiplication on $E$ and the second is $\Tr_{E/k}$.

Recall that the Galois correspondence
\[
E \mapsto \Emb_k(E,k^s)
\] gives an equivalence of categories from finite \'etale $k$-algebras to finite sets equipped with a $\Gal(k^s/k)$-action. Under this correspondence, a field $F$ corresponds to a set with a transitive action. A set with a transitive action can be encoded by the stabilizer $H \leq \Gal(k^s/k)$ of any element, and $F$ is recovered as the fixed field $F \cong (k^s)^H$ of the stabilizer.

To enrich binomial coefficients, we introduce the following notation. Let $\Emb^j_k(L,k^s)$ denote the set of subsets of size $j$ of $\Emb_k(L,k^s)$. So $\Emb^j_k(L,k^s)$ is the subset of the power set of $\Emb_k(L,k^s)$ consisting of those subsets of size exactly $j$. This set is equipped with a $\Gal(k^s/k)$-action coming from the action on $\Emb_k(L,k^s)$. Let $E^j_k(L,k^s)$ denote the finite \'etale $k$-algebra  that is associated to $\Emb^j_k(L,k^s)$ under the Galois correspondence.

\begin{definition}\label{df:binomialcoef}\cite[Definition 4.1]{Brugalle-WickelgrenABQ}
For a finite \'etale $k$-algebra $L$, let $$\binom{ L/k}{j} \in \GW(k)$$ denote the class of the trace form of $E^j_k(L,k^s)$. 

Here, $E^j_k(L,k^s)$ corresponds to the finite set $\Emb^j_k(L,k^s)$ with $\Gal(k^s/k)$-action induced from the canonical action on $\Emb_k(L,k^s)$.
\end{definition}

For $j=2,3$, these forms appeared in \cite[30.12-30.14]{Garibaldi-Serre-Merkurjev}.

\begin{remark}\label{rem:L/kchoosej-computation}
    By definition ${ L/k \choose j} = \Tr_{E^j_k(L,k^s)/k} \langle 1 \rangle$. We have $\dim_k E^j_k(L,k^s) = |\Emb^j_k(L,k^s)| =\binom{n}{j}$. For a group $G$ acting on a set $S$, let $\Orb(G,S)$ denote the set of orbits. The set $\Emb_k^j(L,k^s)$ can be decomposed into Galois orbits $\Emb_k^j(L,k^s) = \amalg_{i\in I} \mathfrak O_i$  with $$I=\Orb(\Gal(k^s/k), \Emb_k^j(L,k^s))$$ being a finite set. Let $H_i\leq \Gal(k^s/k)$ be the stabilizer subgroup of $\mathfrak O_i$. Then by the Galois correspondence $E^j_k(L,k^s) = \prod_{i\in I}(k^s)^{H_i}$, and 
\begin{equation}\label{eq:L/kchoosejfromstab}
{ L/k \choose j} = \Tr_{E^j_k(L,k^s)/k}\braket{1} = \sum_{i\in I}\Tr_{(k^s)^{H_i}/k}\braket{1}.
\end{equation} Note that any element of $\Gal(k^s/k)$ which stabilizes every element of $\Emb_k(L,k^s)$  also stabilizes every element of   $\Emb_k^j(L,k^s)$. It follows that $(k^s)^{H_i}$ embeds as a subfield of $L$.
\end{remark}

\begin{example}
${ L/k \choose 1} = { L/k \choose [L:k]-1} = \Tr_{L/k} \braket{1}$. For example, suppose $d \in \mathbb{Q}^*$ is not a square. Then $ {\mathbb{Q}[\sqrt{d}]/\mathbb{Q}\choose 1} = \braket{2} + \braket{2d}$, where $\braket{d}$ denotes the class of the bilinear form $\mathbb{Q} \times \mathbb{Q} \to \mathbb{Q}$ given $(x,y) \mapsto dxy$. For more on trace forms, see \cite{Conner-Perlis}.
\end{example}

In \cite{Brugalle-WickelgrenABQ}, it was important to twist such quadratically enriched binomial coefficients by a degree $2$-field extension $k \subset Q$ as follows. There is a canonical isomorphism $\Gal(Q/k) \cong C_2$, where $C_2$ denotes the cyclic group of order $2$. Let 
$$q_Q: \Gal(k^s/k) \to \Gal(Q/k) \cong C_2$$ denote the corresponding quotient map.

For a set $S$ of size $2j$, the set of subsets of $S$ of size $j$ has an action of $C_2$ given by taking a subset to its complement. If a group $G$ acts on $S$, the set of subsets of size $j$ inherits an action of $G \times C_2$ because taking a set to its complement commutes with any automorphism of $S$.

\begin{definition}\label{df:twistedbincoef}\cite[Definition 4.3]{Brugalle-WickelgrenABQ}
Let $L$ be a finite \'etale $k$-algebra and $j$ such that $\dim_k L=2j$. Define $${ L[Q]/k \choose j} \in \GW(k)$$ to be the class of the trace form of the \'etale $k$-algebra corresponding to $\Emb^j_k(L,k^s)$ with the $\Gal(k^s/k)$-action given by the homomorphism $$\Gal(k^s/k)\stackrel{(1,q_Q)}{\to} \Gal(k^s/k) \times C_2$$ and the canonical action of $\Gal(k^s/k) \times C_2$ on $\Emb^j_k(L,k^s)$.
\end{definition} 

\begin{example}
Suppose $a,d \in \mathbb{Q}^*$ are relatively prime non-squares. Let $L=\mathbb{Q}[\sqrt{d}]$ and $Q=\mathbb{Q}[\sqrt{a}]$. Then $ {L[Q]/ \mathbb{Q}\choose 1} = \braket{2} + \braket{2da}$.
\end{example}

\section{Untwisted case - Proof of Theorem \ref{thm:closedformula-1}}

Let $k =  \bF_q$ be a finite field of odd characteristic. We have
\[
\GW(\bF_q) \cong \frac{\bZ[u]}{(u^2-1, 2-2u)} \cong \frac{\bZ\cdot 1\oplus\bZ\cdot u}{(2-2u)} \cong\bZ\times \bF_q^*/(\bF^*_q)^2,
\]
where the first isomorphism is an isomorphism of rings, the second and third only respect group structures, and the third isomorphism is the product of the rank and the discriminant. Note that $\bF_q^*/(\bF^*_q)^2\cong \bZ/2\bZ$. Here as above, $u$ denotes the class of the bilinear form $k \times k \to k$ given by $(x,y) \mapsto \mu xy$ where $\mu$ in $\bF_q$ is a non-square.  See for example \cite[Theorem 3.5, Corollary 3.6]{lam05}. In particular, the class of any element in $\GW(\bF_q)$ is determined by the rank and the discriminant. 

Let $L/k$ be a finite extension of degree $n$. Then $\Gal(L/k)\cong C_{n}$, where $C_n$ is the cyclic group of order $n$ that is generated by the Frobenius automorphism $\varphi:x\to x^q$. See for example \cite[Section 14.3]{dummit2004abstract}. It is a classical fact\footnote{ A proof can be found in Lemma 58 of the first ArXiv version of \cite{cubicsurface}.} that for $[L:k] = n$, the class of the trace form of $L$ over $k$ in $\GW(k)$ is given:
\begin{equation}\label{eqn:tracecompfield}
\Tr_{L/k}\braket{1} = \epsilon(n):=\left\{\begin{array}{cc}
   n-1 + u,  & n\equiv 0\mod 2  \\
    n, & n\equiv 1\mod2
\end{array}
\right.    
\end{equation}
Indeed, the rank of $\Tr_{L/k}\braket{1}$ is $n$, so it suffices to compute the discriminant of $\Tr_{L/k}\braket{1}$. The discriminant of the trace form is the discriminant of the minimal polynomial of a generator. Since the Fr\"obenius acts as an $n$ cycle on the roots of a minimal polynomial for a generator of $k \subseteq L$, we have that $\Tr_{L/k}\braket{1}$ has square discriminant if and only if an $n$ cycle has even sign, which occurs if and only if $n$ is odd.

As in Remark~\ref{rem:L/kchoosej-computation}, let $$I=\Orb(\Gal(k^s/k), \Emb_k^j(L,k^s))$$ denote the set of orbits of the action of $\Gal(k^s/k)$ on $\Emb_k^j(L,k^s)$, and let $$\Emb_k^j(L,k^s) = \amalg_{i\in I} \mathfrak O_i$$ denote the decomposition of $\Emb_k^j(L,k^s)$ into orbits. Letting $d_i = \vert \mathfrak O_i \vert$ denote the size of the $i$th orbit, we see that 
\[
{ L/k \choose j} = \sum_{i \in I} \epsilon(d_i)
\] by Equation~\eqref{eq:L/kchoosejfromstab}.

 For a group $G$ acting on a set $S$, let $\Orb_{even}(G,S) \subseteq \Orb(G,S)$ denote the subset consisting of those orbits with an even cardinality. Define $\Orb_{odd}(G,S) $ similarly.

The set $\Orb (\Gal(k^s/k), \Emb_k^j(L,k^s) )$ decomposes
\[
\Orb(\Gal(k^s/k),\Emb_k^j(L,k^s))=GO_{odd}(L/k,j)\amalg GO_{even}(L/k,j),\]
where we use the abbreviations
\begin{align*}
GO_{odd}(L/k,j):=&\Orb_{odd} (\Gal(k^s/k), \Emb_k^j(L,k^s) ),\\
GO_{even}(L/k,j):=&\Orb_{even} (\Gal(k^s/k), \Emb_k^j(L,k^s) ).
\end{align*}

Let $\Delta(n,j)$ denote the difference $\Delta(n,j):=\binom{L/k}{j}-\binom{n}{j}$. The following lemma follows immediately from \eqref{eqn:tracecompfield}.
\begin{lemma}\label{lem:deltaGoeven}
$\Delta(n,j) = (u-1)\cdot |GO_{even}(L/k,j)|$.   
\end{lemma}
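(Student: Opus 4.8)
The plan is to read the identity directly off the orbit decomposition of the trace form already recorded above. Recall that $\binom{L/k}{j} = \Tr_{E^j_k(L,k^s)/k}\langle 1\rangle$, and that writing $\Emb_k^j(L,k^s) = \amalg_{i\in I}\mathfrak O_i$ for its decomposition into $\Gal(k^s/k)$-orbits gives $E^j_k(L,k^s) = \prod_{i\in I}(k^s)^{H_i}$ with $d_i = |\mathfrak O_i| = [\Gal(k^s/k):H_i]$. Additivity of the trace form across the product, together with \eqref{eqn:tracecompfield} applied to each field factor $(k^s)^{H_i}$ (which is the degree-$d_i$ extension of $k$), yields $\binom{L/k}{j} = \sum_{i\in I}\epsilon(d_i)$ in $\GW(k)$, exactly as displayed before the statement.

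Next I would compute $\binom{n}{j}$ on the nose. As a cardinality, $\binom{n}{j} = |\Emb_k^j(L,k^s)| = \sum_{i\in I} d_i$, so as an element of $\GW(k)$ the integer $\binom{n}{j}$ is the class $\sum_{i\in I} d_i\langle 1\rangle$. Subtracting,
\[
\Delta(n,j) = \binom{L/k}{j} - \binom{n}{j} = \sum_{i\in I}\bigl(\epsilon(d_i) - d_i\bigr)\quad\text{in }\GW(k).
\]
Now invoke \eqref{eqn:tracecompfield}: if $d_i$ is odd then $\epsilon(d_i) = d_i$ and the $i$-th term vanishes, while if $d_i$ is even then $\epsilon(d_i) - d_i = (d_i - 1 + u) - d_i = u - 1$. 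Hence only the even orbits contribute, each contributing $u-1$, so
\[
\Delta(n,j) = (u-1)\cdot\bigl|\{i\in I: d_i\text{ even}\}\bigr| = (u-1)\cdot|GO_{even}(L/k,j)|,
\]
as claimed.

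There is essentially no obstacle here beyond bookkeeping; the only point needing a moment of care is the identification of the integer $\binom{n}{j}\in\bZ$ with the corresponding multiple of $\langle 1\rangle$ in $\GW(k)$, and the use of additivity of $\Tr_{-/k}\langle 1\rangle$ over the product $\prod_{i\in I}(k^s)^{H_i}$, both of which are in place above. It is worth flagging for later use that $2(u-1) = 0$ in $\GW(\bF_q)$, so the right-hand side depends only on the parity of $|GO_{even}(L/k,j)|$; this is precisely what reduces the computation of the enriched binomial coefficients to parity counts of even orbits in the subsequent sections.
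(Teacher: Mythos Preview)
Your proof is correct and is precisely the argument the paper has in mind: the paper only says the lemma ``follows immediately from \eqref{eqn:tracecompfield},'' and your write-up unpacks exactly that, using the orbit decomposition $\binom{L/k}{j}=\sum_{i\in I}\epsilon(d_i)$ and the cardinality identity $\binom{n}{j}=\sum_{i\in I}d_i$ to see that each even orbit contributes $u-1$ and each odd orbit contributes $0$.
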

Therefore, to determine the value of $\Delta(n,j)$, one only needs to determine the mod 2 residue class of $|GO_{even}(L/k,j)|$.

\begin{proposition}[Theorem \ref{thm:closedformula-1}: case for $n$ odd]\label{prop:case1}
  $\binom{L/k}{j} = \binom{n}{j}$ for $n$ odd.
\end{proposition}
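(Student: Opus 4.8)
By Lemma~\ref{lem:deltaGoeven} we have $\Delta(n,j) = (u-1)\cdot |GO_{even}(L/k,j)|$, so the statement $\binom{L/k}{j} = \binom{n}{j}$ is equivalent to the assertion that $|GO_{even}(L/k,j)|$ is even; in fact I expect to prove the stronger statement that $GO_{even}(L/k,j)$ is \emph{empty} when $n$ is odd. The plan is therefore to analyze the orbit sizes appearing in the decomposition $\Emb_k^j(L,k^s) = \amalg_{i \in I}\mathfrak O_i$. First I would recall that, since $L/k$ is the degree-$n$ extension of a finite field, $\Gal(L/k) \cong C_n$ is generated by Frobenius, and the $\Gal(k^s/k)$-action on $\Emb_k(L,k^s)$ factors through $\Gal(L/k) = C_n$; identifying $\Emb_k(L,k^s)$ with a set of $n$ elements on which $C_n$ acts by a single $n$-cycle, the set $\Emb_k^j(L,k^s)$ becomes the set of $j$-element subsets with the induced $C_n$-action.

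The key (and essentially only) observation is then the orbit–stabilizer theorem: each orbit $\mathfrak O_i$ has cardinality $d_i = [C_n : H_i']$ dividing $n$, where $H_i'$ is the image of $H_i$ in $C_n$. When $n$ is odd, every divisor of $n$ is odd, so every $d_i$ is odd; hence no orbit has even cardinality, i.e. $GO_{even}(L/k,j) = \varnothing$ and $|GO_{even}(L/k,j)| = 0$. Plugging into Lemma~\ref{lem:deltaGoeven} gives $\Delta(n,j) = (u-1)\cdot 0 = 0$ in $\GW(\bF_q)$, which is exactly $\binom{L/k}{j} = \binom{n}{j}$.

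There is no real obstacle here: the argument is just the remark that subsets of a cyclic group of odd order cannot have even-size orbits under rotation, combined with Lemma~\ref{lem:deltaGoeven}. The only point requiring a word of care is the reduction of the profinite Galois action to the finite cyclic action on $\Emb_k(L,k^s)$, which is standard for finite fields and already implicit in the discussion preceding \eqref{eqn:tracecompfield}.
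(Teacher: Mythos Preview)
Your argument is correct and is essentially the same as the paper's: both show that every orbit size $d_i$ divides $n$ (you via orbit--stabilizer after factoring through $C_n$, the paper via the observation that $(k^s)^{H_i}$ is a subfield of $L$), hence $GO_{even}(L/k,j)=\varnothing$ when $n$ is odd, and Lemma~\ref{lem:deltaGoeven} gives $\Delta(n,j)=0$.
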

\begin{proof}
    As in Remark~\ref{rem:L/kchoosej-computation}, $(k^s)^{H_i}$ is a subfield of $L$. Thus, we have that $d_i|n$. Thus, if $n$ is odd, $GO_{even}(L/k,j)=\emptyset$.
\end{proof}

\subsection{Necklace interpretation}\label{subsection:necklace_interpretation} Let $\Neck(n,j)$ denote the set of necklaces consisting of $j$ blue beads and $n-j$ red beads, with one bead designated as the bead at the top of the necklace. One could alternately view this set as the set of necklaces with $j$ blue beads and $n-j$ red beads, lying on the plane, beads equally spaced on the unit circle with the top bead at $(0,1)$. It will be useful to consider the natural action of the dihedral group $D_n = C_n \rtimes C_2 = \langle r, f : r^n = 1, f^2 =1, rfr = f \rangle$ on $\Neck(n,j)$ in which $r$ acts by rotating the necklace one bead counterclockwise and $f$ acts by flipping the necklace over the vertical line through the top bead.

As above, let $k \subseteq L$ be the degree $n$ field extension $\bF_q \subseteq \bF_{q^n}$. Fix an embedding $p \in \Emb_k(L,k^s)$. Then 
\[
\Emb_k(L,k^s) = \{p, \varphi p, \varphi^2 p\,\ldots, \varphi^{n-1} p\},
\] where $\varphi$ denotes the Fr\"obenius $\varphi \colon k^s \to k^s$, $\varphi (x) = x^q$ of $k$. It follows that if we identify $\Emb^j_k(L,k^s)$ with the set $\Neck(n,j)$, then the action of $\Gal(L/k)\cong C_n$ on $\Emb^j_k(L,k^s)$ is identified with the induced $C_n $-action  on $\Neck(n,j)$. The induced $C_n$-action comes from $C_n\to D_n$ which sends the generator to $r\in D_n$. In other words, we have constructed an isomorphism of $C_n$-sets
\[
\Emb^j_k(L,k^s) \cong \Neck(n,j).
\] In particular, $GO_{odd}(L/k,j)$ can be identified with the subset of orbits of $\Neck(n,j)$ under the rotation action of $C_n = \langle r : r^n =1 \rangle$ consisting of those orbits with odd cardinality. A similar statement holds for $GO_{even}(L/k,j)$ as well, giving a canonical bijection

\[
GO_{even}(L/k,j) \cong \Orb_{even}(C_n, \Neck(n,j)).
\] Moreover, Lemma~\ref{lem:deltaGoeven} says that 
\[
{L/k \choose j} = {n \choose j} + (u - 1) \vert \Orb_{even}(C_n, \Neck(n,j))\vert.
\]

\subsection{M\"obius inversion}
Let $N(n,j)$ denote the cardinality of the set of necklaces in $\Neck(n,j)$ whose stabilizer under the $C_n$-action by rotation is trivial
\[
N(n,j) := |\big\{l\in\Neck(n,j):\mathbf{Stab}_{C_n} (l)= \{1\}\big\}|.
\] By the necklace interpretation of Section~\ref{subsection:necklace_interpretation}, we have
$$N(n,j)=|\big\{l\in\Emb^j_k(L,k^s):\mathbf{Stab}_{\Gal(L/k)} (l) = \{1\}\big\}|.$$ Note that that the numbers $N(n,j)$ and $|\Emb^j_k(L,k^s)| = \binom{n}{j}$ only depend on $n$ and $j$. 

Let $I=\Orb(C_n, \Neck(n,j))$ index the orbits of the action of $C_n$ on $\Neck(n,j)$ and, for $i$ in $I$, let $d_i$ denote the cardinality of the corresponding orbit. Let $\rho:=\frac{j}{n}$ denote the fraction of beads which are blue. Given an orbit $i \in I$ of cardinality $d_i$, we can take $d_i$ adjacent beads in the necklace and form a new necklace with $d_i$ beads, $\rho d_i$ of which are blue, and which has a trivial stabilizer under the rotation action of $C_{d_i}$. This process can be run in reverse, creating a necklace with $n$ beads from one with $d$ beads for $d|n$. It follows that 
\[
|\{i\in I,d_i = d\}|= N(d,\rho d).
\]

By the necklace interpretation, $I$ is in canonical bijection with an indexing set for the orbits of the $\Gal(L/k)$ action on $\Emb^j_k(L,k^s)$. Combining with the above, we have
\[
\binom{n}{j} = |\Emb^j_k(L,k^s)|=\sum_{d|n}d~|\{i\in I,d_i = d\}| =  \sum_{d|n}d~N(d,\rho d),\quad \rho:=\frac{j}{n},
\]
under the convention $N(d,\rho d) = 0$ if $\rho d\notin \bZ$. 

We can now apply the M\"obius inversion formula, which yields

\[
|N(d,b)| = \frac{1}{d}\sum_{j|d}\mu(j)\binom{\frac{d}{j}}{\frac{b}{j}},
\]
where $\mu(j)$ is the M\"obius function recalled in Equation~\eqref {eq:mu_def_Mobius_inversion}.

By definition, $|GO_{even}(L/k,j)|$ is the total number of orbits of even size, thus
\begin{equation}\label{eqn:sizeGaloisorbit}
|GO_{even}(L/k,j)| = \sum_{d|n,2|d}\frac{1}{d}\sum_{j|d}\mu(j)\binom{\frac{d}{j}}{\frac{\rho d}{j}}.    
\end{equation}

\subsection{Proof of Theorem \ref{thm:closedformula-1}\label{subsection:nevenjodd}
when $\nu_2(n)\geq1$ and $\nu_2(j)=0$ }

Define a partial order $\prec$ on $\bQ_{\geq0}$ as follows. 

\begin{definition}\label{def:prec}
For $x,y\in \bQ_{\geq0}$ written as 2-adic numbers $x = \sum_{i} x_i\cdot 2^i$, $y = \sum_{i} y_i\cdot 2^i$, then $x\prec y$ if and only if $x,y\in\bZ_{\geq 0}$ and $x_i\leq y_i$ for all $i$. 
\end{definition}

\begin{proposition}\label{prop:case2}
For $\nu_2(n)\geq1$ and $\nu_2(j)=0$ we have
 \[
\Delta(n,j) = \left\{
\begin{array}{cc}
     -1+u, & \frac{j-1}{2}\prec \frac{n-2}{2}  \\
     0, & \mathrm{else}.
\end{array}
\right.
\]
\end{proposition}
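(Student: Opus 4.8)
## Proof Plan

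The plan is to compute $|GO_{even}(L/k,j)| \bmod 2$ using the M\"obius-inversion formula \eqref{eqn:sizeGaloisorbit} together with Lucas's theorem, under the standing hypotheses $\nu_2(n)\geq 1$ and $\nu_2(j)=0$. Write $n = 2^a m$ with $a = \nu_2(n)\geq 1$ and $m$ odd, and recall $\rho = j/n$. Since $j$ is odd, for a divisor $d\mid n$ the quantity $\rho d = jd/n$ is an integer only when $n/d$ divides $j$; as $j$ is odd this forces $n/d$ to be odd, i.e. $d$ must be divisible by $2^a$. So in the outer sum of \eqref{eqn:sizeGaloisorbit} only the divisors $d$ with $2^a\mid d$ contribute, and for each such $d$ we have $\rho d = j\cdot(d/n)$ with $d/n = 1/(\text{odd})$.

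The main computation is then to evaluate, modulo $2$, the double sum
\[
|GO_{even}(L/k,j)| = \sum_{\substack{d\mid n \\ 2\mid d}} \frac{1}{d}\sum_{\substack{e\mid d}}\mu(e)\binom{d/e}{\rho d/e}.
\]
Here I would swap the role of the inner M\"obius sum: by the relation established before \eqref{eqn:sizeGaloisorbit}, the inner sum equals $d\cdot N(d,\rho d)$ where $N(d,\rho d)$ counts aperiodic necklaces, so $|GO_{even}| = \sum_{d} N(d,\rho d)$ over even $d\mid n$ with $2^a\mid d$. To get the parity I would instead return to the triangular relation $\binom{n}{j} = \sum_{d\mid n} d\, N(d,\rho d)$ and isolate the contribution of the even $d$'s. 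Splitting $\binom{n}{j}$ according to whether $d$ is divisible by $2^a$ or not: the $d$ with $2^a\nmid d$ contribute $0$ (since $\rho d\notin\bZ$), so $\binom{n}{j} = \sum_{2^a\mid d\mid n} d\,N(d,\rho d)$. Every such $d$ is even (as $a\geq 1$), so $\binom{n}{j} = \sum_{d\in\text{even divisors}} d\, N(d,\rho d)$ where all terms have $d\equiv 0\bmod 2$; reducing mod $2$ kills everything except... this shows $\binom{n}{j}$ is even, which we already know, so I need to be more careful and extract a $2$-adic valuation statement rather than a mere parity. The correct move is: among these $d$, exactly one — namely $d=n$ itself — satisfies $\nu_2(d) = a$ is smallest combined with... no: I would instead directly analyze $\sum_{2^a\mid d\mid n, d\neq n} N(d,\rho d)$ versus $N(n,j)$, using that $N(n,j)\equiv \frac1n\binom nj \cdot(\text{unit})$ is odd iff a divisibility condition holds. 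Concretely, by Lucas, $\binom{d/e}{\rho d/e}$ is odd iff the base-$2$ digits of $\rho d/e$ are dominated by those of $d/e$, and summing $\mu(e)$ over $e$ one finds $N(d,\rho d)$ is odd iff $d$ is squarefree-times-$2^a$ in a suitable sense. I expect the cleanest path is to show $|GO_{even}(L/k,j)| \equiv N(n_0, \rho n_0) \bmod 2$ for a single distinguished divisor $n_0$ and then apply Lucas to $N$ directly.

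To finish, I would translate the resulting parity condition into the statement $\frac{j-1}{2}\prec\frac{n-2}{2}$. Writing $n-2 = 2(\tfrac{n-2}{2})$ and $j-1 = 2(\tfrac{j-1}{2})$, the $2$-adic digits of $\tfrac{n-2}{2}$ and $\tfrac{j-1}{2}$ are the digits of $n-2$ and $j-1$ shifted down by one; since $j$ is odd, $j-1$ is even and $\tfrac{j-1}{2}\in\bZ$, and since $n$ is even, $n-2$ is even. By Corollary~\ref{cor:corKummer1} ($\nu_2\binom{2x}{2y} = \nu_2\binom xy$) applied to translate between $\binom{n-2}{j-1}$ and $\binom{(n-2)/2}{(j-1)/2}$, and by Lucas's theorem, the binomial coefficient $\binom{(n-2)/2}{(j-1)/2}$ is odd precisely when $\tfrac{j-1}{2}\prec\tfrac{n-2}{2}$. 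Matching this against the parity of $|GO_{even}(L/k,j)|$ computed above, and using Lemma~\ref{lem:deltaGoeven} which gives $\Delta(n,j) = (u-1)\cdot|GO_{even}(L/k,j)|$, yields exactly the claimed dichotomy: $\Delta(n,j) = -1+u$ when $\tfrac{j-1}{2}\prec\tfrac{n-2}{2}$ and $0$ otherwise.

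The main obstacle I anticipate is the bookkeeping in the double sum \eqref{eqn:sizeGaloisorbit}: correctly identifying which divisors $d\mid n$ contribute (the constraint that $\rho d\in\bZ$ interacts with $j$ being odd), and then showing that after the M\"obius inversion and reduction mod $2$ via Lucas, the entire sum collapses to the single binomial coefficient $\binom{(n-2)/2}{(j-1)/2}$ modulo $2$. The rest — converting the digit-domination condition from Lucas into the relation $\prec$ — is a routine application of Corollary~\ref{cor:corKummer1} and the definition of $\prec$.
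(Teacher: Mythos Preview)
Your overall framework is right --- start from \eqref{eqn:sizeGaloisorbit}, reduce mod~$2$ via Lucas, and translate the answer into the digit-domination condition $\tfrac{j-1}{2}\prec\tfrac{n-2}{2}$ --- and your final paragraph correctly handles the translation step. But the middle of the argument, the actual collapse of the double sum to a single binomial coefficient mod~$2$, is not carried out. You try two approaches (passing to $\sum_d N(d,\rho d)$, and isolating a distinguished $n_0$) and abandon both; the phrases ``I would instead directly analyze\dots'', ``I expect the cleanest path is\dots'', ``in a suitable sense'' are exactly where the proof needs to happen and doesn't.

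The paper fills this gap with a concrete trick you don't anticipate. Rewrite $\binom{d/l}{\rho d/l} = \tfrac{1}{\rho}\binom{d/l-1}{\rho d/l - 1}$ so that the whole expression becomes $\tfrac1j$ times an integer sum; since $j$ is odd this doesn't affect parity. Now each summand $\binom{d/l-1}{\rho d/l-1}$ satisfies $\binom{x-1}{y-1}\equiv\binom{2x-1}{2y-1}\pmod 2$ by Corollary~\ref{cor:corKummer1}, so its parity depends only on the \emph{ratio} $d/l$ (and $\rho$), not on $d$ and $l$ separately. One then counts, for each fixed ratio, how many pairs $(d,l)$ with $2^{\nu_2(n)}d\mid n$ and $l$ squarefree dividing $2^{\nu_2(n)}d$ realize it: almost all equivalence classes have even size (indexed by subsets of odd prime divisors), leaving only the classes of $(n,1)$ and $(n,2)$. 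The latter contributes $\binom{n/2-1}{j/2-1}=0$ since $j$ is odd, so the entire sum is $\equiv\binom{n-1}{j-1}\pmod 2$. Lucas then gives $j-1\prec n-1$, which for $n$ even and $j$ odd is equivalent to $\tfrac{j-1}{2}\prec\tfrac{n-2}{2}$. This equivalence-class counting is the missing idea in your plan.
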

\begin{proof}
As above, define $\rho = j/n$. From \eqref{eqn:sizeGaloisorbit}, we have
\begin{align*}
|GO_{even}(L/k,j)| &=  \sum_{d|n,2|d}\frac{1}{d}\sum_{l|d}\mu(l) \binom{\frac{d}{l}}{\frac{\rho d}{l}} .\\
&=  \sum_{d|n,2|d}\frac{1}{d}\sum_{l|d}\mu(l) \frac{1}{\rho} \binom{\frac{d}{l} -1}{\frac{\rho d}{l}-1} .\\
&= \frac{1}{j}\sum_{d|n,2|d}\frac{n}{d}\sum_{l|d} \mu(l)\binom{\frac{d}{l}-1}{\frac{\rho d}{l}-1}.
\end{align*}
By Lemma \ref{lem:deltaGoeven}, $\Delta(n,j)$ only depends on the mod 2 residue of $|GO_{even}(L/k,j)|$.
As $j$ is odd by our assumption,
\[
|GO_{even}(L/k,j)|\equiv \sum_{d|n, 2|d}\frac{n}{d}\sum_{l|d} \mu(l)\binom{\frac{d}{l}-1}{\frac{\rho d}{l}-1} \mod 2.
\]
Further, as $\mu(l)\equiv 1\mod2$ if and only if $l$ is square free,  we have
\begin{align}
\notag |GO_{even}(L/k,j)| & \equiv  \sum_{d|n,2|d}\frac{n}{d}\sum_{\substack{l | d \\ l \text{ square free}} }\binom{\frac{d}{l}-1}{\frac{\rho d}{l}-1} \\ \label{eq:GOevennevenjoddsumswpowers2} &\equiv\sum_{(2^{\nu_2(n)}d)|n}\sum_{\substack{l|(2^{\nu_2(n)}d)\\ l \text{ square free}} }\binom{\frac{2^{\nu_2(n)}d}{l}-1}{\frac{\rho 2^{\nu_2(n)}d}{l}-1}\mod 2.
\end{align}
Where in the second line we used the fact that only when $\frac{n}{d}$ is even, the summand has a nontrivial mod 2 contribution.
 By Corollary~\ref{cor:corKummer1}, we have mod $2$ equalities \[\binom{x-1}{y-1} \equiv \binom{2x-2}{2y-2}\equiv\binom{2x-1}{2y-1}\mod 2,\quad \forall x,y\in\bZ_{\geq0}.\] Thus, the mod 2 residue of each of the summands of \eqref{eq:GOevennevenjoddsumswpowers2} only depends on $\frac dl$ and $\frac{\rho d}{l}$.

Notice the sum in \eqref{eq:GOevennevenjoddsumswpowers2} is over the set $P$ of pairs $(d,l)$ with $2^{\nu_2(n)}d$ dividing $n$ and $l$ square free dividing $2^{\nu_2(n)}d$. Since the summand only depends on $\frac{d}{l}$, we introduce an equivalence relation on $P$ by declaring $(d,l) \sim (d',l')$ when $\frac{d}{l} = \frac{d'}{l'}$. Now, we need to identify the parity of cardinality of the $\sim$ classes, since those with even cardinality will not contribute to \eqref{eq:GOevennevenjoddsumswpowers2}. Suppose $(d,l)$ in $P$ with $l$ odd. Then we can divide both $l$ and $d$ by $l$, obtaining a new pair $(d',l')$ with the same ratio $\frac{d}{l} = \frac{d'}{l'}$ and $l'=1$. The number of pairs with this given ratio is then equal to $2^m$ where $m$ is the number of distinct odd prime factors of $n/(d')$. It follows that all these equivalence classes have even cardinality, except for the class of $(n,1)$. Now suppose $(d,l)$ in $P$ with $l $ even. Then we can divide both $l$ and $d$ by $l/2$, obtaining an equivalence pair $(d',2)$. The number of pairs equivalent to $(d',2)$ is then $2^m$ where $m$ is the number of odd prime factors of $n/d'$. Thus, all these equivalence classes have even cardinality, except for the class of $(n,2)$. However, for $(d,l)=(n,2)$, the binomial coefficient $\binom{\frac{2^{\nu_2(n)}d}{l}-1}{\frac{\rho 2^{\nu_2(n)}d}{l}-1} = \binom{\frac{n}{2}-1}{\frac{j}{2}-1}$, which is $0$ because $\frac{j}{2}$ is not an integer and by convention (see Theorem \ref{thm:lucastheorem}), binomial coefficients with fractional lower terms are $0$. Thus,

\[
|GO_{even}(L/k,j)| \equiv \binom{n-1}{j-1}\mod2.
\]
By Lucas's theorem (Theorem~\ref{thm:lucastheorem}), we have $|GO_{even}(L/k,j)| $ is odd if and only if $j-1 \prec n-1$. Finally, because $n$ is even and $j$ is odd, it follows that $j-1 \prec n-1$ if and only if $j-1 \prec n-2$. Then since both $j-1$ and $n-2$ are even, we have $j-1 \prec n-2$ if and only if $\frac{j-1}{2} \prec \frac{n-2}{2}$, which gives the desired formula.

\end{proof}

By Lemma \ref{lem:deltaGoeven}, Proposition \ref{prop:case2} implies Theorem \ref{thm:closedformula-1} for the case $\nu_2(n)\geq1$ and $\nu_2(j)=0$.

\subsection{Combinatorics of symmetric orbits} \label{subsection:comb1}

We will mainly work on the necklace interpretation for the rest of the paper. Note that $C_2$ generated by the flip $f$ acts on the set $\Orb(C_n, \mathrm{Neck}(n,j)$. For a set $S\subset \SN$, let $S^f$ denote the subset of $S$ that is fixed by the flipping action. If $l$ is a necklace with orbit $[l] \in \Orb(C_n, \mathrm{Neck}(n,j))^f$, then $fl= r^m l$ for some $m\in\bZ$. This implies that $l$ has an axis of symmetry: the axis rotated counterclockwise by $\frac{m\pi}{n}$ from the vertical axis. As we will see in a moment, a symmetry axis will allow us to decompose a necklace. Therefore, it will be important to consider orbits under rotation of pairs consisting of a necklace and a symmetry axis.  

Consider the 2-dimensional faithful representation $\Phi:D_n\hookrightarrow O(2,\bR)$,
\[
\Phi(r) = \left(\begin{array}{cc}
    \cos(\frac{2\pi }{n}) & -\sin(\frac{2\pi }{n}) \\
     \sin(\frac{2\pi }{n})& \cos(\frac{2\pi }{n})
\end{array}\right), \quad \Phi(f) = \left(\begin{array}{cc}
    -1 & 0 \\
     0 & 1
\end{array}\right).
\] Fix $ x\in \bR^2,x = (0,1)$, then $l\in\Neck(n,j)$ can be represented by a subset of size $j$ in $\Phi(D_n)\cdot x$. If $[l]\in \SN^f$, then for every representative $l\in \Neck(n,j)$ there exists $\sigma\in\bP^1(\bR)$ such that $l$ is invariant under the reflection $f_\sigma\in O(2,\bR)$ with respect to $\sigma$. Any such linear space $\sigma$ is called a symmetry axis of $l$.
A symmetry axis $[(l,\sigma)]$ for $[l]\in \SN^f$ is the $C_n$-orbit of the pair $(l,\sigma)$. 

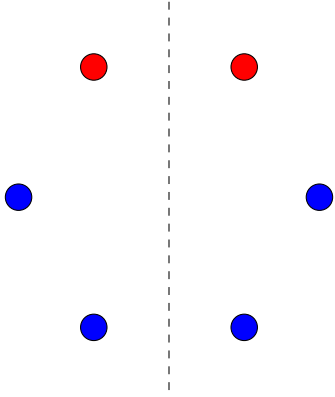
\begin{figure}
\centering
\begin{tikzpicture}

    \def\radius{2cm}

    \def\numBeads{6}

    \foreach \i in {1,...,\numBeads} {
       
        \pgfmathsetmacro{\angle}{360/\numBeads * (\i+1)}

        \ifnum\i=2
            \node[draw, fill=blue, circle, minimum size=10pt] at (\angle:\radius) {};
        \else
            \ifnum\i=3
                \node[draw, fill=blue, circle, minimum size=10pt] at (\angle:\radius) {};
            \else
                \ifnum\i=4
                    \node[draw, fill=blue, circle, minimum size=10pt] at (\angle:\radius) {};
                \else
                    \ifnum\i=5
                        \node[draw, fill=blue, circle, minimum size=10pt] at (\angle:\radius) {};
                    \else
                        \node[draw, fill=red, circle, minimum size=10pt] at (\angle:\radius) {};
                    \fi
                \fi
            \fi
        \fi
    }

    \draw[dashed] (0,1.3*\radius) -- (0,-1.3*\radius);

\end{tikzpicture}
\caption{An element in $\Orb(C_6,\Neck(6,4))^f$ with a unique symmetry axis.}
\end{figure}

\begin{definition}
    For any $[l]\in \SN$, define the period of $[l]$ as $\pi([l]):= |C_n\cdot l| $, which is well defined as $|C_n\cdot l|$ is independent from the choice of representative of $[l]$.
\end{definition}

\begin{definition}
For two symmetry axes  $[(l_1,\sigma_1)]$ and $[(l_2,\sigma_2)]$ of $[l]\in\SN^f$, define their distance as
    \[
    d([(l_1,\sigma_1)],[(l_2,\sigma_2)]) = \min\{|m|:r^m\cdot \sigma_1 =\sigma_2,(l_i,\sigma_i)\in [(l_i,\sigma_i)],i=1,2 \},
    \]
    where $m$ is allowed to be half-integers in the sense that $\Phi(r)^{1/2}$ is the counterclockwise rotation by $\frac{2 \pi i}{2n}$.    
\end{definition}
    
\begin{lemma}\label{lem:2axes}
    Let $[l]$  be in $\SN^f$. If $\frac{n}{\pi([l])}$ is odd, then there is a unique symmetry axis. Otherwise, there are two distinct symmetry axes $[(l_1,\sigma_1)]$ and $[(l_2,\sigma_2)]$ with $d([(l_1,\sigma_1)],[(l_2,\sigma_2)]) = \frac{\pi([l])}{2}$.
\end{lemma}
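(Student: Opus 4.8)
The plan is to analyze the set of symmetry axes of a fixed-point orbit $[l] \in \SN^f$ directly in terms of the reflections in the dihedral group acting on a necklace. Fix a representative $l \in \Neck(n,j)$, viewed as a subset of $\Phi(D_n)\cdot x$ of size $j$. The key observation is that $\sigma$ is a symmetry axis of $l$ exactly when the reflection $f_\sigma$ lies in the stabilizer of $l$ inside the full group $O(2,\bR)$, but more usefully inside $\Phi(D_n)$: writing $\mathbf{Stab}_{D_n}(l)$ for the stabilizer, this subgroup is either trivial/rotational or it contains a reflection. First I would show that if $\mathbf{Stab}_{D_n}(l)$ contains a reflection $f_\sigma$, and $[l] \in \SN^f$ so that $\mathbf{Stab}_{C_n}(l)$ has order $m := n/\pi([l])$ generated by $r^{\pi([l])}$, then the full stabilizer $\mathbf{Stab}_{D_n}(l)$ is the dihedral group of order $2m$ generated by $r^{\pi([l])}$ and $f_\sigma$. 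Its reflections are exactly $r^{k\pi([l])} f_\sigma$ for $k = 0, 1, \dots, m-1$, so there are precisely $m$ reflections in the stabilizer.

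Next I would translate this count of stabilizing reflections into a count of symmetry \emph{axes up to $C_n$-orbit}. Two stabilizing reflections $f_{\sigma}$ and $f_{\sigma'}$ give the same symmetry axis orbit $[(l,\sigma)] = [(l,\sigma')]$ precisely when some rotation $r^t$ carries the pair $(l,\sigma)$ to $(l,\sigma')$, i.e. $r^t \in \mathbf{Stab}_{C_n}$-coset structure identifies them. Concretely, the $C_n$-action on the set of $m$ stabilizing reflections $\{r^{k\pi([l])}f_\sigma\}$ is by conjugation composed with translation; one checks $r^t \cdot (r^{k\pi([l])}f_\sigma) \cdot r^{-t}$ lands back in the set iff $r^{2t}$ stabilizes, i.e. $2t \equiv 0 \pmod{\pi([l])}$. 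Thus the $C_n$-orbits on the $m$ reflections have size $\pi([l])/\gcd(\pi([l]),2) \cdot (\text{appropriate factor})$; sorting out the arithmetic, when $m$ is odd the $m$ reflections form a single orbit (one symmetry axis), and when $m$ is even they split into exactly two orbits (two symmetry axes). The distance computation then follows: representatives of the two axis-orbits differ by a rotation $r^{\pi([l])/2}$ (using the half-integer convention, this is rotation by $\pi([l])/2$ beads, which is $\frac{\pi([l])}{2}$ in the distance metric), and no smaller rotation identifies them, giving $d = \pi([l])/2$.

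I would organize the write-up as: (1) identify $\mathbf{Stab}_{D_n}(l)$ as a dihedral group $\langle r^{\pi([l])}, f_\sigma\rangle$ of order $2m$; (2) enumerate its $m$ reflections and observe each corresponds bijectively to a symmetry axis of the representative $l$; (3) compute the $C_n$-orbits on this set of $m$ reflections, getting $1$ orbit if $m$ odd and $2$ if $m$ even; (4) in the even case, pin down the distance between the two axis-orbits as $\pi([l])/2$ by exhibiting the connecting half-rotation and checking minimality. A small subtlety to flag: one must distinguish the two geometric types of reflection axis in $D_n$ (through a bead versus between beads) when $n$ is even — but since the representative $l$ is already fixed, the set of stabilizing reflections is what it is, and the count of $C_n$-orbits is insensitive to this.

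The main obstacle I anticipate is step (3): correctly computing the $C_n$-orbit structure on the set of stabilizing reflections, because the $C_n$-action on the pair $(l,\sigma)$ is a \emph{twisted} action — $r^t$ moves $\sigma$ to $r^t \sigma$, but the relevant equivalence is whether $r^t$ can be chosen to also fix $l$ after adjusting, so one is really computing orbits of $\mathbf{Stab}_{D_n}(l)$-cosets. The bookkeeping with the half-integer rotations and the parity of $m = n/\pi([l])$ must be done carefully to land on exactly the dichotomy "$1$ axis vs. $2$ axes" and the precise distance $\pi([l])/2$ rather than, say, $\pi([l])$ or $\pi([l])/4$. I would double-check this against the figure ($n=6$, orbit of size $\pi([l])$ with $n/\pi([l])$ odd giving a unique axis) as a sanity test.
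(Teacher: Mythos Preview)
Your approach is correct but takes a genuinely different route from the paper. The paper's proof is essentially a two-line argument based on the classical fact that the composition $f_{\sigma_2}\circ f_{\sigma_1}$ of two reflections is the rotation $r^{2d}$, where $d$ is the distance between the axes; since this rotation must lie in $\mathbf{Stab}_{C_n}(l)$, one gets $\pi([l])\mid 2d$, and combined with $0\le d<\pi([l])$ this forces $d\in\{0,\pi([l])/2\}$, with the nonzero value requiring $n/\pi([l])$ even. Your approach instead enumerates the full dihedral stabilizer $\mathbf{Stab}_{D_n}(l)$ of order $2m$ (with $m=n/\pi([l])$) and its $m$ reflections, then computes the orbits of the rotational stabilizer $\langle r^{\pi([l])}\rangle\cong C_m$ on the corresponding axes. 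The paper's trick is much shorter and yields the distance formula almost for free; your structural argument is longer but has the advantage of explicitly establishing \emph{existence} of the second axis when $m$ is even, which the paper's proof leaves implicit.

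One clarification for your step (3), which you correctly flagged as the delicate point: the relevant action is not conjugation by $r^t$ but simply the direct rotation action $\sigma\mapsto r^{\pi([l])}\sigma$ of the rotational stabilizer on the set of $m$ stabilizing axes $\{r^{k\pi([l])/2}\sigma:0\le k<m\}$ (these are the axes of the reflections $r^{k\pi([l])}f_\sigma$). On indices this is $k\mapsto k+2\pmod m$, which has exactly $\gcd(2,m)$ orbits --- one if $m$ is odd, two if $m$ is even --- and the two orbit representatives $\sigma$ and $r^{\pi([l])/2}\sigma$ immediately give the claimed distance. Organizing the computation this way dissolves the bookkeeping concern you raised.
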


\begin{proof}
 Let $[(l_1,\sigma_1)]$ and $[(l_2,\sigma_2)]$ be two symmetry axes of $[l]$.
    As the composition $f_{\sigma_2}\circ f_{\sigma_1}\in \Phi(C_n)$ is $r$ raised to the power of $2d([(l_1,\sigma_1)],[(l_2,\sigma_2)])$, we have \[\pi([l])~ |~2d([(l_1,\sigma_1)],[(l_2,\sigma_2)]).\] On the other hand, by definition $d([(l_1,\sigma_1)],[(l_2,\sigma_2)])<\pi([l])$. Thus, $d([(l_1,\sigma_1)],[(l_2,\sigma_2)]) = \frac{\pi([l])}{2}$ or $0$. Moreover, in the first case, we must have that $\frac{n}{\pi([l])}$ is even.
\end{proof}

For example, Figure \ref{fig:twosymmetryaxes} shows an element $[l]\in\Orb(C_6,\Neck(6,4))^f$ with two different symmetry axes. In this example, $\pi([l]) =3$, from which $\frac{6}{\pi([l])} = 2$, and the distance between the two axes of symmetry is $\frac{3}{2}$.

\begin{definition}
A symmetry axis $[(l,\sigma)]$ for $[l]\in \SN^f$ is called {\em type 1} if $\sigma$ does not intersect $\Phi(D_n)\cdot x$. It is called {\em type 2} if $\sigma$ intersects at least one element of $\Phi(D_n)\cdot x$.    
\end{definition}
For $[l]\in\SN^f$, if $n$ is odd, then the only symmetry axis of $[l]$  is of type 2. If $n$ is even, then $[l]$ has two symmetry axes, one for each type. As noted above, an example for $n$ is even shown in Figure \ref{fig:twosymmetryaxes}.

\begin{figure}
\centering
\begin{tikzpicture}
    \def\radius{2cm}

    \def\numBeads{6}

    \foreach \i in {1,...,\numBeads} {
        \pgfmathsetmacro{\angle}{360/\numBeads * (\i + 1)}
        
        \ifnum\i=1
            \node[draw, fill=blue, circle, minimum size=10pt] at (\angle:\radius) {};
        \else
            \ifnum\i=3
                \node[draw, fill=blue, circle, minimum size=10pt] at (\angle:\radius) {};
            \else
                \ifnum\i=4
                    \node[draw, fill=blue, circle, minimum size=10pt] at (\angle:\radius) {};
                \else
                    \ifnum\i=6
                        \node[draw, fill=blue, circle, minimum size=10pt] at (\angle:\radius) {};
                    \else
                        \node[draw, fill=red, circle, minimum size=10pt] at (\angle:\radius) {};
                    \fi
                \fi
            \fi
        \fi
    }

    \draw[dashed] (0,1.3*\radius) -- (0,-1.3*\radius);

    \draw (180:1.3*\radius) -- (0:1.3*\radius);
\end{tikzpicture}
\caption{An element in $\Orb(C_6,\Neck(6,4))^f$ with two symmetry axes. The dashed line is a symmetry axis of type 1 and the solid line is of type 2.}\label{fig:twosymmetryaxes}
\end{figure}
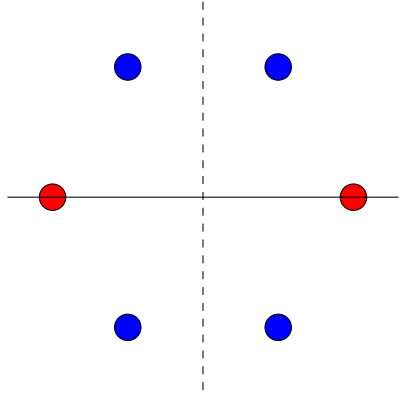

 For  $S\subset \SN^f$, denote $S_i$ as the subset of $S$ that has a symmetry axis of type $i$, for $i=1,2$. We have
   $\SN^{f} = \SN_1^{f}\bigcup \SN_2^{f}$. 

\begin{lemma}\label{lem:intersection}
    For $\nu_2(n)\geq 1$,
    \[\SN_1^{f}\cap \SN_2^{f} =  \SoN^{f}.\] In particular,
    \[
    \SeN_1^{f}\cap \SeN_2^{f}=\emptyset.
    \]
\end{lemma}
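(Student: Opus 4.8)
The plan is to analyze a necklace orbit $[l]\in\SN^f$ that admits \emph{both} a type 1 and a type 2 symmetry axis and show directly that its period forces $\frac{n}{\pi([l])}$ to be odd, hence (by Lemma~\ref{lem:2axes}) that the orbit has odd cardinality, so it lies in $\SoN^f$. Conversely, I would check that every odd-cardinality symmetric orbit has both types of axis. Since $\SeN^f$ is exactly the complement of $\SoN^f$ inside $\SN^f$, the two claims together give $\SeN_1^f\cap\SeN_2^f=\emptyset$, which is the second displayed assertion, and the first displayed assertion is just the reformulation using $\SN^f=\SN_1^f\cup\SN_2^f$.

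The key computation is with distances of axes. Suppose $[(l_1,\sigma_1)]$ is a type 1 axis and $[(l_2,\sigma_2)]$ a type 2 axis of the same $[l]$. If $\frac{n}{\pi([l])}$ were even, Lemma~\ref{lem:2axes} says there are exactly two symmetry axes at distance $\frac{\pi([l])}{2}$ apart, so $\{[(l_1,\sigma_1)],[(l_2,\sigma_2)]\}$ must be those two. A type 2 axis passes through a bead position, i.e. through a point of $\Phi(D_n)\cdot x$; a type 1 axis passes strictly between two adjacent bead positions. Rotating an axis by $r^m$ with $m$ a half-integer toggles between "through a bead" and "between two beads" precisely when $m$ is a genuine half-integer (odd numerator over $2$), and preserves the type when $m$ is an integer — this is immediate from the description of $\Phi(r)^{1/2}$ as rotation by $\frac{2\pi i}{2n}$ together with the fact that the $2n$ points $\Phi(D_{2n})\cdot x$ alternate bead/gap. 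So the distance between a type 1 and a type 2 axis is always a genuine half-integer, hence $\frac{\pi([l])}{2}$ is a half-integer, i.e. $\pi([l])$ is odd. But $\pi([l])\mid n$ and $\nu_2(n)\geq 1$, so $\frac{n}{\pi([l])}$ is even only if $\pi([l])$ can absorb the factor of $2$ — wait, more carefully: $\frac{n}{\pi([l])}$ even means $\nu_2(\pi([l]))<\nu_2(n)$, which is compatible with $\pi([l])$ odd. So I instead argue: if $\pi([l])$ is odd then $\frac{n}{\pi([l])}$ is even (as $\nu_2(n)\geq 1$), so Lemma~\ref{lem:2axes} gives exactly two axes at distance $\frac{\pi([l])}{2}$, a half-integer, consistent with one being type 1 and one type 2; and if $\pi([l])$ is even then the distance $\frac{\pi([l])}{2}$ is an integer, so both axes have the same type, contradicting the assumption that one is type 1 and one type 2. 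Therefore $[l]$ having both types forces $\pi([l])$ odd, hence $|C_n\cdot l|=\pi([l])$ is odd, i.e. $[l]\in\SoN^f$.

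For the reverse inclusion $\SoN^f\subseteq\SN_1^f\cap\SN_2^f$: if $[l]\in\SoN^f$ then $\pi([l])$ is odd, so $\frac{n}{\pi([l])}$ is even (using $\nu_2(n)\geq 1$), so by Lemma~\ref{lem:2axes} there are two distinct axes at distance $\frac{\pi([l])}{2}$, a genuine half-integer; by the alternation argument above, two axes a half-integer distance apart have opposite types, so one is type 1 and one is type 2, giving $[l]\in\SN_1^f\cap\SN_2^f$. Combining, $\SN_1^f\cap\SN_2^f=\SoN^f$, and intersecting with $\SeN^f$ (disjoint from $\SoN^f$) yields $\SeN_1^f\cap\SeN_2^f=\emptyset$. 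The main obstacle I anticipate is making the "toggle type under half-integer rotation" step fully rigorous: one must pin down precisely which reflection axes through the center of the circle pass through a point of $\Phi(D_n)\cdot x$ versus between two such points, and confirm that composing the reflection in a type 1 axis with a rotation $r^m$ lands on a type 2 axis exactly when $m\notin\bZ$, using that the stabilizer structure and the position of the distinguished point $x=(0,1)$ are compatible with $n$ beads equally spaced. This is elementary planar geometry of $D_n\hookrightarrow O(2,\bR)$, but it is the crux and deserves a careful sentence or two rather than being waved through.
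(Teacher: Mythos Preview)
Your proposal is correct and follows essentially the same route as the paper: both arguments hinge on Lemma~\ref{lem:2axes} together with the observation that the distance between a type~1 and a type~2 axis is a genuine half-integer (equivalently, for $n$ even, rotating an axis by $r^{1/2}$ toggles ``through a bead'' and ``between beads''). The paper's written proof only records the inclusion $\SN_1^f\cap\SN_2^f\subseteq\SoN^f$, which already yields the second displayed equation and is all that is used downstream; you additionally supply the reverse inclusion $\SoN^f\subseteq\SN_1^f\cap\SN_2^f$, which is needed for the first displayed equality and which you argue correctly. Your identification of the toggle step as the point deserving a careful sentence is apt: the clean way to say it is that for $n$ even the possible symmetry axes are the lines at angles $\tfrac{k\pi}{n}$, with $k$ even giving type~2 and $k$ odd giving type~1, so $r^m\sigma_1=\sigma_2$ forces $2m\equiv k_2-k_1\pmod n$, and $m$ is a half-integer iff $k_1,k_2$ have opposite parity.
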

\begin{proof}
    If $[l]\in \SN^f$ has two axes  $[(l_1,\sigma_1)],[(l_2,\sigma_2)]$ of different type, then $d([(l_1,\sigma_1)],[(l_2,\sigma_2)])$ is an half integer. By Lemma \ref{lem:2axes}, we have $\pi([l]) = 2d([(l_1,\sigma_1)],[(l_2,\sigma_2)])$ is odd. 
\end{proof}
Consider again the necklace in Figure \ref{fig:twosymmetryaxes}. This necklace has period $3$, and is contained in both sides of the first equation in Lemma \ref{lem:intersection}. An immediate corollary of Lemma~\ref{lem:intersection} is the following.

\begin{corollary} For $\nu_2(n)\geq 1$, we have
    \begin{align*}
        |\SeN| \equiv & |\SeN^f| \mod 2\\
         = & |\SeN^f_1| +|\SeN^f_2|. \\
    \end{align*}
\end{corollary}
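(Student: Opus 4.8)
The statement to prove is the Corollary: for $\nu_2(n) \geq 1$,
\[
|\SeN| \equiv |\SeN^f| \pmod 2 \quad\text{and}\quad |\SeN^f| = |\SeN^f_1| + |\SeN^f_2|.
\]

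The plan is to deduce this from the two preceding lemmas together with the elementary orbit-counting principle for a group of order $2$. First I would address the congruence $|\SeN| \equiv |\SeN^f| \pmod 2$. The group $C_2 = \langle f\rangle$ acts on $\SN$ (this is already noted in the text at the start of Section~\ref{subsection:comb1}), and this action preserves orbit cardinality since $f$ normalizes $C_n$ inside $D_n$; hence $C_2$ acts on the subset $\SeN$ of even-cardinality orbits. For any action of a group of order $2$ on a finite set $X$, the non-fixed points come in pairs, so $|X| \equiv |X^f| \pmod 2$. Applying this with $X = \SeN$ gives $|\SeN| \equiv |\SeN^f| \pmod 2$, which is exactly the first line.

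Next I would establish the second line, the exact equality $|\SeN^f| = |\SeN^f_1| + |\SeN^f_2|$. By the decomposition stated just before Lemma~\ref{lem:intersection}, every element of $\SN^f$ has a symmetry axis of type $1$ or type $2$, so $\SN^f = \SN^f_1 \cup \SN^f_2$, and intersecting with the even-cardinality orbits gives $\SeN^f = \SeN^f_1 \cup \SeN^f_2$. This is a union, not a priori disjoint; but Lemma~\ref{lem:intersection} (using the hypothesis $\nu_2(n) \geq 1$) tells us precisely that $\SeN^f_1 \cap \SeN^f_2 = \emptyset$. By inclusion–exclusion for two sets, $|\SeN^f| = |\SeN^f_1| + |\SeN^f_2| - |\SeN^f_1 \cap \SeN^f_2| = |\SeN^f_1| + |\SeN^f_2|$, as claimed.

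There is essentially no obstacle here: both ingredients are already packaged as Lemma~\ref{lem:intersection} and the remark preceding it, and the remaining input is the trivial fact that a $C_2$-action pairs up non-fixed points. The only point requiring a word of care is making sure the $C_2$-action genuinely restricts to $\SeN$ — i.e. that flipping sends an even-cardinality $C_n$-orbit to an even-cardinality $C_n$-orbit — which follows because $f$ conjugates the $C_n$-action to itself, so $|C_n \cdot (fl)| = |C_n \cdot l|$. With that observed, both displayed lines follow immediately.
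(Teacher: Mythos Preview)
Your proof is correct and matches the paper's approach exactly. The paper presents this as an immediate corollary of Lemma~\ref{lem:intersection} without further detail; you have simply spelled out the two routine steps --- the $C_2$-orbit-counting argument for the congruence, and inclusion--exclusion together with Lemma~\ref{lem:intersection} for the equality --- including the small check that flipping preserves even-cardinality orbits.
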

\subsection{Combinatorics of type 1 symmetric orbits} \label{subsection:comb2}

Now assume $n$ is even. By choosing every other bead of a representative necklace $l$, any $[l]\in \SN$ can be uniquely decomposed into an unordered pair of necklace orbits (under the rotation action) with $\frac n2$ beads. This constructs a well-defined map
    \begin{equation}\label{defn:phi}
    \begin{split}
        \phi:\SN&\to   \bigcup_{j_1+j_2 = j}\mathrm{Sym}(\Orb(C_{\frac{n}{2}},\Neck(\frac n 2,j_1)), \Orb(C_{\frac{n}{2}},\Neck(\frac n 2,j_2)))\\
        [l]& \mapsto ([l_1],[l_2]),
        \end{split}
    \end{equation}
where $\mathrm{Sym}$ denotes the symmetric product.
    
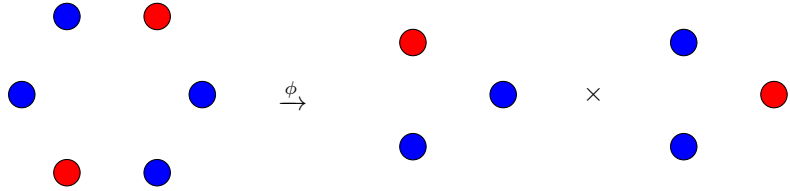
\begin{figure}
\centering
\begin{tikzpicture}[scale=0.8]

    \begin{scope}[shift={(0, 0)}]

        \def\radius{1.5cm}

        \def\numBeads{6}

        \foreach \i in {1,...,\numBeads} {
            \pgfmathsetmacro{\angle}{360/\numBeads * (\i - 1)}
            
            \ifnum\i=1
                \node[draw, fill=blue, circle, minimum size=10pt] at (\angle:\radius) {};
            \else
                \ifnum\i=3
                    \node[draw, fill=blue, circle, minimum size=10pt] at (\angle:\radius) {};
                \else
                    \ifnum\i=4
                        \node[draw, fill=blue, circle, minimum size=10pt] at (\angle:\radius) {};
                    \else
                        \ifnum\i=6
                            \node[draw, fill=blue, circle, minimum size=10pt] at (\angle:\radius) {};
                        \else
                            \node[draw, fill=red, circle, minimum size=10pt] at (\angle:\radius) {};
                        \fi
                    \fi
                \fi
            \fi
        }
    \end{scope}

    \node at (3, 0) { $\xrightarrow{\phi}$};

    \begin{scope}[shift={(5.5, 0)}]

        \def\radius{1cm}

        \def\numBeads{3}

        \foreach \i in {1,...,\numBeads} {
            \pgfmathsetmacro{\angle}{360/\numBeads * (\i - 1)}
            
            \ifnum\i=1
                \node[draw, fill=blue, circle, minimum size=10pt] at (\angle:\radius) {};
            \else
                \ifnum\i=3
                    \node[draw, fill=blue, circle, minimum size=10pt] at (\angle:\radius) {};
                \else
                    \node[draw, fill=red, circle, minimum size=10pt] at (\angle:\radius) {};
                \fi
            \fi
        }
    \end{scope}

    \node at (8, 0) { $\times$};

    \begin{scope}[shift={(10, 0)}]

        \def\radius{1cm}

        \def\numBeads{3}

        \foreach \i in {1,...,\numBeads} {
            \pgfmathsetmacro{\angle}{360/\numBeads * (\i - 1)}
            
            \ifnum\i=2
                \node[draw, fill=blue, circle, minimum size=10pt] at (\angle:\radius) {};
            \else
                          \ifnum\i=3
                \node[draw, fill=blue, circle, minimum size=10pt] at (\angle:\radius) {};
                \else
                \node[draw, fill=red, circle, minimum size=10pt] at (\angle:\radius) {};
            \fi
            \fi
        }
    \end{scope}

\end{tikzpicture}
\caption{An element in $\Orb(C_6,\Neck(6,4))^f$ decomposes to a symmetric product in $\Orb(C_3,\Neck(3,2)^f$ under $\phi$.}
\end{figure}
    \begin{lemma}
        For $[l]\in \SN_1^f$, we have $\phi([l])=([l_1], f\cdot [l_1])$, where $l_1\in \Neck(\frac{n}{2},\frac j2)$.
    \end{lemma}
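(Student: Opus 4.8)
The plan is to translate everything into bead positions. Label the $n$ beads $0,1,\dots,n-1$ so that bead $k$ sits at $\Phi(r)^k x$ and $r$ acts by $k\mapsto k+1 \bmod n$; then $\phi([l])$ is the unordered pair $\{[l_1],[l_2]\}$, where $l_1$ records the colours of the even-position beads $0,2,\dots,n-2$ under the relabelling $2m\mapsto m$, and $l_2$ those of the odd-position beads under $2m+1\mapsto m$. Since $[l]\in\SN_1^f$, fix a representative $l$ and a type-$1$ symmetry axis $\sigma$ of it. The reflection $f_\sigma$ lies in $\Phi(D_n)$, so on bead labels it acts by $k\mapsto c-k \bmod n$ for some integer $c$; its fixed beads are the solutions of $2k\equiv c\pmod n$, which (as $n$ is even) exist exactly when $c$ is even. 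Hence ``$\sigma$ is type $1$'', i.e.\ $\sigma$ avoids every bead, is equivalent to $c$ being odd. Write $c=2a+1$.

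Next I would observe that $k\mapsto c-k$ reverses parity, so $f_\sigma$ restricts to a bijection from the even-position beads onto the odd-position beads. Being a permutation of the beads it preserves colours, and $f_\sigma(l)=l$, so it carries the blue even-position beads bijectively onto the blue odd-position beads. These two sets therefore have equal size; since together they account for all $j$ blue beads, $j$ is even and each contains $j/2$ of them. In particular $l_1,l_2\in\Neck(\frac{n}{2},\frac{j}{2})$.

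Finally I would compute the induced identification explicitly. Under the relabellings, $f_\sigma$ sends the bead of index $m$ in $l_1$ (namely $2m$) to the bead $c-2m=2(a-m)+1$, which has index $a-m$ in $l_2$; invariance of the colouring of $l$ then says $l_1$ and $l_2$ have the same colour at indices $m$ and $a-m$ respectively. Equivalently $l_2=g\cdot l_1$ in $\Neck(\frac{n}{2},\frac{j}{2})$, where $g(m)=a-m$ on $\bZ/\frac{n}{2}\bZ$, that is $g=r^a f\in D_{\frac{n}{2}}$. Passing to $C_{\frac{n}{2}}$-orbits and using the dihedral relation $r^a f=f r^{-a}$ together with the fact that the orbit $[l_1]$ is $C_{\frac{n}{2}}$-fixed, we get $[l_2]=(r^a f)\cdot[l_1]=f\cdot(r^{-a}\cdot[l_1])=f\cdot[l_1]$ in $\SNhalf$. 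Hence $\phi([l])=\{[l_1],[l_2]\}=\{[l_1],f\cdot[l_1]\}$, as claimed.

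The one step deserving care is the claim used at the start, that $f_\sigma\in\Phi(D_n)$ and that the parity of $c$ records the type of $\sigma$. This rests on the fact that a reflection carrying a nonempty proper subset of the vertices of a regular $n$-gon to itself must be one of the $n$ dihedral reflections: its axis is the line through the centre and a fixed vertex, or the perpendicular bisector of a pair of vertices, and for $n$ even the first kind meets two beads (type $2$) while the second meets none (type $1$). Once this translation is in place, the rest is bookkeeping with $2m\leftrightarrow m$, so the proof is short; I would present the first paragraph most carefully.
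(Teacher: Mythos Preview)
Your argument is correct and follows the same line as the paper's proof: the paper simply observes that for a type~1 axis $\sigma$ the reflection $f_\sigma$ exchanges the two sub-necklaces, hence $[l_2]=f\cdot[l_1]$. You have unpacked this into explicit bead-index computations (the parity of $c$ characterising type~1, the induced map $m\mapsto a-m$ on $\bZ/\tfrac{n}{2}\bZ$, and the identification $r^af=fr^{-a}$ on orbits), which is exactly the content behind the paper's one-sentence claim that ``$f_\sigma$ exchanges the two smaller necklaces.''
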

    \begin{proof}
    For $[l]\in \SN^f_1$, let $[(l,\sigma)]$ denote a symmetry axis of type 1. The action of $f$ on $\SN^f_1$ can be induced by the reflection $f_\sigma$.  Since $f_\sigma$ exchanges the two smaller necklaces, we must have $[l_2]=f\cdot[l_1]$.
    \end{proof}
In particular, when restricted to $\SN_1^f$, $\phi$ surjects to $ \mathrm{Sym}(\SNhalf,f\cdot \SNhalf$. Therefore, we have
    \[
    \SN^f_1 = \bigcup_{([l],f\cdot[l]))\in\mathrm{Sym}(\SNhalf,f\cdot \SNhalf}\phi^{-1}([l],f\cdot[l]).
    \]
\begin{lemma}\label{lem:fibperiod1}
Consider the restriction of $\phi$ to $\SN_1^f$. Then
    \begin{enumerate}
    \item $|\phi^{-1}([l],f\cdot [l])| = \pi([l])$, if $[l]\notin \SNhalf^f$, 
    \item  $|\phi^{-1}([l],[l])| = \frac{\pi([l])+1}{2}$ if $[l]\in \SNhalf^f$ and $\pi([l])\equiv 1\mod 2$, 
    \item $|\phi^{-1}([l], [l])| = \frac{\pi([l])}{2}$ if $[l]\in \SNhalf^f$ and $\pi([l])\equiv 0\mod 2$.
    \end{enumerate}
\end{lemma}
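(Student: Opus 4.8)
The plan is to reconstruct every orbit in a fiber of $\phi|_{\SN_1^f}$ from a fixed half-necklace together with a single ``phase'' parameter, and then to count carefully when two phases produce the same orbit.

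Fix a representative $\tilde l\subseteq \bZ/(n/2)$ of $[l]\in\SNhalf$ and write $\pi=\pi([l])$, so that the rotation stabilizer $\mathbf{Stab}_{C_{n/2}}(\tilde l)$ is the subgroup $\langle\pi\rangle\le\bZ/(n/2)$ and $\pi\mid n/2$. For each $t\in\bZ/(n/2)$ define a necklace $m_t\in\Neck(n,j)$ by declaring the even position $2i$ to be blue exactly when $i\in\tilde l$, and the odd position $2i+1$ to be blue exactly when $i\in -\tilde l+t$; thus the ``even half'' of $m_t$ represents $[l]$ and the ``odd half'' represents $f\cdot[l]=[-\tilde l]$, so $\phi([m_t])=([l],f\cdot[l])$. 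First I would check that each $m_t$ is type $1$ symmetric: a between-beads reflection $k\mapsto c-k$ with $c$ odd fixes $m_t$ iff a single congruence of the form $t\equiv (c-1)/2\pmod{\pi}$ holds, which is solvable in $c$ because $\pi\mid n/2$. Next I would show $t\mapsto[m_t]$ surjects onto the fiber: given any $[m]$ with $\phi([m])=([l],f\cdot[l])$, rotate a representative by an odd amount if necessary (this interchanges the two half-necklaces) and then by an even amount so that its even half becomes exactly $\tilde l$; the odd half is then forced into the orbit $[-\tilde l]$, hence equals $-\tilde l+t$ for some $t$, so $[m]=[m_t]$. (In particular $\phi^{-1}([l],f\cdot[l])\subseteq\SN_1^f$ automatically.) Finally, $m_t=m_{t'}$ as necklaces iff $-\tilde l+t=-\tilde l+t'$ iff $t\equiv t'\pmod\pi$, so the distinct necklaces $m_t$ are exactly indexed by $t\in\bZ/\pi$.

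It then remains to decide, for $t,t'\in\bZ/\pi$, when $[m_t]=[m_{t'}]$, i.e.\ when $r^am_t=m_{t'}$ for some $a$. I would split on the parity of $a$. An even rotation $r^{2b}$ acts on the two halves separately, so $r^{2b}m_t=m_{t'}$ forces $b\in\langle\pi\rangle$ and then $t\equiv t'\pmod\pi$, contributing nothing beyond $m_t=m_{t'}$. An odd rotation interchanges the even and odd halves, so $r^am_t=m_{t'}$ with $a$ odd is possible only when $f\cdot[l]=[l]$, i.e.\ $[l]\in\SNhalf^f$; in that case choose $s_0$ with $-\tilde l+s_0=\tilde l$ (the set of such $s_0$ is a coset of $\langle\pi\rangle$, so $s_0$ is well defined mod $\pi$), and a short computation with the two half-patterns shows an odd $a$ works iff $t+t'\equiv 2s_0-1\pmod\pi$. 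Hence on $\bZ/\pi$ we have $[m_t]=[m_{t'}]$ iff $t=t'$ when $[l]\notin\SNhalf^f$, and iff $t=t'$ or $t'=(2s_0-1)-t$ when $[l]\in\SNhalf^f$.

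The conclusion is a Burnside count covering all three cases. When $[l]\notin\SNhalf^f$ the fiber has $\pi=\pi([l])$ elements. When $[l]\in\SNhalf^f$ the fiber is the orbit set of the involution $\tau(t)=(2s_0-1)-t$ on $\bZ/\pi$, hence has $\tfrac12\big(\pi+\#\mathrm{Fix}\,\tau\big)$ elements; the fixed set $\{t:2t\equiv 2s_0-1\pmod\pi\}$ has exactly one element when $\pi$ is odd (as $2$ is then invertible mod $\pi$) and is empty when $\pi$ is even (a parity obstruction, $2s_0-1$ being odd), giving $\tfrac{\pi+1}2$ and $\tfrac\pi2$ respectively. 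I expect the main obstacle to be the odd-rotation analysis: tracking how an odd rotation permutes the two interleaved half-necklaces, extracting the clean congruence $t+t'\equiv 2s_0-1\pmod\pi$, pinning down $s_0$ modulo $\pi$, and obtaining the parity dichotomy for $\#\mathrm{Fix}\,\tau$; the remaining steps are bookkeeping parallel to the analysis of $\phi$ and of Lemma~\ref{lem:2axes}.
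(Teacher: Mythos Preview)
Your argument is correct and follows essentially the same approach as the paper: fix a representative of $[l]$ and count the ways to interleave a representative of $f\cdot[l]$. The paper's proof is a one-sentence sketch (``consider a fixed representative of $[l]$, and count how many different ways one can insert $f\cdot l$''), whereas you carry this out rigorously via the phase parameter $t\in\bZ/\pi$, the even/odd rotation dichotomy, and the Burnside count for the involution $\tau$; the congruence $t+t'\equiv 2s_0-1\pmod\pi$ and the resulting parity split on $\pi$ are exactly the details the paper leaves implicit.
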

\begin{proof}
    $|\phi^{-1}([l],f\cdot [l])|$ is the number of elements in $\SN_1^f$ that can be formed by interweaving $[l]$ and $f\cdot [l]$. The above statement can be seen by considering a fixed representative of $[l]$, and counting how many different ways one can insert $f\cdot l$.
\end{proof}

For example, the element shown in Figure \ref{fig:orb52} has period 5, which can generate 3 distinct elements by  (2)  of Lemma \ref{lem:fibperiod1}. These three elements are shown in Figure \ref{fig:orb104}.

\begin{figure}
\centering
\begin{tikzpicture}
    \def\radius{1.5cm}

    \def\numBeads{5}

    \foreach \i in {1,...,\numBeads} {
        \pgfmathsetmacro{\angle}{360/\numBeads * (\i+1)}
        
        \ifnum\i=2
            \node[draw, fill=blue, circle, minimum size=10pt] at (\angle:\radius) {};
        \else
            \ifnum\i=3
                \node[draw, fill=blue, circle, minimum size=10pt] at (\angle:\radius) {};
                    \else
                        \node[draw, fill=red, circle, minimum size=10pt] at (\angle:\radius) {};
            \fi
        \fi
    }

\end{tikzpicture}
\caption{An element in $\Orb(C_5,\Neck(5,2))^f$.}\label{fig:orb52}
\end{figure}
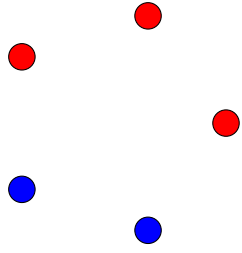

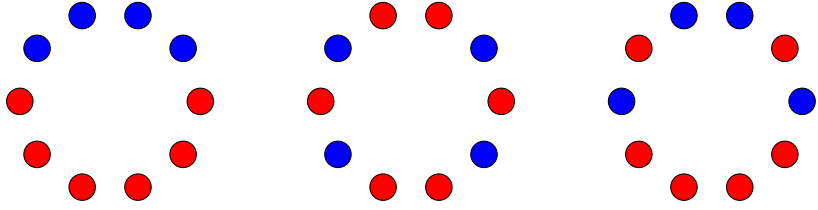
\begin{figure}
\centering
\begin{tikzpicture}[scale=0.8]

    \begin{scope}[shift={(0, 0)}]
        \def\radius{1.5cm}

        \def\numBeads{10}

        \foreach \i in {1,...,\numBeads} {
            \pgfmathsetmacro{\angle}{360/\numBeads * (\i - 1)}
            
            \ifnum\i=2
                \node[draw, fill=blue, circle, minimum size=10pt] at (\angle:\radius) {};
            \else
                \ifnum\i=3
                    \node[draw, fill=blue, circle, minimum size=10pt] at (\angle:\radius) {};
                \else
                    \ifnum\i=4
                        \node[draw, fill=blue, circle, minimum size=10pt] at (\angle:\radius) {};
                    \else
                        \ifnum\i=5
                            \node[draw, fill=blue, circle, minimum size=10pt] at (\angle:\radius) {};
                        \else
                            \node[draw, fill=red, circle, minimum size=10pt] at (\angle:\radius) {};
                        \fi
                    \fi
                \fi
            \fi
        }
    \end{scope}

    \begin{scope}[shift={(5, 0)}]
        \def\radius{1.5cm}

        \def\numBeads{10}

        \foreach \i in {1,...,\numBeads} {
            \pgfmathsetmacro{\angle}{360/\numBeads * (\i+2)}
            
            \ifnum\i=2
                \node[draw, fill=blue, circle, minimum size=10pt] at (\angle:\radius) {};
            \else
                \ifnum\i=4
                    \node[draw, fill=blue, circle, minimum size=10pt] at (\angle:\radius) {};
                \else
                    \ifnum\i=7
                        \node[draw, fill=blue, circle, minimum size=10pt] at (\angle:\radius) {};
                    \else
                        \ifnum\i=9
                            \node[draw, fill=blue, circle, minimum size=10pt] at (\angle:\radius) {};
                        \else
                            \node[draw, fill=red, circle, minimum size=10pt] at (\angle:\radius) {};
                        \fi
                    \fi
                \fi
            \fi
        }
    \end{scope}

    \begin{scope}[shift={(10, 0)}]
        \def\radius{1.5cm}

        \def\numBeads{10}

        \foreach \i in {1,...,\numBeads} {

            \pgfmathsetmacro{\angle}{360/\numBeads * (\i - 1)}

            \ifnum\i=1
                \node[draw, fill=blue, circle, minimum size=10pt] at (\angle:\radius) {};
            \else
                \ifnum\i=3
                    \node[draw, fill=blue, circle, minimum size=10pt] at (\angle:\radius) {};
                \else
                    \ifnum\i=4
                        \node[draw, fill=blue, circle, minimum size=10pt] at (\angle:\radius) {};
                    \else
                        \ifnum\i=6
                            \node[draw, fill=blue, circle, minimum size=10pt] at (\angle:\radius) {};
                        \else
                            \node[draw, fill=red, circle, minimum size=10pt] at (\angle:\radius) {};
                        \fi
                    \fi
                \fi
            \fi
        }
    \end{scope}

\end{tikzpicture}
\caption{The three distinct elements in $\Orb(C_{10},\Neck(10,4))^f$ that can be generated by the same element in $\Orb(C_5,\Neck(5,2))^f$.}\label{fig:orb104}
\end{figure}

\begin{lemma}\label{lem:fibperiod2}
Take $[l]\in \SN_1^f$, then we have $\phi([l])=([l_1],[l_1])$ for some $[l_1]\in\SNhalf^f$. If $[l_1]$ satisfies $\pi([l_1])\equiv 1 \mod 2$, then $\pi([l]) = \pi([l_1])$. Otherwise, $\pi([l]) = 2\pi([l_1])$.

\end{lemma}
\begin{proof}
    The exceptional case is the following. View $l_1$ as a subset of $\Phi(D_n)\cdot x$. Then flip $l_1$ with respect to $\sigma_0\in \bP^1(\bR)$, where $\sigma_0$ is perpendicular to a symmetry axis of $l_1$. Then $[l]$ is the $C_n$-orbit of $l_1\cup f_{\sigma_0}\cdot l_1$.
\end{proof}

For example, the three elements shown in Figure \ref{fig:orb104} have period $10,5,10$ respectively, where the middle one is the exceptional case in Lemma \ref{lem:fibperiod2}.

Therefore, for $[l]\in \Orb_{odd}(C_{\frac n2},\Neck(\frac n2,\frac j2))^f$, we have 
\begin{equation}\label{eq:evenfiberperiod}
\left|\phi^{-1}([l],[l])\cap  \SeN\right|= \frac{\pi([l])-1}{2}.    
\end{equation}

\begin{proposition}\label{prop:S1}
For $\nu_2(n)\geq 1$,    \[|\SeN_1^f| = \frac12 \left(\binom{\frac n2}{\frac j2}-|\SoNhalf^f|\right).\]
\end{proposition}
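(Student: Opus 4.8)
The plan is to count $|\SeN_1^f|$ by summing the sizes of the fibers of $\phi$ restricted to $\SeN_1^f$ over the image, which lies in $\mathrm{Sym}(\SNhalf, f\cdot\SNhalf)$. By the lemmas already established (especially the description $\phi([l]) = ([l_1], f\cdot[l_1])$ for $[l]\in\SN_1^f$ and the fiber counts of Lemma~\ref{lem:fibperiod1}), the fiber over a symmetric-product class splits into two cases according to whether the underlying smaller orbit $[l_1]$ is itself flip-symmetric.

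First I would handle the \emph{non-symmetric} pairs: classes $\{[l_1], f\cdot[l_1]\}$ with $[l_1]\notin\SNhalf^f$. These come in pairs $\{[l_1], f[l_1]\}$ with $[l_1]\neq f[l_1]$. By Lemma~\ref{lem:fibperiod1}(1) the fiber over such a class has size $\pi([l_1]) = \pi(f[l_1])$, and the parity of this fiber (whether it contributes to $\SeN_1^f$, i.e. how many elements of the fiber have even $C_n$-period) needs to be tracked; but in fact the cleaner bookkeeping is to observe that summing $|\phi^{-1}(\text{class})|$ over \emph{all} classes, symmetric and not, recovers $|\SN_1^f|$, and then to separately subtract the odd-period contributions. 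Concretely: for $[l_1]\notin\SNhalf^f$, an element of $\phi^{-1}([l_1],f[l_1])$ has period $\pi([l_1])$ if we are careful — so $\SeN\cap\phi^{-1}([l_1],f[l_1])$ is either the whole fiber or empty depending on the parity of $\pi([l_1])$. For the \emph{symmetric} pairs $([l_1],[l_1])$ with $[l_1]\in\SNhalf^f$: by Lemma~\ref{lem:fibperiod1}(2),(3) and Lemma~\ref{lem:fibperiod2}, the intersection $\phi^{-1}([l_1],[l_1])\cap\SeN$ has size $\frac{\pi([l_1])-1}{2}$ when $\pi([l_1])$ is odd (the displayed equation just before the Proposition), and a corresponding count when $\pi([l_1])$ is even.

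Assembling these, I expect the sum over non-symmetric pairs to telescope to exactly $\frac12\binom{n/2}{j/2}$ minus a correction — the point being that $\sum_{[l_1]} \pi([l_1])$ over all $C_{n/2}$-orbits equals $\binom{n/2}{j/2} = |\Neck(n/2,j/2)|$, and the flip pairs up orbits two-to-one except on $\SNhalf^f$. So the contribution of the generic (non-flip-fixed) orbits is $\tfrac12\bigl(\binom{n/2}{j/2} - |\SNhalf^f|\bigr)$ worth of "raw" count, weighted appropriately by whether periods are even; and the contribution of the flip-fixed orbits $[l_1]\in\SNhalf^f$ is what must be shown to vanish mod the relevant count, or more precisely to reorganize so the $-|\SoNhalf^f|$ term appears. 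The even-period flip-fixed orbits contribute $\frac{\pi([l_1])}{2}$ per fiber, and odd-period flip-fixed orbits contribute $\frac{\pi([l_1])-1}{2}$, and the "$-1$"s accumulate over exactly the odd-period flip-fixed orbits, giving the $-|\SoNhalf^f|$ term after dividing by $2$.

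The main obstacle I anticipate is the careful parity/period bookkeeping in matching fiber elements to $\SeN$: one must verify precisely which elements of each fiber $\phi^{-1}(\text{pair})$ land in $\Orb_{even}$ versus $\Orb_{odd}$, using that an element $[l]$ with $\phi([l]) = ([l_1], f[l_1])$ has period either $\pi([l_1])$ or $2\pi([l_1])$ (Lemma~\ref{lem:fibperiod2} and the interweaving argument of Lemma~\ref{lem:fibperiod1}), and that the "exceptional" interweaving of a flip-fixed $[l_1]$ is the unique one whose period stays odd. Handling the exceptional element correctly — it is flip-fixed of type 1, has odd period, hence lies in $\SoN$ not $\SeN$, and so is excluded — is exactly what produces the "$-1$" in $\frac{\pi([l_1])-1}{2}$ and ultimately the $-|\SoNhalf^f|$ in the formula. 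Once this case analysis is pinned down, the final identity is a straightforward summation using $\sum_{[l_1]\in\SNhalf}\pi([l_1]) = \binom{n/2}{j/2}$ and the two-to-one pairing of non-flip-fixed orbits under $f$.
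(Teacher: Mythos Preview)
Your approach is essentially the paper's: sum the fibers of $\phi|_{\SN_1^f}$ over the image, split according to whether $[l_1]$ is flip-fixed, and use $\sum_{[l_1]\in\SNhalf}\pi([l_1]) = \binom{n/2}{j/2}$ together with the $-1$'s from the odd-period flip-fixed orbits to arrive at the formula.

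The one place where your exposition wobbles is the period of $[l]$ over a \emph{non}-flip-fixed pair. You write that such an $[l]$ ``has period $\pi([l_1])$ if we are careful'' and that the intersection with $\SeN$ is ``either the whole fiber or empty depending on the parity of $\pi([l_1])$.'' This is not right, and it is actually the easy case: if $[l_1]\neq f[l_1]$ and $r^m l = l$ with $m$ odd, then $r^m$ swaps the two interleaved sub-necklaces, forcing $[l_1]=f[l_1]$, a contradiction. Hence every element of $\phi^{-1}([l_1],f[l_1])$ with $[l_1]\notin\SNhalf^f$ has \emph{even} $C_n$-period, so the whole fiber lies in $\SeN_1^f$ regardless of the parity of $\pi([l_1])$. (Lemma~\ref{lem:fibperiod2} does not cover this case; it only treats $\phi([l])=([l_1],[l_1])$.) Once this is in hand, the paper's computation is just
\[
|\SeN_1^f| \;=\; \tfrac12\!\!\sum_{[l_1]\notin\SNhalf^f}\!\!\pi([l_1]) \;+\!\! \sum_{[l_1]\in\SeNhalf^f}\!\!\tfrac{\pi([l_1])}{2} \;+\!\! \sum_{[l_1]\in\SoNhalf^f}\!\!\tfrac{\pi([l_1])-1}{2},
\]
which collapses immediately to $\tfrac12\bigl(\binom{n/2}{j/2}-|\SoNhalf^f|\bigr)$, exactly as you predicted.
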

\begin{proof} By Lemma \ref{lem:fibperiod1} and \eqref{eq:evenfiberperiod}, we have
\begin{align*}
    |\SeN_1^f| = & \frac 12\sum_{[l]\notin \SNhalf^f}\pi([l]) + \sum_{[l]\in \SeNhalf^f}\frac{\pi([l])}{2}\\
    & + \sum_{[l]\in\SoNhalf^f}\frac{\pi([l])-1}{2}\\
     = & \frac12 \left(\sum_{[l]\in \SNhalf}\pi([l]) - |\SoNhalf^f|\right)
\end{align*}
By definition $\sum_{[l]\in \SNhalf}\pi([l]) = |\mathrm{Neck}(\frac n2,\frac j2)| = \binom{\frac n2}{\frac j2}$. So 
\begin{align*}
  |\SeN_1^f|   =  \frac12\left(\sum_{d|\frac n2}dN(d,\rho d)-|\SoNhalf^f|\right)
\end{align*}
\end{proof}

\begin{lemma}\label{lem:ooddvalue}
For any $n,j\in \bZ_{\geq0}$, we have
    $$|\SoN^f| = \left\{ \begin{array}{cc}
        \binom{\frac{n}{2^{\nu_2(j)+1}}-\frac12}{\frac{j}{2^{\nu_2(j)+1}}}, & \textrm{ when } \nu_2(j)>\nu_2(n),\\ \\
         \binom{\frac{n}{2^{\nu_2(j)+1}}-\frac12}{\frac{j}{2^{\nu_2(j)+1}}-\frac12}, &\textrm{ when }  \nu_2(j) = \nu_2(n),\\ \\
         0, & \textrm{ when } \nu_2(j)<\nu_2(n).
    \end{array}\right.$$
\end{lemma}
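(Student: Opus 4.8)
The plan is to count $|\SoN^f|$ by decomposing an orbit with odd period and a symmetry axis of type 2. First I would observe that if $[l]\in\SoN^f$, then $\pi([l])$ is odd, so $\frac{n}{\pi([l])}$ is even (recall $n$ could be odd only in the last case where the set is empty — actually we must be careful: if $\nu_2(j)<\nu_2(n)$ the odd-period condition already forces a contradiction, which I expect gives the vanishing in the third case). By Lemma~\ref{lem:2axes}, since $\frac{n}{\pi([l])}$ is even there are exactly two symmetry axes, and by Lemma~\ref{lem:intersection} an orbit of odd period lies in $\SN_1^f\cap\SN_2^f$, so it has one axis of each type. Thus $\SoN^f$ is in bijection with the set of odd-period orbits carrying a type-2 axis, and I would work with a representative $l$ in which the type-2 axis $\sigma$ passes through a bead at the top $(0,1)$.

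Next I would fold the necklace along $\sigma$. Because $\sigma$ is a type-2 axis through two antipodal-looking bead positions (positions $0$ and $\frac n2$ when $n$ is even; when $n$ is odd, position $0$ only), the reflection $f_\sigma$ identifies the beads in symmetric pairs and fixes the bead(s) lying on $\sigma$. This realizes $l$ as determined by a "half-necklace": a linear string of beads of a definite length, with parity constraints on the number of blue beads coming from whether the fixed bead(s) on the axis are blue or red. Tracking the odd-period condition through this folding: $\pi([l])$ odd together with the two-axis structure pins down the symmetry precisely, and the rotation orbit of $(l,\sigma)$ corresponds to a cyclic equivalence class of half-necklaces whose length is $\frac{\pi([l])}{2}$ doubled appropriately. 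The upshot I expect is that $\SoN^f$ is in bijection with a set of necklaces (or plain subsets) on $\frac{n}{2^{\nu_2(j)+1}}$ or so positions, and counting these plain subsets of the appropriate size gives the stated binomial coefficients — the half-integer upper and lower entries are exactly the bookkeeping of the bead(s) fixed on the axis. The split into the three cases $\nu_2(j)>\nu_2(n)$, $\nu_2(j)=\nu_2(n)$, $\nu_2(j)<\nu_2(n)$ corresponds to whether the forced-blue or forced-red beads on the axis can be accommodated with $j$ blue beads total while keeping the period odd.

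Concretely, I would set $m:=\nu_2(j)$ and iterate the "choose every other bead" decomposition $\phi$ of \eqref{defn:phi} together with the type-1/type-2 analysis: applying $\phi$ to a type-2 symmetric orbit halves $n$ and $j$ and, by the period formulas of Lemma~\ref{lem:fibperiod2} and Lemma~\ref{lem:2axes}, preserves the "odd final period / type-2 axis" shape as long as $j$ remains even. After $m$ steps $j$ has been halved $m$ times to an odd number, $n$ to $\frac{n}{2^m}$, and one more reflection-fold along the (now genuinely type-2, axis-through-a-bead) symmetry axis divides by a further $2$ and leaves a plain subset count. Counting that subset — of size $\frac{j}{2^{m+1}}$ blue among $\frac{n}{2^{m+1}}$ positions, with the $\frac12$ shifts recording the axis bead — yields $\binom{\frac{n}{2^{m+1}}-\frac12}{\frac{j}{2^{m+1}}}$ or $\binom{\frac{n}{2^{m+1}}-\frac12}{\frac{j}{2^{m+1}}-\frac12}$ according to whether $\nu_2(n)>m$ or $\nu_2(n)=m$, and the process aborts (empty set) when $\nu_2(n)<m$ because at some halving step $n$ becomes odd while $j$ is still even, and an odd-$n$ necklace cannot have an even-period-forcing type-1 axis, contradicting $\SN_1^f\cap\SN_2^f$ membership for an orbit that has not yet reached odd $j$.

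The main obstacle I anticipate is making the folding bijection precise: correctly matching the cyclic-equivalence structure of the "half-necklace" with the rotation orbits of $(l,\sigma)$ — in particular handling the bead(s) that lie \emph{on} the axis, which are fixed by $f_\sigma$ and are the source of the half-integer entries — and verifying that the odd-period hypothesis survives each halving step and is exactly what selects the correct case. The convention that $\binom{a}{b}=0$ when $a$ or $b\notin\bZ$ should then automatically produce the vanishing in the third case and the correct degenerate values when $j=0$, so once the bijection is set up the binomial identities are immediate and no serious computation is needed.
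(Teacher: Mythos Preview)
Your core idea --- iteratively halve $(n,j)$ until one of them is odd, then count directly --- is exactly the paper's approach, but the paper's execution is a two-line argument with none of the axis machinery. The paper simply observes that for $n,j\in 2\bZ$ there is a bijection $\SoN^f \cong \SoNhalf^f$ (an odd-period orbit is the $\tfrac{n}{d}$-fold repeat of its fundamental $d$-bead pattern, and the same pattern repeated $\tfrac{n}{2d}$ times gives the corresponding orbit in $\Neck(\tfrac n2,\tfrac j2)$; flip-fixedness is a property of the pattern alone), and then declares the base case $\nu_2(n)\cdot\nu_2(j)=0$ ``clear.'' Your $\phi$-iteration, type-1/type-2 axis analysis, and invocation of the fiber lemmas are all unnecessary for this particular count.

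There is also a genuine error: in your ``Concretely'' paragraph the first and third cases are swapped, contradicting your own correct parenthetical remark earlier. You set $m=\nu_2(j)$ and claim the set is empty when $\nu_2(n)<m$, i.e.\ when $\nu_2(n)<\nu_2(j)$; but in that situation, after $\nu_2(n)$ halvings $n'$ has become odd, every orbit automatically has odd period, and the flip-fixed count is the \emph{nonzero} binomial $\binom{(n'-1)/2}{j'/2}$ --- your folding picture, applied at this stage, gives exactly this. The genuinely empty case is $\nu_2(j)<\nu_2(n)$: after $\nu_2(j)$ halvings $j'$ is odd while $n'$ is still even, and then no odd divisor $d$ of $n'$ can satisfy $\tfrac{j'd}{n'}\in\bZ$, so there are no odd-period orbits at all. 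Your stated justification for the abort --- that an odd-$n$ necklace ``cannot have an even-period-forcing type-1 axis'' --- is not the obstruction; when $n$ is odd there is no obstruction.
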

\begin{proof}
    Notice that for any $n,j\in 2\bZ$, we have $$\SoN^f = \SoNhalf^f.$$ Then, the problem reduces to the case when $\nu_2(n)\cdot\nu_2(j) =0$. In this case, the counting corresponds to the number of ways to assign exactly half of the blue beads to one half of the necklace when the total number of blue beads is even, or half minus one when the number is odd.
\end{proof}

\subsection{Proof of Theorem \ref{thm:closedformula-1} when $\nu_2(n)=1$ and $\nu_2(j)\geq1$}\label{subsection:case2}

Consider the restriction of the map $\phi$ in \eqref{defn:phi} to $\SN_2^f$. As $n\equiv 2\mod4$, the image is contained in \[\bigcup_{j_1+j_2=j}\mathrm{Sym}(\Orb(C_{\frac n2},\Neck(\frac n2,j_1))^f,\Orb(C_{\frac n2},\Neck(\frac n2,j_2))^f).\]
An example of the morphism $\phi$ in this case is shown in Figure \ref{fig:examplephi2}.

\begin{figure}
\centering
\begin{tikzpicture}[scale=0.8]

    \begin{scope}[shift={(0, 0)}]
        \def\radius{1.5cm}

        \def\numBeads{10}

        \foreach \i in {1,...,\numBeads} {
            \pgfmathsetmacro{\angle}{360/\numBeads * (\i - 1)+90}
            
            \ifnum\i=1
                \node[draw, fill=blue, circle, minimum size=10pt] at (\angle:\radius) {};
            \else
                \ifnum\i=2
                    \node[draw, fill=blue, circle, minimum size=10pt] at (\angle:\radius) {};
                \else
                    \ifnum\i=10
                        \node[draw, fill=blue, circle, minimum size=10pt] at (\angle:\radius) {};
                    \else
                        \ifnum\i=6
                            \node[draw, fill=blue, circle, minimum size=10pt] at (\angle:\radius) {};
                        \else
                            \node[draw, fill=red, circle, minimum size=10pt] at (\angle:\radius) {};
                        \fi
                    \fi
                \fi
            \fi
        }
    \end{scope}
    \node at (3, 0) { $\xrightarrow{\phi}$};

    \begin{scope}[shift={(5.5, 0)}]

        \def\radius{1cm}

        \def\numBeads{5}

        \foreach \i in {1,...,\numBeads} {
            \pgfmathsetmacro{\angle}{360/\numBeads * (\i - 1)+90}
            
            \ifnum\i=1
                \node[draw, fill=blue, circle, minimum size=10pt] at (\angle:\radius) {};
            \else
                    \node[draw, fill=red, circle, minimum size=10pt] at (\angle:\radius) {};
                \fi
        }
    \end{scope}

    \node at (8, 0) { $\times$};

    \begin{scope}[shift={(10, 0)}]
        \def\radius{1cm}

        \def\numBeads{5}

        \foreach \i in {1,...,\numBeads} {
            \pgfmathsetmacro{\angle}{360/\numBeads * (\i - 1)-90}
            
            \ifnum\i=4
                \node[draw, fill=blue, circle, minimum size=10pt] at (\angle:\radius) {};
            \else
                \ifnum\i=3
                \node[draw, fill=blue, circle, minimum size=10pt] at (\angle:\radius) {};
                \else
                \ifnum\i=1
                \node[draw, fill=blue, circle, minimum size=10pt] at (\angle:\radius) {};
                \else
    
                \node[draw, fill=red, circle, minimum size=10pt] at (\angle:\radius) {};
            \fi
            \fi
            \fi
        }
    \end{scope}

\end{tikzpicture}
\caption{An element in $\Orb(C_{10},\Neck(10,4))^f_2$ decomposes under $\phi$ to a symmetric product of two elements in $\Orb(C_{5},\Neck(5,1))^f$ and $\Orb(C_{5},\Neck(5,3))^f$, repsectively.}\label{fig:examplephi2}
\end{figure}
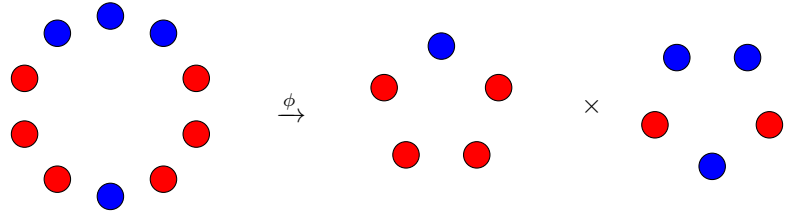

Moreover, since $\frac{n}{2}$ is odd, and thus any symmetry axis for $[l]$ only passes through one bead, it is of type two. It follows that the restriction of $\phi$ to $\SN_2^f$ is bijective, and we have
\begin{equation}\label{eqn:ob2}
\begin{split}
    |\SN_2^f|  = & \sum_{j_1+j_2=j,j_1< j_2}|\Orb(C_{\frac n2},\Neck(\frac n2,j_1))^f|\cdot |\Orb(C_{\frac n2},\Neck(\frac n2,j_2))^f| \\
    & + \frac12|\SNhalf^f|\cdot(|\SNhalf^f|+1).\\
    \end{split}
    \end{equation}
    Next, we have 
    \begin{lemma} Let $[l]\in \SN_2^f$. Then
    \begin{enumerate}
        \item $\pi([l]) = \pi([l_1])$,  \text{ if } $\phi([l]) = ([l_1], [l_1])$,
        \item $\pi([l]) = 2\mathrm{lcm}(\pi([l_1]),\pi([l_2]))$, \text{ if } $\phi([l]) = ([l_1],[l_2])$, $[l_1]\neq  [l_2]$.
    \end{enumerate}    
    \end{lemma}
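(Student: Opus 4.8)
The plan is to unwind the definition of the period $\pi([l])$ in terms of the two sub-necklaces produced by $\phi$. Recall that $[l] \in \SN_2^f$ means the $C_n$-orbit of $l$ has a type 2 symmetry axis, i.e. one passing through a bead. Since $n \equiv 2 \bmod 4$, writing $n = 2m$ with $m$ odd, the decomposition $\phi$ assigns to $l$ the pair of orbits $([l_1],[l_2])$ obtained by taking alternate beads; because $m$ is odd, each $[l_i]$ lies in $\Orb(C_m, \Neck(m,j_i))^f$, and the type 2 axis of $[l]$ restricts to a (type 2) axis of each $[l_i]$. The period $\pi([l]) = |C_n \cdot l|$ is the smallest positive $k$ with $r^k l = l$; the key observation is that $r^2$ acts on the ambient circle of $n$ beads by rotating the even-indexed sublattice and the odd-indexed sublattice each by one step, so $r^{2t}l = l$ holds iff $r^t l_1 = l_1$ (as necklaces in $\Neck(m,j_1)$) and $r^t l_2 = l_2$, i.e. iff $\pi([l_1])$ and $\pi([l_2])$ both divide $t$; whereas odd powers $r^{2t+1}$ interchange the even and odd sublattices.

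First I would treat case (1), where $\phi([l]) = ([l_1],[l_1])$, meaning the two interwoven sub-necklaces coincide. Here there does exist an odd power of $r$ fixing $l$: since the two sublattices carry the same necklace and are related by the reflection through the type 2 axis, the rotation $r^1$ (suitably normalized so that it carries the even sublattice to the odd one and vice versa) maps $l$ to itself precisely when the two copies are aligned correctly, and more generally $r^{2t+1}l = l$ iff $r^t l_1 = l_1$ after accounting for the half-step shift built into the type 2 configuration. Combining the even and odd analyses, the set of $k$ with $r^k l = l$ is exactly the set of multiples of $\pi([l_1])$ (the odd multiples realized via the swap, the even ones via $r^2$ acting diagonally), so $\pi([l]) = \pi([l_1])$. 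I would make the half-step bookkeeping precise by working with the faithful representation $\Phi: D_n \hookrightarrow O(2,\bR)$ already introduced, so that "rotation by one bead on the sublattice of $m$ beads" and "rotation by two beads on the circle of $n$ beads" are literally equal as elements of $\Phi(C_n)$.

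Next I would treat case (2), where $[l_1] \neq [l_2]$. Now no odd power of $r$ can fix $l$: an odd power would swap the two sublattices, hence would send the orbit $[l_1]$ to $[l_2]$, forcing $[l_1] = [l_2]$, a contradiction. So $\pi([l])$ is even, say $\pi([l]) = 2s$, and by the even-power analysis $r^{2s}l = l$ iff $\pi([l_1]) \mid s$ and $\pi([l_2]) \mid s$, i.e. $s = \mathrm{lcm}(\pi([l_1]),\pi([l_2]))$, giving $\pi([l]) = 2\,\mathrm{lcm}(\pi([l_1]),\pi([l_2]))$. I should double-check the edge case where $j_1 = j_2$ but $[l_1]\neq[l_2]$ is still covered by this argument (it is, since only the equality of orbits, not of bead-counts, is used).

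The main obstacle I anticipate is the half-integer / type 2 bookkeeping in case (1): one must be careful that a type 2 axis through a single bead of the $m$-bead sub-necklace corresponds, after interweaving, to an axis of $l$ through a bead of the $n$-bead necklace, and that the "diagonal" versus "swap" rotations are correctly identified inside $\Phi(C_n)$ — off-by-one errors here would change $\pi([l]) = \pi([l_1])$ into $\pi([l]) = 2\pi([l_1])$, which is the wrong answer. A clean way to avoid this is to phrase everything via stabilizer subgroups: let $G = C_n$ act on $\Neck(n,j)$, let $H \leq G$ be the index-$2$ subgroup $\langle r^2\rangle$, and observe $\phi$ is $H$-equivariant (with $H \cong C_m$ acting diagonally on the sub-necklace pair after the identification $r^2 \leftrightarrow r$), so $|\mathbf{Stab}_G(l) \cap H| = |\mathbf{Stab}_{C_m}(l_1) \cap \mathbf{Stab}_{C_m}(l_2)|$, from which $\pi$ is read off, and then separately determine whether $\mathbf{Stab}_G(l)$ meets the nontrivial coset $r\langle r^2\rangle$, which happens in case (1) and not in case (2). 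This reduces the whole lemma to the two observations about $H$-stabilizers and the coset, each of which is immediate.
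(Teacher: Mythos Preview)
Your overall strategy coincides with the paper's: both reduce the computation of $\pi([l])$ to the question of whether $\mathbf{Stab}_{C_n}(l)$ contains an odd power of $r$. The paper's proof is the single sentence ``$l$ has a stabilizer $r^k$ with $k\equiv 1\pmod 2$ if and only if $\phi([l])=([l_1],[l_1])$,'' and your stabilizer-subgroup reformulation at the end is exactly this. Your treatment of case (2) is fine.

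The gap is in case (1). You write that ``the two sublattices \dots are related by the reflection through the type 2 axis,'' but this is the wrong way around: a type 2 axis passes through beads at positions $p$ and $p+m$, and since $m=n/2$ is odd these have opposite parity; the reflection $i\mapsto 2p-i$ has $2p$ even and therefore \emph{preserves} the parity of $i$, so it fixes each sublattice setwise rather than swapping them (it is the type~1 reflection that swaps them). More seriously, the implication ``$[l_1]=[l_2]\Rightarrow$ some odd $r^k$ fixes $l$'' is false without the type 2 hypothesis. For $n=6$, take $l=(B,B,R,R,R,R)$: here $a=b=(B,R,R)$ so $\phi([l])=([a],[a])$, yet $\pi([l])=6=2\pi([a])$. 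This $l$ lies in $\SN_1^f$ but not $\SN_2^f$, so the hypothesis is doing real work that your argument never invokes correctly; calling the coset check ``immediate'' hides exactly this.

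Here is what the type 2 hypothesis actually buys. Writing $r(a,b)=(\sigma b,a)$ with $\sigma$ the one-step shift on $m$-bead necklaces, one has $r^{2t+1}(a,b)=(\sigma^{t+1}b,\sigma^t a)$, so $r^{2t+1}l=l$ iff $b=\sigma^t a$ and $\sigma^{2t+1}a=a$. If $[a]=[b]$, write $b=\sigma^s a$; the problem is that $s$ need not satisfy $\pi([a])\mid (2s+1)$. But the type 2 reflection (through positions $0$ and $m$, say) forces $a_{i}=a_{-i}$ and $b_{i}=b_{-i-1}$, and substituting $b=\sigma^s a$ gives $a_k=a_{k-(2s+1)}$ for all $k$, i.e.\ $\pi([a])\mid(2s+1)$. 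Taking $t=s$ then exhibits the odd stabilizer. This is the step your proof is missing.
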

   
    \begin{proof}
        $l$ has a stabilizer $r^k$ with $k\equiv 1\mod2$ under the $C_n$ action if and only if  $\phi([l]) = ([l_1],[l_1])$. Therefore, when $\phi([l]) = ([l_1],[l_1])$, the period of $[l]$ is exactly that of $[l_1]$. For the other case, the reason why there is a multiplier $2$ in front of $\mathrm{lcm}(\pi([l_1]),\pi([l_2]))$ is because any consecutive $\pi([l])$ number of beads consists of $\frac{\mathrm{lcm}(\pi([l_1]),\pi([l_2]))}{\pi([l_2])}$ copies of $[l_1]$ and $\frac{\mathrm{lcm}(\pi([l_1]),\pi([l_2]))}{\pi([l_1])}$ copies of $[l_2]$.
        \end{proof}
     Figure \ref{fig:examplephi2} provides an example of the second case. Since $\frac n2$ is odd, we have
    \[
    |\SoN^f| = |\SoNhalf^f|.
    \]
    Then, as a simple consequence,
    \begin{equation}\label{eqn:ob3}
    \begin{split}
        |\SeN^f_2| & = |\SN_2^f| - |\SoN^f| \\
        & = |\SN_2^f| - |\SoNhalf^f|.
    \end{split}
    \end{equation}
    \begin{proposition}\label{prop:oevenfixvalue}
      \begin{align*}
        |\SeN^f|  
        =  \left\{
        \begin{array}{cc}
            \binom{\frac n2}{\frac j2} - \binom{\frac{n-2}{4}}{\frac{j-2}{4}}, & j\equiv 2\mod4, \\
           \binom{\frac n2}{\frac j2}-\binom{\frac{n-2}{4}}{\frac{j}{4}},  & j\equiv 0\mod4.
        \end{array}\right.
    \end{align*}   
    \end{proposition}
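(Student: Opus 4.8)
The plan is to reduce $|\SeN^f|$, in the case $\nu_2(n)=1$ with $j$ even, to two purely combinatorial inputs about $m$-bead necklaces, where $m:=\tfrac n2$ is odd: the cardinalities $B_b:=|\Orb(C_m,\Neck(m,b))^f|$ for all $b$, and a Vandermonde-type convolution for $\sum_{b_1+b_2=j}B_{b_1}B_{b_2}$. The starting point is the corollary following Lemma~\ref{lem:intersection}, which gives $|\SeN^f|=|\SeN_1^f|+|\SeN_2^f|$, together with Proposition~\ref{prop:S1} and equations \eqref{eqn:ob2}, \eqref{eqn:ob3}. A first simplification that I would record immediately: since every $C_m$-orbit has cardinality dividing the odd integer $m$, every such orbit has odd period, so $\SoNhalf=\SNhalf$ as sets; in particular $|\SoNhalf^f|=|\SNhalf^f|=B_{j/2}$ and $|\SeNhalf^f|=0$.

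First I would compute $B_b$. By Lemma~\ref{lem:2axes}, since $\tfrac{m}{\pi([l])}$ is always odd, every flip-symmetric orbit of $\Neck(m,b)$ has a unique symmetry axis, and (as $m$ is odd) it is of type $2$, passing through a single bead. I would then verify that sending a necklace $l\in\Neck(m,b)$ symmetric about the fixed vertical axis $\sigma_0$ through the designated top bead to its orbit $[l]$ is a bijection onto $\Orb(C_m,\Neck(m,b))^f$: surjectivity follows by rotating the unique axis of an arbitrary flip-symmetric orbit onto $\sigma_0$, and injectivity follows because if $l_1,l_2$ are both $\sigma_0$-symmetric with $l_2=r^k l_1$, then $l_1$ is symmetric about both $\sigma_0$ and $r^{-k}\sigma_0$, so uniqueness of the axis as a $C_m$-orbit forces $r^t l_1=l_1$ and $r^{t+k}\sigma_0=\sigma_0$ for some $t$; as no nontrivial rotation fixes a line when $m$ is odd, $t\equiv -k$ and hence $l_2=l_1$. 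Since a $\sigma_0$-symmetric necklace is exactly a choice of color for the top bead together with a choice of colors for the $\tfrac{m-1}{2}$ reflected pairs of remaining beads, this yields $B_b=\binom{(m-1)/2}{\lfloor b/2\rfloor}=\binom{(n-2)/4}{\lfloor b/2\rfloor}$. Specializing, $|\SoNhalf^f|=B_{j/2}$ equals $\binom{(n-2)/4}{(j-2)/4}$ when $j\equiv 2\bmod 4$ and $\binom{(n-2)/4}{j/4}$ when $j\equiv 0\bmod 4$ (this also recovers Lemma~\ref{lem:ooddvalue} in the relevant range).

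Next I would evaluate $\sum_{b_1+b_2=j}B_{b_1}B_{b_2}$. Since $j$ is even, $b_1$ and $b_2$ have the same parity; splitting the sum into its even-$b_1$ and odd-$b_1$ parts and applying Vandermonde's identity to each part gives $\binom{m-1}{j/2}+\binom{m-1}{j/2-1}$, which by Pascal's rule equals $\binom{m}{j/2}=\binom{n/2}{j/2}$. Now I feed this into \eqref{eqn:ob2} after isolating the diagonal term $b_1=b_2=j/2$ of the symmetric product and using $|\SNhalf^f|=B_{j/2}$: this gives $|\SN_2^f|=\tfrac12\binom{n/2}{j/2}+\tfrac12 B_{j/2}$, hence $|\SeN_2^f|=\tfrac12\binom{n/2}{j/2}-\tfrac12 B_{j/2}$ by \eqref{eqn:ob3}. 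Proposition~\ref{prop:S1} together with $|\SoNhalf^f|=B_{j/2}$ gives the identical value $|\SeN_1^f|=\tfrac12\binom{n/2}{j/2}-\tfrac12 B_{j/2}$. Adding, $|\SeN^f|=\binom{n/2}{j/2}-B_{j/2}$, and substituting the two cases for $B_{j/2}$ from the previous step produces the asserted formula.

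I expect the main obstacle to be the bijection in the first step — specifically, making the uniqueness-of-symmetry-axis bookkeeping precise (the claim that two distinct necklaces of a single $C_m$-orbit cannot both be symmetric about $\sigma_0$), since this is exactly where oddness of $m$ is genuinely used and where one could easily miscount by a factor equal to the period. Everything after that is formal: a Vandermonde--Pascal manipulation and the bookkeeping that reassembles $|\SeN_1^f|$, $|\SeN_2^f|$, $|\SN_2^f|$, and $|\SoNhalf^f|$ into the closed form.
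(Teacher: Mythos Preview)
Your proof is correct and follows essentially the same route as the paper: both combine the decomposition $|\SeN^f|=|\SeN_1^f|+|\SeN_2^f|$ with Proposition~\ref{prop:S1}, equations \eqref{eqn:ob2}--\eqref{eqn:ob3}, the observation $\SoNhalf^f=\SNhalf^f$ (since $m=n/2$ is odd), and the Vandermonde--Pascal evaluation of $\sum_{b_1+b_2=j}B_{b_1}B_{b_2}=\binom{m}{j/2}$. The only cosmetic difference is that you re-derive $B_b=\binom{(m-1)/2}{\lfloor b/2\rfloor}$ via an explicit bijection with $\sigma_0$-symmetric necklaces, whereas the paper invokes Lemma~\ref{lem:ooddvalue}; your bookkeeping (isolating $|\SN_2^f|=\tfrac12\binom{m}{j/2}+\tfrac12 B_{j/2}$ first) is in fact slightly cleaner than the paper's intermediate display.
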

    \begin{proof}

    Since $\frac n2$ is odd,  we have $\SoNhalf^f = \SNhalf^f$. Therefore, combining \eqref{eqn:ob2}, \eqref{eqn:ob3}, and Proposition \ref{prop:S1}, and noting Lemma \ref{lem:intersection}, we have
    \begin{align*}
        |\SeN^f|  = & |\SeN_1^f| + |\SeN_2^f| \\
        = &\frac{1}{2} \sum_{j_1+j_2=j,j_1< j_2}|\Orb(C_{\frac n2},\Neck(\frac n2,j_1))^f|\cdot |\Orb(C_{\frac n2},\Neck(\frac n2,j_2))^f|\\
        &+\frac 12\left(\binom{\frac n2}{\frac j2} -|\SNhalf^f|\right).
    \end{align*}

    Since $j$ is even, we have $j_1\equiv j_2\mod 2$. As $\frac{n}{2}$ is odd, we further have $\nu_2(j_i)>\nu_2(n)$, $i=1,2$ or $\nu_2(j_i)=\nu_2(n)$, $i=1,2$. Thus by Lemma \ref{lem:ooddvalue}, we have
    \begin{align*}
        & \frac{1}{2} \sum_{j_1+j_2=j,j_1< j_2}|\Orb(C_{\frac n2},\Neck(\frac n2,j_1))^f|\cdot |\Orb(C_{\frac n2},\Neck(\frac n2,j_2))^f|\\
        = &\frac12 \sum_{i=0}^{\frac j2-1}\binom{\frac{n-2}{4}}{i}\binom{\frac{n-2}{4}}{\frac j2-1-i}+\frac12\sum_{i=0}^{\frac j2}\binom{\frac{n-2}{4}}{i}\binom{\frac{n-2}{4}}{\frac j2-i}\\
        = & \frac12\left(\binom{\frac{n}{2}-1}{\frac j2-1}+\binom{\frac n2-1}{\frac j2}\right) = \frac12\binom{\frac n2}{\frac j2},
    \end{align*}
    where in the last line we have used the Pascal identity.  Thus
    \begin{align*}
        |\SeN^f|  = & \frac12\left(\binom{\frac n2}{\frac j2}+\binom{\frac n2}{\frac j2}\right)-|\SNhalf^f|\\
        = & \left\{
        \begin{array}{cc}
            \binom{\frac n2}{\frac j2} - \binom{\frac{n-2}{4}}{\frac{j-2}{4}}, & j\equiv 2\mod4, \\
           \binom{\frac n2}{\frac j2}-\binom{\frac{n-2}{4}}{\frac{j}{4}},  & j\equiv 0\mod4.
        \end{array}\right.
    \end{align*}
        \end{proof}  
    \begin{proposition}\label{prop:casenu2n1nu2jgeq1}
     For $\nu_2(n)=1$ and $\nu_2(j)\geq1$, we have  $\Delta(n,j) =0$.
\end{proposition}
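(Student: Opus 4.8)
The plan is to reduce, exactly as in the other even cases, to showing that $|GO_{even}(L/k,j)| = |\SeN|$ is even: by Lemma~\ref{lem:deltaGoeven} we have $\Delta(n,j) = (u-1)\cdot|GO_{even}(L/k,j)|$, and since $2(u-1)=0$ in $\GW(\bF_q)$ (because $2-2u=0$ there), evenness of $|GO_{even}(L/k,j)|$ forces $\Delta(n,j)=0$. Since $\nu_2(n)\geq 1$, the corollary following Lemma~\ref{lem:intersection} gives $|\SeN|\equiv|\SeN^f|\bmod 2$, so it suffices to check that $|\SeN^f|$ is even.

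Because $\nu_2(n)=1$ and $\nu_2(j)\geq 1$, Proposition~\ref{prop:oevenfixvalue} applies and reduces the claim to a parity comparison of binomial coefficients: I must show that $\binom{n/2}{j/2}$ has the same parity as $\binom{(n-2)/4}{(j-2)/4}$ when $j\equiv 2\bmod 4$, and the same parity as $\binom{(n-2)/4}{j/4}$ when $j\equiv 0\bmod 4$.

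The key step is to verify these two coincidences using the $2$-adic valuation corollaries of Kummer's theorem. If $j\equiv 2\bmod 4$, then $n/2$ and $j/2$ are both odd, and writing $n/2 = 2\cdot\tfrac{n-2}{4}+1$, $j/2 = 2\cdot\tfrac{j-2}{4}+1$, Corollary~\ref{cor:corKummer1} gives $\nu_2\binom{n/2}{j/2}=\nu_2\binom{(n-2)/4}{(j-2)/4}$; in particular the two coefficients are simultaneously odd or simultaneously even, so their difference is even. If $j\equiv 0\bmod 4$, then $n/2$ is odd and $j/2$ is even; applying Corollary~\ref{cor:corKummer2} to the even pair $\big(\tfrac{n-2}{2},\tfrac{j}{2}\big)$ gives $\nu_2\binom{n/2}{j/2}=\nu_2\binom{(n-2)/2}{j/2}$, and then Corollary~\ref{cor:corKummer1} gives $\nu_2\binom{(n-2)/2}{j/2}=\nu_2\binom{(n-2)/4}{j/4}$, so again the two coefficients have the same parity. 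In both cases the difference appearing in Proposition~\ref{prop:oevenfixvalue} is even, hence $|\SeN^f|$ is even, and therefore $\Delta(n,j)=0$.

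I do not expect a serious obstacle: once Proposition~\ref{prop:oevenfixvalue} is available the argument is pure $2$-adic bookkeeping, and the only care needed is in matching the residue of $j$ modulo $4$ with the correct corollary and keeping track of which arguments are even versus odd.
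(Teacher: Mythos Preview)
Your proposal is correct and follows essentially the same route as the paper: invoke Proposition~\ref{prop:oevenfixvalue} and then check, via the $2$-adic corollaries of Kummer's theorem, that the two binomial coefficients in each case have equal $2$-adic valuation, so their difference is even. Your write-up is in fact slightly more careful than the paper's, since you explicitly invoke Corollary~\ref{cor:corKummer2} in the $j\equiv 0\bmod 4$ case (for the step $\nu_2\binom{n/2}{j/2}=\nu_2\binom{(n-2)/2}{j/2}$), whereas the paper only cites Corollary~\ref{cor:corKummer1} for both cases.
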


\begin{proof}

    By the Corollary \ref{cor:corKummer1} of Kummer's theorem, for $j\equiv 2\mod4$, we have
    \[
    \nu_2\binom{\frac{n-2}{4}}{\frac{j-2}{4}} = \nu_2\binom{\frac{n}{2}-1}{\frac j2-1}=\nu_2\binom{\frac{n}{2}}{\frac j2},
    \]
    and for $j\equiv 0\mod4$,
    \[
    \nu_2\binom{\frac{n-2}{4}}{\frac{j}{4}} = \nu_2\binom{\frac{n}{2}-1}{\frac j2}=\nu_2\binom{\frac{n}{2}}{\frac j2}.
    \]
    By Proposition \ref{prop:oevenfixvalue},
    \[
    |\SeN^f|\equiv 0\mod2.
    \]
    Finally, by Lemma \ref{lem:deltaGoeven},
\[
\Delta(n,j) = (u-1)\cdot|\SeN^f| =0.
\]
\end{proof}

\subsection{Proof of Theorem \ref{thm:closedformula-1} when $\nu_2(n)\geq 2$ and $\nu_2(j)\geq 1$}\label{subsection:case3}
Since both $n$ and $j$ are even, for any $[l]\in \SN_2^f$ there are two beads on any type 2 symmetry axis, and they have to be of the same color.   Denote $[l']$ as the subset formed by removing the two beads on a chosen type 2 symmetry axis for $l\in [l]$. If the two beads are blue, then
\[
[l']\in \Orb(C_{n-2},\Neck(n-2,j-2))_1^f.
\]
Otherwise, the two beads are red and
\[
[l']\in \Orb(C_{n-2},\Neck(n-2,j))_1^f.
\]

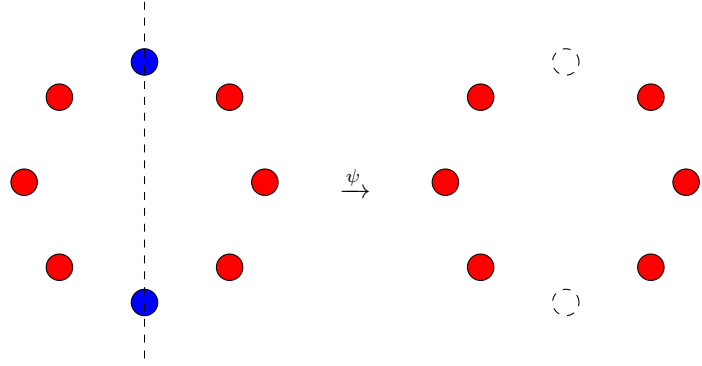
\begin{figure}
\centering

\begin{tikzpicture}[scale=0.8]
    \begin{scope}[shift={(0, 0)}]
        \def\radius{2cm}

        \def\numBeads{8}

        \foreach \i in {1,...,\numBeads} {
            \pgfmathsetmacro{\angle}{360/\numBeads * (\i +1)}
            
            \ifnum\i=1
                \node[draw, fill=blue, circle, minimum size=10pt] at (\angle:\radius) {};
            \else
                \ifnum\i=5
                    \node[draw, fill=blue, circle, minimum size=10pt] at (\angle:\radius) {};
                \else
                    \node[draw, fill=red, circle, minimum size=10pt] at (\angle:\radius) {};
                \fi
            \fi
        }

        \draw[dashed] (90:\radius*1.5) -- (270:\radius*1.5);
    \end{scope}
    \node at (3.5, 0) { $\xrightarrow{\psi}$};
    \begin{scope}[shift={(7, 0)}]
        \def\radius{2cm}

        \def\numBeads{8}

        \foreach \i in {1,...,\numBeads} {
            \pgfmathsetmacro{\angle}{360/\numBeads * (\i - 1)}
            
            \ifnum\i=3
                \node[draw, dashed, circle, minimum size=10pt] at (\angle:\radius) {};
            \else
                \ifnum\i=7
                    \node[draw, dashed, circle, minimum size=10pt] at (\angle:\radius) {};
                \else
                    \node[draw, fill=red, circle, minimum size=10pt] at (\angle:\radius) {};
                \fi
            \fi
        }
    \end{scope}

\end{tikzpicture}
\caption{By removing the two beads on the symmetry axis, $\psi$ maps an element in $\Orb(C_{8},\Neck(8,2))^f_2$ to $\Orb(C_{6},\Neck(6,0))^f_1$.}\label{fig:examplepsi}
\end{figure}

Now, if we start with $[l']\in \Orb(C_{n-2},\Neck(n-2,j-2))_1^f$, then we can recover an element in $\SN_2^f$ by picking a symmetry axis for $l'\in[l']$ and adding back two blue beads. We can also do a similar procedure for $[l']\in \Orb(C_{n-2},\Neck(n-2,j))_1^f$ by adding two red beads. However, the associated map
\[
\psi:\Orb(C_{n-2},\Neck(n-2,j-2))_1^f\amalg \Orb(C_{n-2},\Neck(n-2,j))_1^f \to \SN^f_2
\]
is surjective but not injective\footnote{For an example of how does the map $\psi$ works, see Figure \ref{fig:examplepsi}.
}.  In other words, if we count $\SN^f_2$ via counting the domain, then there will be over-counting. Figure \ref{fig:examplepsi} and Figure \ref{fig:examplepsi2} provide an example of the over-counting phenomenon. To understand the fibers of $\psi$, i.e., the over-counting, we first notice the following.
\begin{lemma}\label{lem:analysisofaxes}
    Every element $[l]$ in $\Orb(C_{n-2},\Neck(n-2,j-2))_1^f\amalg \Orb(C_{n-2},\Neck(n-2,j))_1^f$ has exactly one type 1 symmetry axis if $\pi([l])$ is even, and two symmetry axes of different type if $\pi([l])$ is odd.
\end{lemma}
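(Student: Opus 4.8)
The plan is to reduce the statement to the structure lemma about symmetry axes already proved, namely Lemma \ref{lem:2axes}, together with the characterization of type 1 versus type 2 axes. First I would fix notation: write $m = n-2$, which is $\equiv 0 \bmod 4$ by the hypothesis $\nu_2(n)\geq 2$, and let $[l]$ be an element of $\Orb(C_m,\Neck(m,k))_1^f$ where $k$ is either $j-2$ or $j$. By definition of the subscript $1$, such an $[l]$ admits at least one type 1 symmetry axis, i.e.\ a symmetry axis whose line misses all of $\Phi(D_m)\cdot x$. So the content of the lemma is entirely about how many \emph{other} symmetry axes there can be, and of which type.

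The key step is to invoke Lemma \ref{lem:2axes}: either $\frac{m}{\pi([l])}$ is odd, in which case $[l]$ has a unique symmetry axis (necessarily the given type 1 one), or $\frac{m}{\pi([l])}$ is even, in which case there are exactly two distinct symmetry axes at distance $\frac{\pi([l])}{2}$. I would then split on the parity of $\pi([l])$. If $\pi([l])$ is odd: since $m \equiv 0 \bmod 4$, $\frac{m}{\pi([l])}$ is even, so Lemma \ref{lem:2axes} gives exactly two symmetry axes whose distance $\frac{\pi([l])}{2}$ is a genuine half-integer; by the argument in the proof of Lemma \ref{lem:intersection} (a half-integer distance forces the two axes to have different type, since between consecutive elements of $\Phi(D_m)\cdot x$ and the midpoints there is an alternation), the two axes have different types, so exactly one is type 1 and one is type 2 — this is the "two symmetry axes of different type" conclusion. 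If $\pi([l])$ is even, then $\pi([l]) \mid m$ and $m/\pi([l])$ could be odd or even; but here I need that the axis is type 1, which pins things down: a type 1 axis occurs exactly when $\sigma$ passes through none of the beads' positions, and one checks from the explicit rotation action that for $m$ even this happens precisely when the reflection is "between beads," and then the second axis predicted by Lemma \ref{lem:2axes} (if $m/\pi([l])$ were even) would be at integer distance $\frac{\pi([l])}{2}$, hence \emph{also} type 1 — and two distinct type 1 axes would have their composite reflection be a rotation by an integer multiple of $\frac{2\pi}{m}$, forcing via the period bound that in fact $m/\pi([l])$ is odd, a contradiction. Hence when $\pi([l])$ is even the type 1 axis is unique.

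The main obstacle I anticipate is the bookkeeping in the even-period case: one has to rule out the configuration of two distinct type 1 axes, and this requires being careful about whether the "distance $\frac{\pi([l])}{2}$" from Lemma \ref{lem:2axes} is an integer or a half-integer, and translating that into the type (which is governed by whether $\sigma$ meets $\Phi(D_m)\cdot x$). The cleanest way to package this is probably to prove the auxiliary observation that \emph{two symmetry axes of the same type are always at integer distance, and two of different type at half-integer distance} — this is essentially the content already extracted in the proof of Lemma \ref{lem:intersection} — and then the lemma follows by pure arithmetic from Lemma \ref{lem:2axes} and the hypothesis $4 \mid m$: odd period $\Rightarrow$ $m/\pi$ even $\Rightarrow$ two axes at half-integer distance $\Rightarrow$ different types; even period with $m/\pi$ even would give a second axis at integer distance, hence again type 1, but then one shows such an orbit cannot be in the type-1 stratum with even period (or more simply, one notes the two type-1 axes would coincide), so $m/\pi$ must be odd and the axis is unique. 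I would present it in that order, doing the same-type/different-type dichotomy first as a short lemma or inline observation, then the two-line case analysis.
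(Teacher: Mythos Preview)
There is a concrete arithmetic slip that derails the even-period case. You write ``$m = n-2$, which is $\equiv 0 \bmod 4$ by the hypothesis $\nu_2(n)\geq 2$,'' but this is backwards: $\nu_2(n)\geq 2$ means $4\mid n$, so $n-2\equiv 2\bmod 4$, i.e.\ $\nu_2(m)=1$. This is exactly the fact the paper uses, and it is what makes the lemma short: since $\nu_2(m)=1$, the parity of $\pi([l])$ and of $m/\pi([l])$ are \emph{opposite}. Thus $\pi([l])$ even forces $m/\pi([l])$ odd, and Lemma~\ref{lem:2axes} immediately gives a unique symmetry axis (which must be the type~1 one already present). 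No further argument is needed in that case.

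Because of the slip, you believed ``$m/\pi([l])$ could be odd or even'' when $\pi([l])$ is even, and tried to rule out the two-type-1-axes scenario by a period argument. That argument does not go through: two type~1 axes at distance $d=\pi([l])/2$ compose to a rotation by $\pi([l])$ beads, which fixes $l$ tautologically, so no contradiction with the period arises. (And indeed, for an $m$ genuinely divisible by $4$ such configurations can occur---they just do not lie in the set under consideration here because $m=n-2\not\equiv 0\bmod 4$.) Your odd-period case is fine: $m$ even and $\pi([l])$ odd give $m/\pi([l])$ even regardless of $\nu_2(m)$, so Lemma~\ref{lem:2axes} yields two axes at half-integer distance $\pi([l])/2$, hence of different types by the observation you cite from Lemma~\ref{lem:intersection}. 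Fix the parity of $m$ and the whole proof collapses to the two-line case split the paper gives.
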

\begin{proof}
Take $[l]\in \Orb(C_{n-2},\Neck(n-2,j-2))_1^f\amalg \Orb(C_{n-2},\Neck(n-2,j))_1^f]$. By Lemma \ref{lem:2axes}, if $\frac{n-2}{\pi([l])}$ is odd, then $[l]$ has exactly one symmetry axis, which has to pass between beads as $\pi([l])$ is even.
On the other hand, if $\frac{n-2}{\pi([l])}$ is even, since $n-2\equiv 2\mod 4$, $\pi([l])$ is odd, thus the two axes are of a different type, as their distance is a half-integer.
\end{proof}

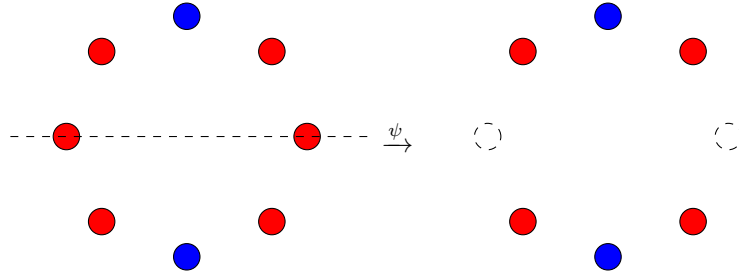
\begin{figure}
\centering

\begin{tikzpicture}[scale=0.8]
    \begin{scope}[shift={(0, 0)}]
        \def\radius{2cm}

        \def\numBeads{8}

        \foreach \i in {1,...,\numBeads} {
            \pgfmathsetmacro{\angle}{360/\numBeads * (\i +1)}
            
            \ifnum\i=1
                \node[draw, fill=blue, circle, minimum size=10pt] at (\angle:\radius) {};
            \else
                \ifnum\i=5
                    \node[draw, fill=blue, circle, minimum size=10pt] at (\angle:\radius) {};
                \else
                    \node[draw, fill=red, circle, minimum size=10pt] at (\angle:\radius) {};
                \fi
            \fi
        }

        \draw[dashed] (0:\radius*1.5) -- (180:\radius*1.5);
    \end{scope}
    \node at (3.5, 0) { $\xrightarrow{\psi}$};
    \begin{scope}[shift={(7, 0)}]
        \def\radius{2cm}

        \def\numBeads{8}

        \foreach \i in {1,...,\numBeads} {
            \pgfmathsetmacro{\angle}{360/\numBeads * (\i + 1)}
            
            \ifnum\i=3
                \node[draw, dashed, circle, minimum size=10pt] at (\angle:\radius) {};
            \else
                \ifnum\i=7
                    \node[draw, dashed, circle, minimum size=10pt] at (\angle:\radius) {};
             \else
                    \ifnum\i=5
                \node[draw, fill=blue, circle, minimum size=10pt] at (\angle:\radius) {};
            \else
                \ifnum\i=1
                    \node[draw, fill=blue, circle, minimum size=10pt] at (\angle:\radius) {};
                \else
                    \node[draw, fill=red, circle, minimum size=10pt] at (\angle:\radius) {};
                \fi
            \fi
            \fi
            \fi
        }
    \end{scope}

\end{tikzpicture}
\caption{The element in Figure \ref{fig:examplepsi} has another symmetry axis, which maps to  $\Orb(C_{6},\Neck(6,2))^f_1$ under $\psi$.}\label{fig:examplepsi2}
\end{figure}

The only elements $[l]\in \SN_2^f$ that have $|\psi^{-1}([l])|=2$ are the ones with two different axes both passing through two beads. Viewing on the domain, these are the elements in $\Orb(C_{n-2},\Neck(n-2,j-2))_1^f$ and  $\Orb(C_{n-2},\Neck(n-2,j))_1^f$ that have two axes of different types. By Lemma \ref{lem:analysisofaxes}, these are exactly the elements that have an odd period. 

The overcounting is not simply
\[
|\Orb_{odd}(C_{n-2},\Neck(n-2,j-2))_1^f|+| \Orb_{odd}(C_{n-2},\Neck(n-2,j))_1^f|
\]
as there would be an over-counting of the over-counting. This happens in the following situation. Denote the two symmetry axes for $[l]$ in the domain with $n-2$ beads as $[(l,\sigma_1)]$, $[(l,\sigma_2)]$, which is of type 1 and type 2, respectively. After adding the two beads along $[(l,\sigma_1)]$, the two symmetry axes $[(l,\sigma_1)]$, $[(l,\sigma_2)]$ will both become type 2 symmetry axis in $\psi([l])\in \SN^f_2$. Therefore, the over-counting of over-counting will happen if $[(l,\sigma_1)]$, $[(l,\sigma_2)]$ become the same axis in $\psi([l])$. A little thought combined with Lemma \ref{lem:2axes} shows this will happen if and only if $\psi([l])\in \SoN^f$. For example, if one wants to construct an element in $\Orb(C_8,\Neck(8,4))^f_2$ from the right-hand side of Figure \ref{fig:examplepsi2}, then one needs to add two blue beads to the vacant spots. This becomes a new symmetry axis. However, as shown in Figure \ref{fig:examplepsi3}, this axis is the same as the dashed one from the left-hand side.

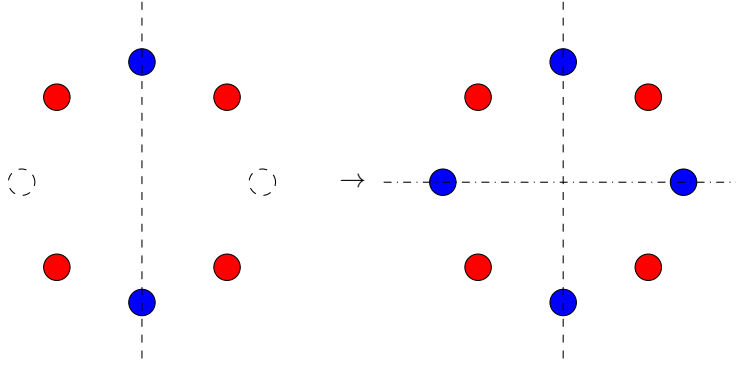
\begin{figure}
\centering

\begin{tikzpicture}[scale=0.8]
    \begin{scope}[shift={(0, 0)}]
        \def\radius{2cm}

        \def\numBeads{8}

        \foreach \i in {1,...,\numBeads} {

            \pgfmathsetmacro{\angle}{360/\numBeads * (\i + 1)}
            
            \ifnum\i=3
                \node[draw, dashed, circle, minimum size=10pt] at (\angle:\radius) {};
            \else
                \ifnum\i=7
                    \node[draw, dashed, circle, minimum size=10pt] at (\angle:\radius) {};
             \else
                    \ifnum\i=5
                \node[draw, fill=blue, circle, minimum size=10pt] at (\angle:\radius) {};
            \else
                \ifnum\i=1
                    \node[draw, fill=blue, circle, minimum size=10pt] at (\angle:\radius) {};
                \else
                    \node[draw, fill=red, circle, minimum size=10pt] at (\angle:\radius) {};
                \fi
            \fi
            \fi
            \fi
        }
\draw[dashed] (90:\radius*1.5) -- (270:\radius*1.5);

    \end{scope}
    \node at (3.5, 0) { $\to$};
    \begin{scope}[shift={(7, 0)}]

        \def\radius{2cm}

        \def\numBeads{8}

        \foreach \i in {1,...,\numBeads} {
            \pgfmathsetmacro{\angle}{360/\numBeads * (\i + 1)}
            
            \ifnum\i=3
                \node[draw, fill=blue, circle, minimum size=10pt] at (\angle:\radius) {};
            \else
                \ifnum\i=7
                    \node[draw, fill=blue, circle, minimum size=10pt] at (\angle:\radius) {};
             \else
                    \ifnum\i=5
                \node[draw, fill=blue, circle, minimum size=10pt] at (\angle:\radius) {};
            \else
                \ifnum\i=1
                    \node[draw, fill=blue, circle, minimum size=10pt] at (\angle:\radius) {};
                \else
                    \node[draw, fill=red, circle, minimum size=10pt] at (\angle:\radius) {};
                \fi
            \fi
            \fi
            \fi
        }
        \draw[dashed] (90:\radius*1.5) -- (270:\radius*1.5);
\draw[dash dot] (0:\radius*1.5) -- (180:\radius*1.5);

    \end{scope}

\end{tikzpicture}
\caption{An example of the over-counting of over-counting.}\label{fig:examplepsi3}
\end{figure}

\begin{proposition}\label{prop:orb22}
    For $\nu_2(n)\geq 2$, and $\nu_2(j)\geq 1$, we have
    \[
    |\SN_2^f| = \frac{1}{2}\binom{\frac n2}{\frac j2}+ \frac12 |\SoN^f|.
    \]
\end{proposition}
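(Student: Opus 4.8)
The plan is to count $|\SN_2^f|$ by using the surjection $\psi$ from Section~\ref{subsection:case3} and carefully subtracting the over-counting, following the discussion preceding the statement. First I would write
\[
|\SN_2^f| = |\Orb(C_{n-2},\Neck(n-2,j-2))_1^f| + |\Orb(C_{n-2},\Neck(n-2,j))_1^f| - (\text{correction}),
\]
where the correction accounts for the fibers of $\psi$ of size $2$. By Lemma~\ref{lem:analysisofaxes}, the elements of the domain with a fiber of size $2$ are exactly those of odd period, i.e. the elements of $\Orb_{odd}(C_{n-2},\Neck(n-2,j-2))_1^f$ and $\Orb_{odd}(C_{n-2},\Neck(n-2,j))_1^f$. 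Naively the correction is the sum of these two cardinalities, but as explained in the text there is an ``over-counting of the over-counting'': an element $[l]$ in the domain of odd period whose image lands in $\SoN^f$ has its two distinct axes (one of type 1, one of type 2) becoming a single type~2 axis in $\psi([l])$, so such $[l]$ should be counted once, not twice, in the correction. A little bookkeeping with Lemma~\ref{lem:2axes} shows the over-counted over-counting is precisely $|\SoN^f|$ (each orbit in $\SoN^f$ arises this way from exactly two elements of the domain that were otherwise counted twice, contributing a net correction of $-|\SoN^f|$ to the correction term, hence $+|\SoN^f|$ back). So
\[
|\SN_2^f| = |\Orb(C_{n-2},\Neck(n-2,j-2))_1^f| + |\Orb(C_{n-2},\Neck(n-2,j))_1^f| - |\Orb_{odd}(C_{n-2},\Neck(n-2,j-2))_1^f| - |\Orb_{odd}(C_{n-2},\Neck(n-2,j))_1^f| + |\SoN^f|.
\]

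Next I would replace each ``type 1, full count minus odd count'' by an even count: since $|\SeN'_1{}^f| = |\Orb(C_{n-2},\Neck(n-2,\bullet))_1^f| - |\Orb_{odd}(C_{n-2},\Neck(n-2,\bullet))_1^f|$, the first four terms collapse to
\[
|\Orb_{even}(C_{n-2},\Neck(n-2,j-2))_1^f| + |\Orb_{even}(C_{n-2},\Neck(n-2,j))_1^f|.
\]
Now apply Proposition~\ref{prop:S1} with $n$ replaced by $n-2$ (valid since $\nu_2(n)\geq 2$ forces $\nu_2(n-2)\geq 1$): each even type~1 count equals $\tfrac12\big(\binom{(n-2)/2}{\bullet/2} - |\Orb_{odd}(C_{(n-2)/2},\Neck(\tfrac{n-2}{2},\tfrac{\bullet}{2}))^f|\big)$. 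Summing the two ($\bullet = j-2$ and $\bullet = j$) and using the Pascal identity $\binom{(n-2)/2}{(j-2)/2} + \binom{(n-2)/2}{j/2} = \binom{n/2}{j/2}$ gives $\tfrac12\binom{n/2}{j/2}$ minus $\tfrac12$ of the sum of the two odd type~1 counts on the $\tfrac{n-2}{2}$-bead necklaces. The final step is to identify that remaining sum of odd-count terms together with the leftover $+|\SoN^f|$ as exactly $\tfrac12|\SoN^f|$; this should follow from Lemma~\ref{lem:ooddvalue} and the decomposition $\phi$ on the half-size necklaces (relating $\SoN^f$ with $\nu_2(n-2)\geq 1$, i.e. $\tfrac{n-2}{2}$ odd, to the odd type~1 orbits on $(n-2)$-bead necklaces), or more directly from a bijective argument grouping odd-period symmetric orbits.

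The main obstacle I expect is the bookkeeping of the ``over-counting of the over-counting'': getting the correction term exactly right requires being precise about which elements of the disjoint union $\Orb(C_{n-2},\Neck(n-2,j-2))_1^f \amalg \Orb(C_{n-2},\Neck(n-2,j))_1^f$ map to the same element of $\SN_2^f$, how the two type-1/type-2 axes of an odd-period element interact when two beads are inserted, and verifying that the net effect is exactly $+|\SoN^f|$ rather than some other multiple. The algebraic end — Pascal's identity and reindexing the $\SeN'_1{}^f$ counts via Proposition~\ref{prop:S1} — is routine once the combinatorial identity is pinned down. I would double-check the final collapse to $\tfrac12|\SoN^f|$ on small cases (e.g. $n=8$, $j=2$ and $j=4$, matching Figures~\ref{fig:examplepsi}--\ref{fig:examplepsi3}) before committing to it.
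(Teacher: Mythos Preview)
Your overall approach matches the paper's, but the correction term is off by a factor of $\tfrac12$, and this is a genuine gap, not just bookkeeping.

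The issue is in how you pass from ``odd-period domain elements'' to the over-count. If $[l]\in\SN_2^f$ has two distinct type~2 axes (the fiber-2 case), then \emph{both} preimages in the domain have odd period --- so each such $[l]$ contributes $2$ to $C+D$, not $1$. Conversely, an element $[l]\in\SoN^f$ (odd period in the codomain) has exactly \emph{one} type~2 axis and hence exactly \emph{one} preimage, which also has odd period; so each such $[l]$ contributes $1$ to $C+D$. (Your parenthetical ``each orbit in $\SoN^f$ arises this way from exactly two elements of the domain'' is therefore wrong.) Putting this together, $C+D = 2\cdot|\{\text{fiber-2 elements}\}| + E$, so the over-count is $\tfrac12(C+D-E)$, and the correct identity is
\[
|\SN_2^f| \;=\; A+B - \tfrac12\bigl(C+D-E\bigr),
\]
not $A+B-(C+D)+E$.

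Your suggested sanity check at $n=8$, $j=4$ already exposes this: one computes $A=B=2$, $C=D=1$, $E=0$, so your formula gives $2$ while the paper's gives $3$; direct enumeration of $\Orb(C_8,\Neck(8,4))_2^f$ yields $\{(1,2,1,4),(1,1,3,3),(2,2,2,2)\}$, i.e.\ $3$. After inserting the missing $\tfrac12$, the rest of your outline (apply Proposition~\ref{prop:S1}, use Pascal, and cancel the odd-orbit terms via the identity $\Orb_{odd}(C_{n-2},\Neck(n-2,\bullet))^f = \Orb_{odd}(C_{(n-2)/2},\Neck((n-2)/2,\bullet/2))^f$ from the proof of Lemma~\ref{lem:ooddvalue}) goes through exactly as in the paper.
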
   
\begin{proof}
From the earlier argument in this section, we have
    \begin{align*}
        |\SN_2^f|  = & |\Orb(C_{n-2},\Neck(n-2,j-2))_1^f|+| \Orb(C_{n-2},\Neck(n-2,j))_1^f| \\
        & - \frac12\left(|\Orb_{odd}(C_{n-2},\Neck(n-2,j-2))_1^f|+| \Orb_{odd}(C_{n-2},\Neck(n-2,j))_1^f|\right.\\
        & -  \left.|\SoN^f|\right).  \\
        \end{align*}
        Using the facts $|\Orb(C_{n-2},\Neck(n-2,j-2))_1^f| = |\Orb_{even}(C_{n-2},\Neck(n-2,j-2))_1^f|+|\Orb_{odd}(C_{n-2},\Neck(n-2,j-2))_1^f|$, $|\Orb(C_{n-2},\Neck(n-2,j))_1^f| = |\Orb_{even}(C_{n-2},\Neck(n-2,j))_1^f|+|\Orb_{odd}(C_{n-2},\Neck(n-2,j))_1^f|$, and applying Proposition \ref{prop:S1}, we get
        \begin{align*}
         |\SN_2^f|  = & \frac{1}{2}\left(\binom{\frac n2-1}{\frac j2-1}+\binom{\frac n2-1}{\frac j2}+|\Orb_{odd}(C_{\frac n2-1},\Neck(\frac n2-1,\frac j2-1))_1^f|\right. \\
         & + | \Orb_{odd}(C_{\frac n2-1},\Neck(\frac n2-1,\frac j2))_1^f|-|\Orb_{odd}(C_{n-2},\Neck(n-2,j-2))_1^f|\\
         &-\left.| \Orb_{odd}(C_{n-2},\Neck(n-2,j))_1^f| + |\SoN^f| \right)\\
         = & \frac{1}{2}\binom{\frac n2}{\frac j2}+ \frac12 |\SoN^f|
    \end{align*}
    where in the second identity, we have used the Pascal identity, and the same argument as in the first sentence of the proof of Lemma \ref{lem:ooddvalue}.
    \end{proof}

    \begin{proposition}\label{prop:case4}
 For $\nu_2(n)\geq 2$ and  $\nu_2(j)\geq1$, we have $\Delta(n,j) =0$.        
    \end{proposition}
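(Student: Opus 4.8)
The plan is to merge the counts of Proposition~\ref{prop:S1} and Proposition~\ref{prop:orb22} into a single closed formula for $|\SeN^f|$, to show that this integer is even, and then to deduce $\Delta(n,j)=0$ from Lemma~\ref{lem:deltaGoeven} together with the relation $2(u-1)=0$ in $\GW(\bF_q)$.

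First I would assemble the formula. Because $\nu_2(n)\geq 1$, the corollary following Lemma~\ref{lem:intersection} gives both $|GO_{even}(L/k,j)|=|\SeN|\equiv|\SeN^f|\pmod 2$ and $|\SeN^f|=|\SeN_1^f|+|\SeN_2^f|$, the latter because $\SeN_1^f\cap\SeN_2^f=\emptyset$. By Lemma~\ref{lem:intersection} every orbit of $\SoN^f$ carries a type~$2$ symmetry axis, so the odd-cardinality part of $\SN_2^f$ is exactly $\SoN^f$, whence $|\SeN_2^f|=|\SN_2^f|-|\SoN^f|$. Since $\nu_2(n)\geq 2$, Proposition~\ref{prop:orb22} applies and gives $|\SN_2^f|=\frac12\binom{\frac n2}{\frac j2}+\frac12|\SoN^f|$, hence $|\SeN_2^f|=\frac12\binom{\frac n2}{\frac j2}-\frac12|\SoN^f|$. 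Adding Proposition~\ref{prop:S1}, namely $|\SeN_1^f|=\frac12\big(\binom{\frac n2}{\frac j2}-|\SoNhalf^f|\big)$, and using $|\SoN^f|=|\SoNhalf^f|$ for $n,j$ both even (the reduction opening the proof of Lemma~\ref{lem:ooddvalue}), I obtain
\[
|\SeN^f|=\binom{\frac n2}{\frac j2}-|\SoN^f|.
\]

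The crux is to show $\binom{\frac n2}{\frac j2}\equiv|\SoN^f|\pmod 2$. Set $t=\min(\nu_2(n),\nu_2(j))$ (so $t\geq 1$), $m=n/2^{t}$, $m'=j/2^{t}$, so that exactly one of $m,m'$ is odd; applying Corollary~\ref{cor:corKummer1} $t-1$ times reduces $\nu_2\binom{\frac n2}{\frac j2}$ to $\nu_2\binom{m}{m'}$, so it suffices to compare $\binom{m}{m'}$ with the value of $|\SoN^f|$ supplied by Lemma~\ref{lem:ooddvalue}, modulo $2$. There are three regimes (all of which occur for $j\leq n$). If $\nu_2(j)<\nu_2(n)$, then $m$ is even and $m'$ odd, so $\binom{m}{m'}$ is even by Lucas's theorem (its units-digit factor is $\binom 01=0$), matching $|\SoN^f|=0$. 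If $\nu_2(j)=\nu_2(n)$, then $m,m'$ are odd, $|\SoN^f|=\binom{(m-1)/2}{(m'-1)/2}$, and expanding $\binom{m}{m'}=\binom{m-1}{m'}+\binom{m-1}{m'-1}$ by Pascal's identity and applying Corollary~\ref{cor:corKummer1} and the units-digit fact to the two terms gives $\binom{m}{m'}\equiv\binom{(m-1)/2}{(m'-1)/2}\pmod 2$. If $\nu_2(j)>\nu_2(n)$, then $m$ is odd, $m'$ even, $|\SoN^f|=\binom{(m-1)/2}{m'/2}$, and the identical Pascal-plus-Corollary~\ref{cor:corKummer1} argument gives $\binom{m}{m'}\equiv\binom{(m-1)/2}{m'/2}\pmod 2$. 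In every regime $\binom{\frac n2}{\frac j2}\equiv|\SoN^f|\pmod 2$, so $|\SeN^f|$ is even.

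Finally, the corollary following Lemma~\ref{lem:intersection} gives $|GO_{even}(L/k,j)|\equiv|\SeN^f|\equiv 0\pmod 2$, and Lemma~\ref{lem:deltaGoeven} yields $\Delta(n,j)=(u-1)\,|GO_{even}(L/k,j)|=0$, because $2(u-1)=2u-2=0$ in $\GW(\bF_q)$. I expect the main obstacle to be the middle step: one must keep track of which of $m,m'$ is odd in each regime and which entry of the Lemma~\ref{lem:ooddvalue} formula carries the half-integer shift, but once the reduction $\nu_2\binom{\frac n2}{\frac j2}=\nu_2\binom{m}{m'}$ is in place, each of the three cases is a short application of Pascal's identity together with Lucas's or Kummer's theorem.
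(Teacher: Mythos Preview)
Your proof is correct and follows essentially the same route as the paper: combine Proposition~\ref{prop:S1}, Proposition~\ref{prop:orb22} and Lemma~\ref{lem:intersection} to obtain $|\SeN^f|=\binom{n/2}{j/2}-|\SoN^f|$, then split into the three cases of Lemma~\ref{lem:ooddvalue} and check the parity in each via Lucas/Kummer. One cosmetic slip: you say ``exactly one of $m,m'$ is odd'', but in the regime $\nu_2(n)=\nu_2(j)$ both are odd---your case analysis nonetheless treats this correctly.
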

    \begin{proof}
Combining Lemma \ref{lem:intersection}, Proposition \ref{prop:S1}, and Proposition \ref{prop:orb22}, we have
\begin{align*}
    |\SeN^f| = \binom{\frac n2}{\frac j2} - |\SeN^f|.
\end{align*}
Now, apply Lemma \ref{lem:ooddvalue}, we have
\begin{enumerate}
    \item If $\nu_2(n)> \nu_2(j)$,
    \[
    |\SeN^f| = \binom{\frac n2}{\frac j2}.
    \]
    \item  If $\nu_2(n)= \nu_2(j)$,
    \[
    |\SeN^f| = \binom{\frac n2}{\frac j2}- \binom{\frac{n}{2^{\nu_2(n)+1}}-\frac12}{\frac{j}{2^{\nu_2(n)+1}}-\frac12}.
    \]
    \item If $\nu_2(n) <\nu_2(j)$,
    \[
    |\SeN^f|) = \binom{\frac n2}{\frac j2}-\binom{\frac{n}{2^{\nu_2(n)+1}}-\frac12}{\frac{j}{2^{\nu_2(n)+1}}}.
    \]
\end{enumerate}
In case (1), by Lucas's theorem, we have
\[
|\SeN^f| \equiv 0\mod 2.
\]
In case (2), by Corollary \ref{cor:corKummer1} of Kummer's theorem, 
\[
\nu_2\binom{\frac{n}{2^{\nu_2(n)+1}}-\frac12}{\frac{j}{2^{\nu_2(n)+1}}-\frac12} = \nu_2\binom{\frac{n}{2^{\nu_2(n)}}}{\frac{j}{2^{\nu_2(n)}}} = \nu_2\binom{\frac n2}{\frac j2}.
\]
Therefore, $
|\SeN^f| \equiv 0\mod 2.$ In case (3), since $\frac{j}{2^{\nu_2(n)}}$ is even, and $\frac{n}{2^{\nu_2(n)}}$ is odd, then by Corollary \ref{cor:corKummer1} and \ref{cor:corKummer2} of Kummer's theorem
\[
\nu_2\binom{\frac{n}{2^{\nu_2(n)}}-1}{\frac{j}{2^{\nu_2(n)}}} = \nu_2\binom{\frac{n}{2^{\nu_2(n)}}}{\frac{j}{2^{\nu_2(n)}}} = \nu_2\binom{\frac n2}{\frac j2}.
\]
Therefore, we  have
\[
|\SeN^f| \equiv 0\mod 2,
\]
and by Lemma \ref{lem:deltaGoeven},
\[
\Delta(n,j)=(u-1)\cdot|\SeN^f|=0.
\]
\end{proof}

\subsection{Conclusion of the proof of Theorem \ref{thm:closedformula-1}}\label{subsection:expval1}

Combining Proposition \ref{prop:case1}, Proposition \ref{prop:case2}, Proposition \ref{prop:casenu2n1nu2jgeq1}, and Proposition  \ref{prop:case4}, then invoking Lemma \ref{lem:deltaGoeven}, we have
\[
\binom{L/k}{j} = \binom{n}{j} - (1 -u)\cdot \delta(n,j),
\]
where $\delta(n,j) = \left\{
\begin{array}{cc}
     1, & \frac{j-1}{2}\prec \frac{n-2}{2},  \\
     0, & \mathrm{else}
\end{array}
\right.$ and $\prec$ is as in Definition~\ref{def:prec} in the beginning of Section~\ref{subsection:nevenjodd}.
Finally,
we can conclude the proof of Theorem \ref{thm:closedformula-1} by Lucus's theorem.

\section{Twisted case - Proof of Theorem \ref{thm:closedformula-2}}
Let $n=2j$. The strategy for the proof of Theorem~\ref{thm:closedformula-2} is to rewrite $\Orb_{even}(C_n,\Neck(n,j)^{\tau})$ in terms of the orbits for the untwisted action that we have studied in the previous sections.

\subsection{Necklace interpretation of twisted orbits}\label{sec:twistenecint}

We first define a $D_{2j} \times C_2$ action on $\Neck(2j,j)$. The $D_{2j}$ action is defined to be the same as in the untwisted case defined in Section \ref{subsection:necklace_interpretation}.
The action of $C_2 = \langle e : e^2 =1 \rangle$ commutes with the $D_{2j}$ action, whose generator $e$ is called the \textit{color exchange}. As the name suggests, $e$ exchanges the colors of all the beads. More precisely, it turns the red beads of a necklace to blue, and the blue beads to red. Define the {\em twisted} action of $C_{2j}$ on $\Neck(2j,j)$ by the map $C_{2j} \to D_{2j} \times C_2$ defined by 
 \[
 r \mapsto (r,e)
 \] and the $D_{2j} \times C_2$ action just defined. To distinguish between the twisted and untwisted actions, when we view $\Neck(2j,j)$ with its twisted $C_{2j}$-action, we will write $\Neck(2j,j)^{\tau}$. Otherwise, $\Neck(2j,j)$ has the $C_{2j}$-action from the morphism $C_{2j} \to D_{2j}$, given $r \mapsto r$ as above. We have
 \[
 {L[Q]/k \choose j} = {2j \choose j} + (u - 1) \vert \Orb_{even}(C_{2j}, \Neck(2j,j)^{\tau})\vert,
 \]
 and we will denote
 \begin{equation}\label{eqn:defdeltaprime}
 \Delta'(2j,j) : = {L[Q]/k \choose j}-{2j \choose j} = (u - 1) \vert \Orb_{even}(C_{2j}, \Neck(2j,j)^{\tau})\vert.    
 \end{equation}

\subsection{Reduction to the untwisted case}\label{subsection:reduction}

\begin{lemma} The elements of $\Orb(C_{2j},\Neck(2j,j)^{\tau})$  can be interpreted as triples
    \[\SNt \cong \{([l],[l_1],[l_2]):[l]\in\SN,\phi([l]) = ([l_1],[l_2])\}/\sim,\] where in  $([l],[l_1],[l_2])$, $[l_1]$ and $[l_2]$ are ordered. The decomposition map $\phi$ is defined in \eqref{defn:phi}. The equivalence relation is defined as $([l],[l_1],[l_2])\sim(e\cdot [l],e\cdot [l_2],e\cdot [l_1])$.
\end{lemma}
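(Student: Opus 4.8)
The plan is to realize both sides of the claimed bijection as describing the same combinatorial object, built out of the alternating decomposition, and then to exhibit the comparison map explicitly. Throughout I work at the level of necklaces (with the top bead fixed, so that "position $p$" is meaningful), writing $\mathrm{ev}(l)$ and $\mathrm{od}(l)$ for the sub-necklaces of $l\in\Neck(2j,j)$ obtained by reading the beads at even and at odd positions. First I would record how the alternating decomposition interacts with the three relevant symmetries: the untwisted rotation $r^2$ rotates $\mathrm{ev}(l)$ and $\mathrm{od}(l)$ simultaneously by one step; the untwisted rotation $r$ interchanges $\mathrm{ev}(l)$ and $\mathrm{od}(l)$ (up to a one-step shift); and the colour-swap $e$ acts as $e$ on each sub-necklace. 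Since the twisted generator is $(r,e)$ and $(r,e)^2=(r^2,1)$, its square acts exactly as the untwisted rotation $r^2$, i.e. as the subgroup $\langle r^2\rangle\cong C_j$. Hence every twisted $C_{2j}$-orbit is a union of (one or two) $\langle r^2\rangle$-orbits, interchanged transitively by the residual $C_2=C_{2j}/\langle r^2\rangle$ generated by the class of $(r,e)$.

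Second, I would use that a single $\langle r^2\rangle$-orbit in $\Neck(2j,j)$ is the same datum as a diagonal-$C_j$-orbit of an \emph{ordered} pair $(\mathrm{ev},\mathrm{od})\in\Neck(j,j_1)\times\Neck(j,j_2)$, because $r^2$ rotates the two coordinates diagonally while preserving the even/odd labelling. From this one sees that an untwisted $C_{2j}$-orbit $[l]$ breaks as $D\sqcup D^{\mathrm{sw}}$, where $D$ is the diagonal $C_j$-orbit of $(\mathrm{ev}(l),\mathrm{od}(l))$ and $D^{\mathrm{sw}}$ is that of a suitably shifted swap of this pair; so $[l]$ is recovered from $D$, and choosing $D$ among the (one or two) pieces of $[l]$ is precisely the choice of an ordering of $\phi([l])$. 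This is where Lemma~\ref{lem:fibperiod1} and the odd/even-period dichotomy of Lemma~\ref{lem:2axes} enter: when $\mathrm{ev}(l)$ and $\mathrm{od}(l)$ lie in different $C_j$-orbits there are exactly two pieces, giving the two orderings, whereas when they lie in the same $C_j$-orbit one must be careful because $D$ and $D^{\mathrm{sw}}$ may or may not coincide.

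Third, I would assemble the comparison map $\Theta\colon\SNt\to\{\text{triples}\}/\!\sim$ by sending a twisted orbit to $\bigl[\,([l],[\mathrm{ev}(l)],[\mathrm{od}(l)])\,\bigr]_{\sim}$ for any representative necklace $l$; the relation $\phi([l])=([\mathrm{ev}(l)],[\mathrm{od}(l)])$ holds by the very definition of $\phi$, so $\Theta$ lands in the indicated subset. The content of the lemma is that $\Theta$ is a bijection, and well-definedness is the heart of it: changing the representative inside the twisted orbit by $(r,e)^{2m}$ rotates $\mathrm{ev}(l)$ and $\mathrm{od}(l)$ diagonally and fixes all three entries, while changing it by an odd power $(r,e)^{2m+1}$ replaces the triple by $(e\cdot[l],\,e\cdot[\mathrm{od}(l)],\,e\cdot[\mathrm{ev}(l)])$ — exactly the generator of $\sim$ — by the bookkeeping of the first step. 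For the inverse one reconstructs, from a triple $([l],[l_1],[l_2])$, the $\langle r^2\rangle$-orbit of a necklace $\tilde l$ representing $[l]$ with $\mathrm{ev}(\tilde l)\in[l_1]$ (possible since $\{[l_1],[l_2]\}=\phi([l])$), and then passes to the twisted orbit $[\tilde l]^{\tau}$.

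The step I expect to be the main obstacle is making these two constructions honestly mutually inverse in the degenerate range $[l_1]=[l_2]$ — equivalently, when the relevant untwisted orbit is colour-self-conjugate or has odd period — where the number of orderings of $\phi([l])$, the number of $\langle r^2\rangle$-pieces of $[l]$, and the size of the $\sim$-class of the associated triple all interact. One has to check that the redundancy in both the forward and the backward map is captured by $\sim$, neither more nor less. The cleanest way to control this is to carry out the whole argument at the level of the $\langle r^2\rangle$-orbit / ordered-pair description of the second step, where the slogan "a twisted orbit $=$ an unordered pair of $\langle r^2\rangle$-orbits swapped by the residual $C_2$" matches the triple data term by term, and only to translate back into the triple language at the very end.
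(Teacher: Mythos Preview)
Your $\langle r^2\rangle$-orbit decomposition is exactly the mechanism behind the paper's four-sentence proof, which simply asserts that a twisted orbit is an untwisted orbit $[l]$ together with the parity class of the distinguished bead, modulo~$\sim$. The references to Lemma~\ref{lem:fibperiod1} and Lemma~\ref{lem:2axes} are misplaced --- those concern flip symmetry and fibre counts of $\phi$ in the symmetric setting --- and can be dropped.

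Your instinct about the degenerate range is correct, and in fact sharper than you may realise: if one reads the second and third entries of the triple \emph{literally} as elements of $\Orb(C_j,\Neck(j,\cdot))$, the statement is false. For $j=6$ take $l\in\Neck(12,6)$ with $\mathrm{ev}(l)=\mathrm{od}(l)=BBRBRR$; then $[l_1]=[l_2]=[BBRBRR]$ is not $e$-invariant, one checks $[l]\ne e\cdot[l]$ and $\pi([l])=12$, and the twisted orbits of $l$ and of $rl$ are distinct yet your $\Theta$ sends both to the single $\sim$-class of $([l],[l_1],[l_1])$. So $\Theta$ as you have written it cannot be inverted there. What the paper's proof actually pins down --- ``the second entry $[l_1]$ means the chosen bead lies in $[l_1]$'' --- is the $\langle r^2\rangle$-suborbit of $[l]$ (equivalently the parity class of the top bead), not merely its image under $\phi$; this is also what Figure~\ref{fig:swapping} and the subsequent swap action use. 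Your own suggestion to ``carry out the whole argument at the level of the $\langle r^2\rangle$-orbit / ordered-pair description'' is therefore not just the cleanest route but the necessary one: at that level a twisted orbit is tautologically a $\langle r^2\rangle$-orbit modulo the residual $C_2$ generated by $(r,e)$, and the $\sim$-relation is exactly that residual action, so both directions of the bijection are forced with nothing left to check.
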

\begin{proof}
    An element in $\SNt$ is uniquely determined by an element $[l]$ in $\SN$ together with a chosen bead in $[l]$. On the other hand, two different chosen beads in $[l]$ will give rise to the same $\SNt$ orbits if and only if they both lie in the  $[l_1]$ or $[l_2]$, where $\phi([l]) = ([l_1],[l_2])$. Therefore, an element in $\SNt$ can be denoted as $([l],[l_1],[l_2])$, where the second entry $[l_1]$ means that the chosen bead lies in $[l_1]$.
    However, both $([l],[l_1],[l_2])$ and $(e\cdot [l],e\cdot [l_2],e\cdot [l_1])$ give rise to the same element in $\SNt$. By quotienting out this equivalence relation, we arrive at the desired isomorphism.
\end{proof}

\begin{figure}
\centering
\begin{tikzpicture}[scale=1.0]
   \begin{scope}[shift={(0, 0)}]

    \def\radius{2cm}

    \def\numBeads{12}

    \foreach \i in {1,...,\numBeads} {
        \pgfmathsetmacro{\angle}{360/\numBeads * (\i - 1)}
        
        \ifnum\i=1
            \def\fillColor{blue}
        \else
        \ifnum\i=3
            \def\fillColor{blue}
        \else
        \ifnum\i=5
            \def\fillColor{blue}
        \else
        \ifnum\i=6
            \def\fillColor{blue}
        \else
        \ifnum\i=9
            \def\fillColor{blue}
        \else
        \ifnum\i=10
            \def\fillColor{blue}
        \else
            \def\fillColor{red}
        \fi\fi\fi\fi\fi\fi

        \ifodd\i
            \node[draw, fill=\fillColor, minimum size=12pt] at (\angle:\radius) {};
        \else

            \node[draw, fill=\fillColor, circle, minimum size=12pt] at (\angle:\radius) {};
        \fi
    }
    \end{scope}
\begin{scope}[shift={(7, 0)}]

    \def\radius{2cm}

    \def\numBeads{12}

    \foreach \i in {1,...,\numBeads} {

        \pgfmathsetmacro{\angle}{360/\numBeads * (\i -2)}
        
        \ifnum\i=1
            \def\fillColor{red}
        \else
        \ifnum\i=3
            \def\fillColor{red}
        \else
        \ifnum\i=5
            \def\fillColor{red}
        \else
        \ifnum\i=6
            \def\fillColor{red}
        \else
        \ifnum\i=9
            \def\fillColor{red}
        \else
        \ifnum\i=10
            \def\fillColor{red}
        \else
            \def\fillColor{blue}
        \fi\fi\fi\fi\fi\fi

        \ifodd\i

            \node[draw,  fill=\fillColor, minimum size=12pt] at (\angle:\radius) {};
        \else

            \node[draw, circle, fill=\fillColor,  minimum size=12pt] at (\angle:\radius) {};
        \fi
    }
    \end{scope}
\node at (3.5, 0) { $\sim$};
\end{tikzpicture}
\caption{An example of elements in $\Orb(C_{12},\Neck(12,6)^\tau)$.}\label{fig:untwistedorb}
\end{figure}
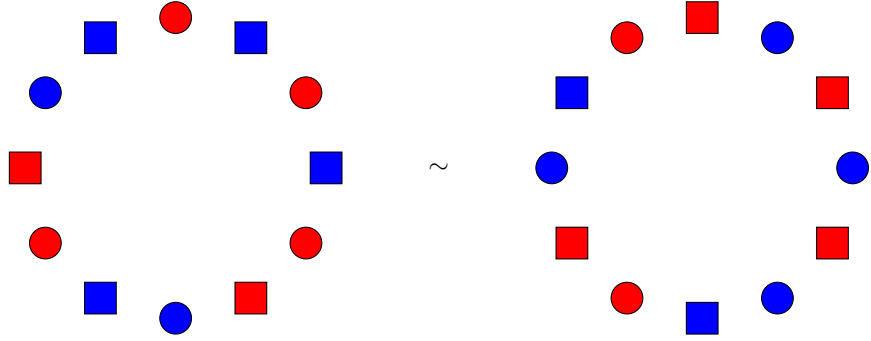

\begin{figure}
\centering
\begin{tikzpicture}[scale=1.0]
   \begin{scope}[shift={(0, 0)}]

    \def\radius{2cm}

    \def\numBeads{12}

    \foreach \i in {1,...,\numBeads} {

        \pgfmathsetmacro{\angle}{360/\numBeads * (\i - 1)}
        
        \ifnum\i=1
            \def\fillColor{blue}
        \else
        \ifnum\i=3
            \def\fillColor{blue}
        \else
        \ifnum\i=5
            \def\fillColor{blue}
        \else
        \ifnum\i=6
            \def\fillColor{blue}
        \else
        \ifnum\i=9
            \def\fillColor{blue}
        \else
        \ifnum\i=10
            \def\fillColor{blue}
        \else
            \def\fillColor{red}
        \fi\fi\fi\fi\fi\fi

        \ifodd\i

            \node[draw, fill=\fillColor, minimum size=12pt] at (\angle:\radius) {};
        \else

            \node[draw, fill=\fillColor, circle, minimum size=12pt] at (\angle:\radius) {};
        \fi
    }
    \end{scope}
\begin{scope}[shift={(7, 0)}]

    \def\radius{2cm}

    \def\numBeads{12}

    \foreach \i in {1,...,\numBeads} {

        \pgfmathsetmacro{\angle}{360/\numBeads * (\i - 1)}
        
        \ifnum\i=1
            \def\fillColor{blue}
        \else
        \ifnum\i=3
            \def\fillColor{blue}
        \else
        \ifnum\i=5
            \def\fillColor{blue}
        \else
        \ifnum\i=6
            \def\fillColor{blue}
        \else
        \ifnum\i=9
            \def\fillColor{blue}
        \else
        \ifnum\i=10
            \def\fillColor{blue}
        \else
            \def\fillColor{red}
        \fi\fi\fi\fi\fi\fi

        \ifodd\i

            \node[draw, circle, fill=\fillColor, minimum size=12pt] at (\angle:\radius) {};
        \else

            \node[draw, fill=\fillColor,  minimum size=12pt] at (\angle:\radius) {};
        \fi
    }
    \end{scope}
\node at (3.5, 0) { $\xrightarrow{s}$};
\end{tikzpicture}
\caption{An example of swapping.}\label{fig:swapping}
\end{figure}
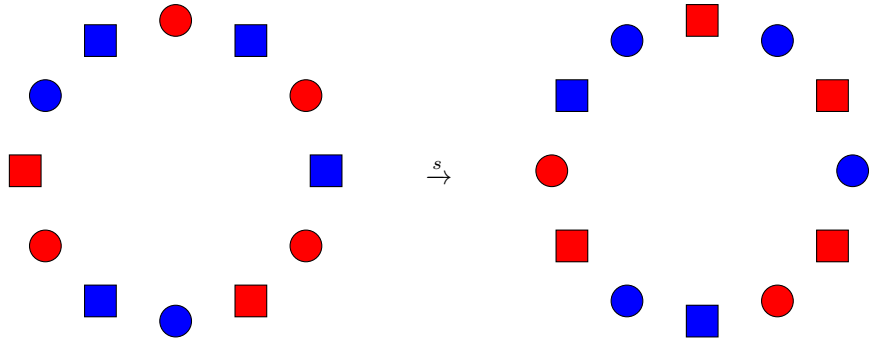
One way to visualize the triple is shown in Figure \ref{fig:untwistedorb}. Denote the element as $([l],[l_1],[l_2])\in\Orb(C_{12},\Neck(12,6)^\tau)$.  The  circle-shaped beads form $[l_1]\in \Orb(C_6,\Neck(6,2))$, whereas the square-shaped beads form $[l_2]\in \Orb(C_6,\Neck(6,4))$.

From now on, we will use the triple $([l],[l_1],[l_2])$ to denote elements in $\SNt$.
Define the twisted period as $\pi'([l],[l_1],[l_2]) := |([l],[l_1],[l_2])|$, i.e., the size of the twisted $C_{2j}$-orbit. Next, we have a $C_2$ action on $\SNt$, which acts by exchanging $l_1,l_2$. Combinatorically, this action interchanges the two potentially different twisted orbits that could come from the same underlining untwisted orbit $[l]$.  We will call this action swapping, and an example is shown in Figure \ref{fig:swapping}. For a set $\cN$, we will use $\cN^s$ to denote the subset fixed by swapping. 

\begin{lemma}\label{lem:deltaprimevalue}
    $\Delta'(2j,j) = (u-1)|\SNt^s|$, where $\SNt^s$ is the subset of $\SeNt$ that is fixed by swapping.
\end{lemma}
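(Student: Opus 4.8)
The plan is to mimic the structure already used for the untwisted invariant in Lemma~\ref{lem:deltaGoeven} and the necklace interpretation, pushing the $C_2$-action (swapping) through the same parity argument. Recall from \eqref{eqn:defdeltaprime} that
\[
\Delta'(2j,j) = (u-1)\,\bigl|\Orb_{even}(C_{2j},\Neck(2j,j)^{\tau})\bigr| = (u-1)\,\bigl|\SeNt\bigr|,
\]
so it suffices to show that $|\SeNt| \equiv |\SNt^s| \bmod 2$, since multiplication by $(u-1)$ only depends on the parity of the integer (indeed $2(u-1) = 2u - 2 = 0$ in $\GW(\bF_q)$, by the relation $2-2u$). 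Thus the real content is the congruence of cardinalities modulo $2$.

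First I would observe that swapping, being an action of the group $C_2$ on the finite set $\SeNt$, partitions it into orbits of size $1$ (the fixed points, i.e.\ $\SeNt^s = \SNt^s$) and orbits of size $2$. Hence $|\SeNt| \equiv |\SNt^s| \bmod 2$ immediately, \emph{provided} one checks that swapping does genuinely preserve $\SeNt$ — that is, that it sends twisted orbits of even cardinality to twisted orbits of even cardinality. This is clear because swapping is induced by an honest bijection of $\Neck(2j,j)^\tau$ (it is the $C_2$-action built from exchanging $[l_1]$ and $[l_2]$ in the triple description of the previous lemma, equivalently the action realizing $\SNt$ as a quotient by $\sim$), so it carries each twisted $C_{2j}$-orbit to a twisted $C_{2j}$-orbit of the same size; in particular it restricts to $\SeNt$. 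Then I would combine this with Lemma~\ref{lem:deltaGoeven}'s twisted analogue, namely \eqref{eqn:defdeltaprime}, to conclude
\[
\Delta'(2j,j) = (u-1)\,|\SeNt| = (u-1)\,|\SNt^s|,
\]
where the last equality uses $|\SeNt| - |\SNt^s| \in 2\bZ$ and $(u-1)\cdot 2 = 0$.

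The one point requiring a little care — and the step I expect to be the main obstacle — is verifying that swapping is well-defined on $\SNt$ and preserves cardinalities of orbits, i.e.\ that it descends correctly through the equivalence relation $([l],[l_1],[l_2]) \sim (e\cdot[l], e\cdot[l_2], e\cdot[l_1])$ from the previous lemma. Concretely one must check that exchanging $[l_1]$ with $[l_2]$ commutes with this relation (it does, since applying $e$ twice is the identity and $e$ commutes with the rotation action) and that the resulting element again lies in $\SNt$ rather than being sent outside; this is where the triple description does the work. Everything else is the standard fixed-point parity count for a $C_2$-action, which is purely formal. I would keep the proof short: state that swapping is a $C_2$-action on the finite set $\SeNt$, invoke the orbit decomposition to get the parity statement, and then feed it into \eqref{eqn:defdeltaprime}.
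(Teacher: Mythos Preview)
Your proposal is correct and follows essentially the same approach as the paper: the paper's proof simply observes that the swapping action does not change the twisted period (hence preserves $\SeNt$), then invokes \eqref{eqn:defdeltaprime} together with $2(u-1)=0$. Your argument is just a more detailed unpacking of this, making the $C_2$-orbit parity count explicit; the one parenthetical identifying swapping with the action underlying the $\sim$ relation is imprecise (they are different involutions), but it is not used and does not affect the argument.
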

\begin{proof}
    Since the swapping action does not change the twisted period, by \eqref{eqn:defdeltaprime} and $2(u-1)=0$, we got the desired formula.
\end{proof}

\begin{lemma}\label{lem:twistedfixed}
In $\Orb(C_{2j},\Neck(2j,j)^{\tau})$, 
    $([l],[l_1],[l_2])=      ([l],[l_2],[l_1])$ if and only if either $[l_1]=[l_2]$ or $[l] = e\cdot [l]$.
\end{lemma}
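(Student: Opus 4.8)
The plan is to read off the statement from the model of $\SNt$ (here $n=2j$) supplied by the preceding lemma, namely $\SNt\cong X/\!\sim$ with
\[
X=\{([l],[l_1],[l_2]):[l]\in\SN,\ \phi([l])=([l_1],[l_2])\}
\]
and $\sim$ generated by $([l],[l_1],[l_2])\sim(e\cdot[l],e\cdot[l_2],e\cdot[l_1])$.

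First I would record that $\sim$ has equivalence classes of size at most two: applying the generating relation to $(e\cdot[l],e\cdot[l_2],e\cdot[l_1])$ and using that $e$ is an involution on necklaces returns the original triple, so the $\sim$-class of $([l],[l_1],[l_2])$ is exactly $\{([l],[l_1],[l_2]),\,(e\cdot[l],e\cdot[l_2],e\cdot[l_1])\}$. Hence $([l],[l_1],[l_2])=([l],[l_2],[l_1])$ in $\SNt$ holds if and only if the triple $([l],[l_2],[l_1])$ lies in this set, i.e. if and only if either (i) $([l],[l_2],[l_1])=([l],[l_1],[l_2])$, equivalently $[l_1]=[l_2]$, or (ii) $([l],[l_2],[l_1])=(e\cdot[l],e\cdot[l_2],e\cdot[l_1])$, which in particular forces $[l]=e\cdot[l]$. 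This already gives the forward implication.

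For the converse, the case $[l_1]=[l_2]$ is case (i). In the case $[l]=e\cdot[l]$ I would use that recolouring a necklace commutes with the operation of selecting every other bead, so that $\phi(e\cdot[l])=\{e\cdot[l_1],e\cdot[l_2]\}$; since $\phi(e\cdot[l])=\phi([l])=\{[l_1],[l_2]\}$ this yields $\{e\cdot[l_1],e\cdot[l_2]\}=\{[l_1],[l_2]\}$. Feeding this back into the generating relation $([l],[l_1],[l_2])\sim(e\cdot[l],e\cdot[l_2],e\cdot[l_1])$ then identifies the $\sim$-classes of $([l],[l_1],[l_2])$ and $([l],[l_2],[l_1])$, completing the equivalence.

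I expect the crux to be this last step, which requires controlling the relation between the ordered pair $([l_1],[l_2])$ and $(e\cdot[l_1],e\cdot[l_2])$. Concretely, writing $e\cdot l=r^m l$ for some representative $l$ and using the resulting identity $r^{2m}l=l$, the parity of $m$ governs whether $e\cdot[l_1]=[l_1]$ or $e\cdot[l_1]=[l_2]$ — the same even/odd alternative that decides whether a length-$m$ rotation preserves or interchanges the two necklaces produced by $\phi$, and which connects back both to the well-definedness of $\phi$ and to Lemma~\ref{lem:2axes}. Sorting out this parity bookkeeping — in particular, tracking correctly which of the two sub-necklaces a marked bead falls into as such a rotation is applied, so that no twisted orbit is spuriously split or merged — is the delicate part; everything else is direct inspection of the explicit two-element $\sim$-classes.
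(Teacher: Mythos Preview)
Your approach—unpacking the triple model $X/\!\sim$ from the preceding lemma and reading off when the swap is fixed—is exactly the ``definition chasing'' the paper gestures at. Your forward direction is correct, as is the $[l_1]=[l_2]$ case of the converse.

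Your instinct that the parity of $m$ (where $e\cdot l=r^{m}l$) is the crux is right, but the bookkeeping does \emph{not} resolve in the lemma's favour. In your Case~B ($m$ odd, equivalently $\nu_2(\pi([l]))=1$) one has $e\cdot[l_1]=[l_2]$ and $e\cdot[l_2]=[l_1]$, so the $\sim$-partner of $([l],[l_1],[l_2])$ is $(e\cdot[l],e\cdot[l_2],e\cdot[l_1])=([l],[l_1],[l_2])$ itself, and similarly $([l],[l_2],[l_1])$ is its own $\sim$-partner. When $[l_1]\neq[l_2]$ these two triples are therefore distinct $\sim$-fixed points, hence distinct in $X/\!\sim$, and swapping interchanges them rather than fixing either. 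Concretely, for $j=2$ and $l=BRBR$ one has $[l]=e\cdot[l]$ (via $m=1$) with $[l_1]=BB\neq RR=[l_2]$, and the twisted orbits $\{BRBR\}$ and $\{RBRB\}$ are distinct and swapped. So the converse as stated cannot be completed; the ``if'' direction needs the extra condition $e\cdot[l_i]=[l_i]$ (equivalently $m$ even, equivalently $\nu_2(\pi([l]))\ge2$).

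This is harmless for the paper's argument: the later Proposition shows that Case~B forces odd twisted period, so such triples never lie in $\SeNt$, and the decomposition of $|\SNt^s|$ that the lemma is used for remains valid. But you should flag that the lemma as written is only an ``only if'', or else restrict it to $\SeNt$.
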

\begin{proof}
    By the definition of the twisted orbit, there are two possibilities for $([l],[l_1],[l_2])=      ([l],[l_2],[l_1])$. The first is $[l_1] = [l_2]$, and the other is $[l] = e\cdot[l], [l_1] = e\cdot[l_1],[l_2]=e\cdot[l_2]$.
\end{proof}

\begin{corollary}\label{cor:deltatwistedodd}
    $\Delta'(2j,j) =0$, for $\nu_2(j)=0$.
\end{corollary}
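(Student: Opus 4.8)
I want to show $\Delta'(2j,j) = 0$ whenever $j$ is odd.

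The strategy is to use Lemma~\ref{lem:deltaprimevalue}, which reduces the problem to showing that $|\SNt^s|$, the number of swapping-fixed orbits of even twisted period in $\Orb(C_{2j},\Neck(2j,j)^\tau)$, is even — in fact I expect it to be zero under the hypothesis $\nu_2(j)=0$. So the plan is to pin down exactly which twisted orbits are swapping-fixed, using the triple description $([l],[l_1],[l_2])$ together with Lemma~\ref{lem:twistedfixed}, and then check that none of them can have even twisted period when $j$ is odd.

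First I would unpack Lemma~\ref{lem:twistedfixed}: a triple $([l],[l_1],[l_2])$ is swapping-fixed precisely when $[l_1]=[l_2]$ or $[l]=e\cdot[l]$. I would analyze these two (not necessarily disjoint) cases separately. In the case $[l_1]=[l_2]$, the underlying necklace $l$ is obtained by interweaving two copies of the same $\frac n2 = j$-bead necklace $[l_1]\in\Orb(C_j,\Neck(j,\tfrac j2))$ — but $\tfrac j2$ is not an integer when $j$ is odd, so $\Neck(j,\tfrac j2)=\emptyset$ and this case is vacuous. (This is the same "fractional lower index" phenomenon used repeatedly in Section~\ref{subsection:nevenjodd}.) So only the case $[l]=e\cdot[l]$ survives; here $l$ is a color-swap-invariant necklace with $2j$ beads, and such a necklace has a nontrivial element of its twisted stabilizer that reverses colors, forcing the twisted period $\pi'([l],[l_1],[l_2])$ to be \emph{odd} (it is a proper divisor relationship: the color-swapping symmetry contributes an odd-index factor). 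Hence every swapping-fixed triple has odd twisted period, so $\SNt^s\subseteq\SoNt$, giving $|\SNt^s|=0$ and $\Delta'(2j,j)=0$ by Lemma~\ref{lem:deltaprimevalue}.

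The step I expect to be the main obstacle is the parity-of-twisted-period claim in the surviving case $[l]=e\cdot[l]$: one must argue carefully that color-swap-invariance of the untwisted necklace $[l]$ forces the twisted $C_{2j}$-orbit size to be odd. The cleanest way is probably to observe that $[l]=e\cdot[l]$ means there is $m$ with $e\cdot l = r^m l$ as necklaces, so that in the twisted action $r^{?}$ stabilizes $l$ after accounting for the built-in color swap $r\mapsto(r,e)$; tracking the arithmetic of $m$ modulo the untwisted period and using that the twisted period divides $2j$ while the untwisted period is even, one extracts that $\pi'$ carries an odd factor. I would want to double-check the edge cases (e.g. when $[l_1]=[l_2]$ and $[l]=e\cdot[l]$ simultaneously, which as noted is vacuous for odd $j$) and make sure the reduction via Lemma~\ref{lem:deltaprimevalue} genuinely only needs the parity and not the exact value.
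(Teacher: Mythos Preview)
Your approach is correct but takes a different route from the paper. The paper's argument is that the swapping $C_2$-action is \emph{free} on $\SNt$ when $j$ is odd: both alternatives in Lemma~\ref{lem:twistedfixed} ultimately force some sub-necklace on $j$ beads to have exactly $j/2$ blue beads (either via $[l_1]=[l_2]$ directly, or via the color-balance hidden in $[l]=e\cdot[l]$ on the level of $[l_i]$), which is impossible for odd $j$. With no swap-fixed points at all, $|\SNt^s|=0$ immediately.

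You instead allow that $[l]=e\cdot[l]$ might produce swap-fixed triples and then exclude them by showing their twisted period is odd. This also works, and the arithmetic you flagged as the main obstacle goes through cleanly: since $j$ is odd, $\nu_2(2j)=1$, and the untwisted period $\pi([l])$ is always even and divides $2j$, so $\nu_2(\pi([l]))=1$; then the Proposition proved just after Lemma~\ref{lem:twistedfixed} gives $\pi'=\pi([l])/2$, which is odd. Two small remarks: your line ``$\SNt^s\subseteq\SoNt$'' should read $\SNt^s=\emptyset$, since by definition $\SNt^s\subseteq\SeNt$; and your route effectively forward-references period machinery the paper only establishes after this corollary, whereas the paper's freeness argument is self-contained at this point and avoids the twisted-period analysis entirely.
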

\begin{proof}
    Since $j$ is odd, $[l_1]\neq [l_2]$ and $[l_1]\neq e\cdot[l_2]$ for all elements $([l],[l_1],[l_2])\in \SNt$. Thus, the $C_2$-action of swapping acts freely, so $|\SNt^s|\equiv0\mod2$.
\end{proof}

From now on, we will assume $j$ is even. 
Applying the above results to $|\SNt^s|$, we have
\begin{align*}
|\SNt^s|   =    & |([l],[l_1],[l_1])\in \SeNt, [l]\neq e\cdot[l]  | \\
& + |([l],[l_1],[l_2])\in \SeNt,[l]=e\cdot[l]  |. \\
\end{align*}
For $([l],[l_1],[l_1])\in \SeNt$, $[l]\neq e\cdot [l]\Leftrightarrow [l_1]\neq e\cdot [l_1]$, and  notice that if $[l_1] \neq e\cdot[l_1]$, then $2|\pi'([l],[l_1],[l_1])$. Therefore,
\begin{align*}
|\SNt^s| 
= & \sum_{[l]\in \Orb(C_{j},\Neck(j,\frac j2)),[l]\neq e\cdot[l]}\frac{|\phi^{-1}([l],[l])|}{2} \\
&+  |([l],[l_1],[l_2])\in \SeNt,[l]=e\cdot[l]|
\end{align*}

To further simplify the second term, we notice the following facts.
\begin{lemma}
   For $([l],[l_1],[l_2])\in \SNt$, we have $\pi([l]) \equiv 0\mod 2$.
\end{lemma}
\begin{proof}
    Since $\nu_2(2j)>\nu_2(j)$, assume $2\nmid\pi([l])$, then the number of blue beads will not be an integer.
\end{proof}
\begin{lemma}
    If $[l] = e\cdot [l]$, then for all $l\in \Neck(2j,j)$, we have
    \[
    e\cdot l = r^{\frac{\pi([l])}{2}}\cdot l.
    \]
\end{lemma}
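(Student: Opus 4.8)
The plan is to extract the identity $e\cdot l = r^{\pi([l])/2}\cdot l$ directly from the hypothesis $[l] = e\cdot[l]$ together with the structure of the twisted orbit. Since $[l]=e\cdot [l]$ as elements of $\Orb(C_{2j},\Neck(2j,j))$, there exists some $m$ with $e\cdot l = r^m\cdot l$. First I would pin down the possible values of $m$: applying $e$ again and using that $e^2 = 1$ commutes with $r$, we get $l = e\cdot(r^m\cdot l) = r^m\cdot(e\cdot l) = r^{2m}\cdot l$, so $r^{2m}$ stabilizes $l$, i.e. $\pi([l]) \mid 2m$. By the previous lemma $\pi([l])$ is even, so write $\pi([l]) = 2s$; then $s \mid m$, and modulo $\pi([l])$ the value $m$ is either $0$ or $s = \pi([l])/2$. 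I must rule out $m\equiv 0$: if $e\cdot l = l$ then the necklace $l$ has equally many blue and red beads in each position, which is absurd since a necklace is determined by which beads are blue and $e$ swaps the two colors — no necklace is fixed by $e$. Hence $m \equiv \pi([l])/2 \pmod{\pi([l])}$, which gives $e\cdot l = r^{\pi([l])/2}\cdot l$.

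The second point to address is that the statement claims this holds \emph{for all} representatives $l$ of the orbit $[l]$, not just one. This is immediate once it holds for one: any other representative is $r^k\cdot l$ for some $k$, and since $e$ commutes with $r$ we get $e\cdot(r^k\cdot l) = r^k\cdot(e\cdot l) = r^k\cdot r^{\pi([l])/2}\cdot l = r^{\pi([l])/2}\cdot(r^k\cdot l)$, using that $\pi([l])$ (hence $\pi([l])/2$ read mod $\pi([l])$) is an invariant of the orbit, not of the chosen representative.

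I would present these as two short steps: (i) $r^{2m}$ fixes $l$ forces $m \in \{0, \pi([l])/2\}$ mod $\pi([l])$; (ii) $m=0$ is impossible because $e$ has no fixed necklaces; (iii) transport along the $C_{2j}$-action to all representatives. The main (and only) subtlety is step (ii): one must be careful that "$e$ fixes no necklace in $\Neck(2j,j)$" is exactly the observation that a necklace in $\Neck(2j,j)$ is the datum of a $j$-element subset of beads coloured blue, and the complement $j$-element subset coloured red, and $e$ interchanges a subset with its complement, which is never equal to itself. Everything else is routine manipulation in the cyclic group $C_{2j}$ and its action, using only that $e$ and $r$ commute and the already-established parity $2 \mid \pi([l])$.
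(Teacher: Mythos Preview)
Your proof is correct and follows essentially the same approach as the paper's: both argue that $e\cdot l = r^m\cdot l$ for some $m$, use $e^2=1$ to deduce $\pi([l])\mid 2m$, and rule out $m\equiv 0$ because $e$ fixes no necklace, forcing $m=\pi([l])/2$. Your write-up is more detailed (explicitly justifying why $e$ has no fixed points and why the identity transports to all representatives), but the argument is the same; note that your appeal to the previous lemma for the evenness of $\pi([l])$ is not actually needed, since $0<m<\pi([l])$ together with $\pi([l])\mid 2m$ already forces $2m=\pi([l])$.
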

\begin{proof}
    Apparently $e= r^{m}$ for some $m\in\bZ/\pi([l])\bZ$. Since $e$ changes $l$, we can take $0<m<\pi([l])$. On the other hand, since $e^2 = 1$, one has $\pi([l])|2m$, thus $m = \frac{\pi([l])}{2}$.
\end{proof}

\begin{proposition}
     $\pi'([l],[l_1],[l_2])=\pi([l])$ unless $l = e\cdot[l]$, and $\nu_2(\pi([l])) =1$, in which case $\pi'([l],[l_1],[l_2])=\frac{\pi([l])}{2}\equiv 1\mod 2$.
\end{proposition}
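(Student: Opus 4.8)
The plan is to read the twisted period straight off the definition of the twisted $C_{2j}$-action and to compare it with the ordinary period $\pi([l])$ by keeping track of the parity of exponents. Recall that the twisted action comes from $C_{2j}\to D_{2j}\times C_2$, $r\mapsto(r,e)$, so for $l\in\Neck(2j,j)$ the element $r^m$ acts by rotating $l$ through $m$ beads and, in addition, applying the color swap $e$ exactly when $m$ is odd. Using $e^2=1$, the twisted period $\pi'([l],[l_1],[l_2])$ is therefore the least $m>0$ such that $r^m l=l$ when $m$ is even, or such that $r^m l=e\cdot l$ when $m$ is odd. At the outset I would record two facts: $e\cdot l\neq l$ for every $l\in\Neck(2j,j)$, since $e$ flips the color of every bead and $j\geq 1$; and $\pi([l])$ is even, which is the lemma immediately preceding the proposition and is available because we are in the case $j$ even. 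I would also note that $\pi'$, the integer $\pi([l])$, and the condition $[l]=e\cdot[l]$ are all invariant under the equivalence $([l],[l_1],[l_2])\sim(e\cdot[l],e\cdot[l_2],e\cdot[l_1])$, so both sides of the claimed identity are well defined.

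Next I would treat the generic case $[l]\neq e\cdot[l]$. Then $e\cdot l$ does not lie in the untwisted $C_{2j}$-orbit of $l$, so no exponent $m$ can satisfy $r^m l=e\cdot l$; hence a twisted stabilizing exponent must be even, and the set of $m$ with $r^m l=l$ consists of the multiples of $\pi([l])$. Since $\pi([l])$ is even, the least positive even multiple is $\pi([l])$ itself, so $\pi'([l],[l_1],[l_2])=\pi([l])$. In the remaining case $[l]=e\cdot[l]$, the preceding lemma gives the exact identity $e\cdot l=r^{\pi([l])/2}l$ for every representative $l$, so the solutions of $r^m l=e\cdot l$ are precisely the exponents $\tfrac{\pi([l])}{2}+k\,\pi([l])$, $k\in\bZ$. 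As $\pi([l])$ is even, one of these is odd if and only if $\tfrac{\pi([l])}{2}$ is odd, i.e. $\nu_2(\pi([l]))=1$. If $\nu_2(\pi([l]))\geq 2$ there is no such odd exponent, and the argument of the generic case again yields $\pi'=\pi([l])$. If $\nu_2(\pi([l]))=1$, then $m=\tfrac{\pi([l])}{2}$ is odd and $r^{m}l=e\cdot l$, so $m$ stabilizes $l$ for the twisted action; I then check minimality: an even $m<\tfrac{\pi([l])}{2}$ cannot fix $l$ under rotation (it is below $\pi([l])$), and an odd $m<\tfrac{\pi([l])}{2}$ would have to satisfy $m\equiv\tfrac{\pi([l])}{2}\pmod{\pi([l])}$, which is impossible in that range. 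Hence $\pi'=\tfrac{\pi([l])}{2}$, which is odd, and this is exactly the stated dichotomy.

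The one step that requires genuine care is the minimality verification in the exceptional subcase (showing $\tfrac{\pi([l])}{2}$ is the smallest twisted period, not merely a twisted period); the rest is a mechanical unwinding of the twisted action. The other point to keep straight is that the input lemma $e\cdot l=r^{\pi([l])/2}l$ must be applied to one fixed representative $l$ throughout the argument, rather than silently switching representatives, since it is the parity of that exact exponent that drives the whole computation.
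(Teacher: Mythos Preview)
Your proof is correct. The paper's argument is organized differently: rather than splitting on whether $[l]=e\cdot[l]$, it splits on the parity of $\pi'$ itself. Since $\pi([l])$ is even, $r^{\pi([l])}$ acts trivially in the twisted action, so $\pi'\mid\pi([l])$; if $\pi'$ is even then the twisted and untwisted actions of $r^{\pi'}$ coincide, forcing $\pi'=\pi([l])$; hence $\pi'<\pi([l])$ only when $\pi'$ is odd, and then $r^{2\pi'}l=l$ gives $2\pi'=\pi([l])$ and $\nu_2(\pi([l]))=1$. Your route instead leans on the preceding lemma $e\cdot l=r^{\pi([l])/2}l$ to locate the odd twisted-stabilizing exponents directly. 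The paper's version is slicker in that it does not invoke that lemma, while yours makes the full biconditional and the minimality of $\tfrac{\pi([l])}{2}$ explicit---points the paper's terse proof leaves to the reader.
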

\begin{proof}
    Since $\pi([l])$ is even, $r^{\pi([l])} = e^{\pi([l])}$, and we always have $\pi'([l],[l_1],[l_2])\leq\pi([l])$. By the definition of the twisted action, if $\pi'([l],[l_1],[l_2])$ is even, we have  $\pi'([l],[l_1],[l_2])=\pi([l])$. Therefore, the only possibility for $\pi'([l],[l_1],[l_2])<\pi([l])$ is when $\pi'([l],[l_1],[l_2])$ is odd. In this case, we have $2\pi'([l],[l_1],[l_2]) = \pi([l])$, and thus $\nu_2([l])=1$.
\end{proof}

Therefore
\begin{align*}
|\SNt^s| = &\sum_{[l]\in \Orb(C_{j},\Neck(j,\frac j2)),[l]\neq e\cdot[l]}\frac{|\phi^{-1}([l],[l])|}{2} \\ & + |([l],[l_1],[l_2])\in \SNt,[l]=e\cdot[l],\nu_2(\pi([l]))>1|\\ 
= & \sum_{[l]\in \Orb(C_{j},\Neck(j,\frac j2)),[l]\neq e\cdot[l]}\frac{|\phi^{-1}([l],[l])|}{2} \\
& + |l\in \SNjj,[l]=e\cdot[l],\nu_2(\pi([l]))>1|
\end{align*}
where the last formula is an enumeration purely in terms of untwisted orbits.

\subsection{Relations to partitions}\label{subsection:relatetopart}

Consider an element $[l]\in\SN$. One can uniquely write $[l]$ as the cyclic orbit of a partition of $n$ as follows. Call a maximal consecutive segment of beads of $l$ of the same color a \textit{cluster}. By replacing each cluster of $l$ by its number of beads, one obtains the corresponding partition on $n$. For example, fix $l\in [l]$, if the beads in positions 1 and 5 are red but everything in between is blue, we will replace the 3 blue beads with the number 3. If the cluster is red, we will mark it by adding an underline to distinguish it from the blue ones. We will use the notation $(\cdots)$ to denote the cyclic equivalent class of partitions. So $[l]$ can be uniquely written as the cyclic orbit of a partition of $n$ as
    \[
    [l] = (\underline{r_1}b_1\underline{r_2}b_2\cdots \underline{r_m}b_m),\quad r_i,b_i\in\bZ_{>0},
    \]
    with $\sum_{i}(r_i+b_i)=n,\sum_{i}b_i = j.$ Under this notation, the action of $e$ is simply 
    \[
    e\cdot (\underline{r_1}b_1\underline{r_2}b_2\cdots \underline{r_m}b_m) = (r_1\underline{b_1}r_2\underline{b_2}\cdots r_m \underline{b_m}).
    \]
    Apparently, the length of the partition has to be even.   For example, the three elements in Figure \ref{fig:orb104} can be written as $(\underline{6} 4),(\underline{2}1\underline{1}1\underline{2}1\underline{1}1)$, $(\underline{1}1\underline{4}1\underline{1}2)$, respectively.
    
    Denote the set of even length partitions of $n$ with an underlined marking for every other entry such that the sum of unmarked entries is $j$ by $\mathrm{Part}(n,j,\underline{n-j})$. We further denote the set of its cyclic equivalent classes by $\Orb(C,\mathrm{Part}(n,j,\underline{n-j}))$. For an element $[p]\in \Orb(C,\mathrm{Part}(n,j,\underline{n-j}))$, we denote its length by $|p| := \mathrm{length}(p)$, and define its period as $\varpi([p]) = |C_{|p|}\cdot p|$\footnote{Note there is no direct relation between $\varpi$ and $\pi'$.}.

    Denote the set of partitions of $j$ by $\mathrm{Part}(j)$, and similarly denote the set of cyclic equivalent classes and orbits as in the marked case, then we have
\begin{lemma}\label{lem:neckorbtopart}
    There is a bijection
    \[
     \{[l]\in \SNjj,l=e\cdot[l]\}  \cong \{[p]\in\Orb(C,\mathrm{Part}(j)),\nu_2(\varpi([p]))=0\}.
    \]
\end{lemma}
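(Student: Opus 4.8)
The plan is to push everything through the marked‑partition encoding of Section~\ref{subsection:relatetopart} and reduce the asserted bijection to an elementary fact about primitive blocks of cyclic words. Recall first that this encoding identifies $\SNjj$ with $\Orb(C,\mathrm{Part}(2j,j,\underline{j}))$, and that under it the colour‑swap $e$ sends a cyclic marked partition to the one with the same underlying integer sequence but with the marks moved onto the other (alternate) set of positions. So an $e$‑fixed orbit $[l]\in\SNjj$ is the same datum as a cyclic marked partition $p=(\underline{r_1}b_1\cdots\underline{r_m}b_m)$ of $2j$, with alternating marks and unmarked sum $j$, satisfying $[p]=[e\cdot p]$.

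First I would analyze the condition $[p]=[e\cdot p]$. Viewing $p$ as a cyclic word of length $2m$ of positive integers with the marks on the alternate positions, any cyclic rotation carrying $e\cdot p$ back to $p$ must move the marks from the even positions back onto the odd ones, hence must be a rotation by an odd number of entries; conversely any such rotation restores the marking for free. Thus $[p]=[e\cdot p]$ holds exactly when the underlying unmarked integer sequence $a$ of $p$ is invariant under an odd cyclic rotation, i.e. exactly when the cyclic period $d$ of $a$ is odd. In that case $d\mid m$, and writing $a=c^{2m/d}$ for its primitive block $c$ (of odd length $d$) the multiplicity $2e:=2m/d$ is even; because $d$ is odd, consecutive $c$‑blocks carry the two alternating markings that differ by $e$, and counting beads gives $e\sum_i c_i=j$.

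With this normal form in hand I would define the correspondence: send $[p]$ to the cyclic orbit $[q]$ of the composition $q:=c^e$ of $j$. Since $c$ is primitive of odd length $d$, the cyclic period of $q$ is $d$, so $\nu_2(\varpi([q]))=0$. In the reverse direction, given $[q]$ with $\varpi([q])=d$ odd, write $q=g^f$ with $g$ primitive of length $d$ (so $f\mid j$), colour the block $g$ in the evident way to get a run‑coloured block $G$ of $j/f$ beads, and let $[l]$ be the class of the necklace $(G\bar G)^f$ of length $2j$, where $\bar G$ is the colour‑swap of $G$; this is manifestly self‑complementary, hence an $e$‑fixed element of $\SNjj$. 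The two maps are mutually inverse, since each recovers the primitive block of the other, and both descend to cyclic‑equivalence classes because the cyclic rotations of any $c^k$ are precisely the words $(\text{rotation of }c)^k$, while the only remaining ambiguity, the interchange $p\leftrightarrow e\cdot p$, is exactly the odd rotation allowed by the first step.

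I expect the main obstacle to be the bookkeeping of well‑definedness in this last step, rather than any single hard idea: one must check that the primitive block read off from an $e$‑fixed necklace is independent of the chosen representative (including the $p\leftrightarrow e\cdot p$ ambiguity), that distinct $e$‑fixed necklaces produce distinct $[q]$, and that the numerology lines up so that the parities of $d$, of the length $2m$, and of $\varpi([q])$ match on the nose. The computation that the cyclic rotations of $c^k$ are exactly the $(\text{rotation of }c)^k$ is the technical core that makes all of this coherent.
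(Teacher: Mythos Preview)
Your proposal is correct and follows essentially the same route as the paper: both encode via the marked-partition correspondence, translate $[l]=e\cdot[l]$ into the oddness of the cyclic period of the underlying unmarked word, and extract from this a cyclic composition of $j$ with odd period. The paper is terser (it reduces to the primitive case $\varpi([p])=2m$ and exhibits the unique $e$-fixed interleaving explicitly, implicitly sending $[p]$ to the red subsequence $(r_1,\ldots,r_m)$), whereas you carry the primitive-block bookkeeping through in full and send $[p]$ to $[c^{m/d}]$; the two bijections differ only by the harmless reindexing $i\mapsto 2i-1$ on $\mathbb{Z}/d$, and one small quibble is that you overload the symbol $e$ for both the colour-swap and the multiplicity $m/d$.
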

\begin{proof}
Let $[l]\in \SNjj$, and $[p]= (\underline{r_1}b_1\underline{r_2}b_2\cdots \underline{r_m}b_m)$ be its corresponding element in $\Orb(C,\mathrm{Part}(2j,j,\underline{j}))$. If $[l]=e\cdot[l]$, then
\[
(\underline{r_1}b_1\underline{r_2}b_2\cdots \underline{r_m}b_m) = (r_1\underline{b_1}r_2\underline{b_2}\cdots r_m \underline{b_m}),
\]
and apparently we need $(r_1\cdots r_m) = (b_1\cdots b_m)$. Moreover, by passing to the smaller partition that is formed by taking $\varpi([p])$ consecutive parts, without loss of generality, we can assume the partition is non-periodic, i.e., $\varpi([p]) = 2m$. Then the only element in $\Orb(C,\mathrm{Part}(2j,j,\underline{j}))$ of the form $(\underline{r_1},r_{i_1},\underline{r_2},r_{i_2},\cdots,\underline{r_{m}},r_{i_m})$ that satisfies $[p] = e\cdot [p]$ is
\[
(\underline{r_1}r_{\frac{m+3}{2}}\underline{r_2}r_{\frac{m+5}{2}}\cdots r_{m}\underline{r_{\frac{m+1}2}}r_1\underline{r_{\frac{m+3}{2}}}\cdots\underline{r_{m}}r_{\frac{m+1}{2}}),
\]
and $m$ has to be odd.
\end{proof}

Recall that the total number of partitions of $j\in\bZ_{>0}$ is $2^{j-1}$. So
\begin{equation}\label{eqn:decomppart}
2^{j-1} = \sum_{d}d |\{[p]\in\Orb(C,\mathrm{Part}(j)),\varpi([p])=d\}|,
\end{equation}

\begin{lemma}
For $j>1$, we have
    $|\{l\in \SNjj,[l]=e\cdot[l],\nu_2(\pi([l]))>1\}| \equiv |\{l\in \SNjj,[l]=e\cdot[l],\nu_2(\pi([l]))=1\}|\mod2$.
\end{lemma}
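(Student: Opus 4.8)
The plan is to reduce the claimed congruence to the single statement that the set $\{[l]\in\SNjj : [l]=e\cdot[l]\}$ has even cardinality, and then to compute that cardinality modulo $2$ via the partition identity~\eqref{eqn:decomppart}.

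First I would record that for every $[l]\in\SNjj$ the period $\pi([l])$ is even: a fundamental domain consisting of $\pi([l])$ consecutive beads contains exactly $\tfrac{j}{2j}\pi([l])=\tfrac12\pi([l])$ blue beads, forcing $2\mid\pi([l])$ (this is also the unnamed lemma preceding Lemma~\ref{lem:neckorbtopart}). Hence the set $\{[l]\in\SNjj : [l]=e\cdot[l]\}$ is the \emph{disjoint} union of the two sets in the statement, namely $\{[l] : [l]=e\cdot[l],\ \nu_2(\pi([l]))>1\}$ and $\{[l] : [l]=e\cdot[l],\ \nu_2(\pi([l]))=1\}$. Since $|X|\equiv|Y|\mod 2$ precisely when $|X\sqcup Y|$ is even, the lemma is equivalent to
\[
\bigl|\{[l]\in\SNjj : [l]=e\cdot[l]\}\bigr|\equiv 0 \mod 2 \qquad\text{for } j>1.
\]

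Next I would invoke Lemma~\ref{lem:neckorbtopart}, which identifies this set with $\{[p]\in\Orb(C,\mathrm{Part}(j)) : \nu_2(\varpi([p]))=0\}$, i.e.\ with the cyclic classes of compositions of $j$ of odd period. It then remains to see that the number of such classes is even for $j>1$. For this I would reduce the orbit-counting identity~\eqref{eqn:decomppart},
\[
2^{j-1}=\sum_{d} d\cdot\bigl|\{[p]\in\Orb(C,\mathrm{Part}(j)) : \varpi([p])=d\}\bigr|,
\]
modulo $2$: the summands with $d$ even contribute $0$, so $2^{j-1}\equiv\sum_{d\text{ odd}}\bigl|\{[p] : \varpi([p])=d\}\bigr|=\bigl|\{[p] : \varpi([p])\text{ odd}\}\bigr|\mod 2$. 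For $j>1$ the left-hand side is even, which gives the claim, and hence the lemma.

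There is no real obstacle here: the only step demanding any care is the observation that $\pi([l])$ is always even, which is exactly what guarantees that the two sets of the statement together exhaust $\{[l] : [l]=e\cdot[l]\}$ rather than omitting an odd-period piece; after that the argument is a one-line parity computation from \eqref{eqn:decomppart} and Lemma~\ref{lem:neckorbtopart}. The hypothesis $j>1$ is genuinely needed — for $j=1$ the single orbit $(\underline1\,1)$ is $e$-fixed with $\nu_2(\pi)=1$, and the two sides of the asserted congruence are $0$ and $1$.
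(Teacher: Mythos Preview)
Your proposal is correct and follows essentially the same route as the paper: reduce to showing $\bigl|\{[l]\in\SNjj:[l]=e\cdot[l]\}\bigr|\equiv0\pmod 2$ via the observation that $\pi([l])$ is always even, translate to cyclic partition classes of odd period by Lemma~\ref{lem:neckorbtopart}, and read off the parity from \eqref{eqn:decomppart}. Your write-up is in fact slightly more explicit than the paper's about the disjoint-union step and about why $j>1$ is needed.
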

\begin{proof}
By \eqref{eqn:decomppart}, we have
\begin{align*}
    |\{[p]\in\Orb(C,\mathrm{Part}(j)),\nu_2(\varpi([p]))=0\}| = &\sum_{\nu_2(d)=0}d|\{[p]\in\Orb(C,\mathrm{Part}(j)),\varpi([p])=d\}|\\
    \equiv & \sum_{d}d|\{[p]\in\Orb(C,\mathrm{Part}(j)),\varpi([p])=d\}|\mod2\\
    = & 2^{j-1}\equiv 0\mod2.
\end{align*}
Then by Lemma \ref{lem:neckorbtopart}, we have
\[
|\{l\in \SNjj,[l]=e\cdot[l]\}|\equiv 0\mod2.
\]
\end{proof}

Therefore, we have
\begin{equation}\label{eqn:secondtothelast}
    \begin{split}
        |\SNt^s| 
\equiv & \sum_{[l]\in \Orb(C_{j},\Neck(j,\frac j2)),[l]\neq e\cdot[l]}\frac{|\phi^{-1}([l],[l])|}{2} \\
& + |\{l\in \SNjj,[l]=e\cdot[l],\nu_2(\pi([l]))=1\}|\mod2.
    \end{split}
\end{equation}

\subsection{Conclusion of the proof of Theorem \ref{thm:closedformula-2}}\label{subsection:conc2}
First, we have:
\begin{lemma}
For $[l]\in \Orb(C_{j},\Neck(j,\frac j2))$, we have
$|\phi^{-1}([l],[l])| = \frac{\pi([l])}{2}$.
\end{lemma}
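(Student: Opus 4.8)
The plan is to describe the fiber $\phi^{-1}([l],[l])$ explicitly as the set of ways of interleaving two copies of a representative of $[l]$, and then to count the resulting $C_{2j}$-orbits by hand.

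First I fix a representative $l$ of $[l]$, recording it by its set $B\subseteq\bZ/j\bZ$ of blue beads, and write $\pi:=\pi([l])$, so that the stabilizer of $B$ under rotation is the unique subgroup of $C_j$ of order $j/\pi$, namely $\langle r^{\pi}\rangle$. Identify the even (resp.\ odd) positions of $\bZ/2j\bZ$ with $\bZ/j\bZ$ in the obvious way. Any $[\tilde l]\in\Orb(C_{2j},\Neck(2j,j))$ with $\phi([\tilde l])=([l],[l])$ has, on its even positions and on its odd positions, rotates of $B$; rotating by a suitable even amount I may assume the even positions carry exactly $B$, and then the odd positions carry $B+t$ for some $t\in\bZ/j\bZ$. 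Call the resulting necklace $\tilde l_t$. Since $B+t=B+t'$ precisely when $t-t'$ stabilizes $B$, the subset $\tilde l_t$ depends only on $t\bmod\pi$, and hence $\phi^{-1}([l],[l])=\{[\tilde l_t]:t\in\bZ/\pi\bZ\}$; it remains to decide which of these classes coincide.

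Next I work out the identifications by tracking the $C_{2j}$-action. An even rotation shifts the whole interleaving pattern and contributes only the relation $[\tilde l_t]=[\tilde l_{t'}]$ for $t'\equiv t\pmod\pi$, which is already built in. An odd rotation interchanges the even and odd positions; a short computation, renormalizing afterwards so that the even positions again carry $B$, shows that the $C_{2j}$-orbit of $\tilde l_t$ also contains $\tilde l_{t'}$ with $t'\equiv -t-1\pmod\pi$. Hence $[\tilde l_t]=[\tilde l_{t'}]$ if and only if $t'\equiv t$ or $t'\equiv -t-1\pmod\pi$, so $|\phi^{-1}([l],[l])|$ equals the number of orbits of the involution $t\mapsto -t-1$ on $\bZ/\pi\bZ$. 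To finish I note that $\pi=\pi([l])$ is even: $B$ is a union of cosets of the order-$(j/\pi)$ stabilizer $\langle r^\pi\rangle$, so $j/\pi$ divides $|B|=j/2$ and $\pi/2$ is an integer. Therefore $2t\equiv -1\pmod\pi$ has no solution, the involution $t\mapsto -t-1$ is fixed-point free on the $\pi$-element set $\bZ/\pi\bZ$, and it has $\pi/2$ orbits; thus $|\phi^{-1}([l],[l])|=\pi([l])/2$.

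The step I expect to be most delicate is the bookkeeping in the third paragraph: carefully describing how an odd rotation of $\bZ/2j\bZ$ acts on the pair consisting of the pattern on the even positions and the pattern on the odd positions, and pinning down that the induced self-map of the parameter $t$ is the reflection $t\mapsto -t-1$ rather than a mere translation. This is precisely the point that produces the factor $\tfrac12$, and an off-by-one or a sign slip there would change the count.
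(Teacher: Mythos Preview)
Your proof is correct and follows exactly the interweaving count the paper has in mind when it writes ``Similar to Lemma~\ref{lem:fibperiod1}'': fix one half to be $B$, parametrize insertions of the second copy by $t\in\bZ/\pi\bZ$, and quotient by the residual $C_{2j}$-action. Your explicit identification of the odd-rotation action as $t\mapsto -t-1$ and your observation that $\pi([l])$ is forced to be even (since $j/\pi$ divides $|B|=j/2$) are precisely the details the paper leaves implicit; they are both correct as written.
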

\begin{proof}
    Similar to Lemma \ref{lem:fibperiod1}.
\end{proof}
Then
\begin{align*}
\sum_{[l]\in \Orb(C_{j},\Neck(j,\frac j2)),[l]\neq e\cdot[l]}\frac{|\phi^{-1}([l],[l])|}{2}  = &\sum_{[l]\in \Orb(C_{j},\Neck(j,\frac j2)),[l]\neq e\cdot[l]}\frac{|\pi([l])|}{2}\\
 = & \sum_{[l]\in \Orb(C_{j},\Neck(j,\frac j2))}\frac{|\pi([l])|}{2} - \sum_{[l]\in \Orb(C_{j},\Neck(j,\frac j2)),[l]= e\cdot[l]}\frac{|\pi([l])|}{2}\\
\end{align*}
Since $\pi([l])\equiv0\mod2$, we have
\[
\sum_{[l]\in \Orb(C_{j},\Neck(j,\frac j2)),[l]= e\cdot[l]}\frac{|\pi([l])|}{2}\equiv |[l]\in \Orb(C_{j},\Neck(j,\frac j2)),[l]= e\cdot[l],\nu_2(\pi(l))=1|\mod 2.
\]
As 
\[
\{[l]\in \Orb(C_{j},\Neck(j,\frac j2)),[l]= e\cdot[l],\nu_2(\pi(l))=1\} =\{[l]\in \Orb(C_{j},\Neck(2j, j)),[l]= e\cdot[l],\nu_2(\pi(l))=1\},
\]
we can conclude
\begin{proposition}\label{prop:deltatwistedjeven}
    $\Delta'(2j,j) = \frac12\binom{2j}{j}\cdot (u-1) $, for $j\equiv 0\mod2$.
\end{proposition}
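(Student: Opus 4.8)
The plan is to combine everything assembled in Sections~\ref{subsection:reduction} and~\ref{subsection:relatetopart} into one parity count. By Lemma~\ref{lem:deltaprimevalue}, $\Delta'(2j,j) = (u-1)|\SNt^s|$, and by $2(u-1)=0$ it suffices to evaluate $|\SNt^s|$ modulo $2$. Equation~\eqref{eqn:secondtothelast} already expresses this parity as a sum of two terms, the first being $\sum_{[l]\in \Orb(C_{j},\Neck(j,\frac j2)),[l]\neq e\cdot[l]}\tfrac12|\phi^{-1}([l],[l])|$ and the second being $|l\in \SNjj,[l]=e\cdot[l],\nu_2(\pi([l]))=1|$. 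So the task is to show the sum of these two terms is congruent to $\tfrac12\binom{2j}{j}$ mod $2$.

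First I would handle the first term. Using the lemma computing $|\phi^{-1}([l],[l])| = \tfrac{\pi([l])}{2}$ for $[l]\in\Orb(C_j,\Neck(j,\tfrac j2))$, the term becomes $\sum_{[l]\neq e\cdot[l]}\tfrac{\pi([l])}{4}$; rewriting as a full sum minus the $[l]=e\cdot[l]$ contribution gives
\[
\sum_{[l]\in\Orb(C_j,\Neck(j,\frac j2))}\frac{\pi([l])}{4} - \sum_{[l]=e\cdot[l]}\frac{\pi([l])}{4}.
\]
The full sum telescopes via the orbit-counting identity $\sum_{[l]}\pi([l]) = |\Neck(j,\tfrac j2)| = \binom{j}{j/2}$ (the same M\"obius/necklace bookkeeping used in Proposition~\ref{prop:S1}), so it equals $\tfrac14\binom{j}{j/2}$. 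The subtracted sum, since $\pi([l])\equiv 0\bmod 2$ always (as $\nu_2(2j)>\nu_2(j)$ forces $\pi$ even on twisted-relevant orbits, or directly: an $e$-fixed orbit has even period by the lemma giving $e\cdot l = r^{\pi([l])/2}l$), reduces mod~$2$ to $|[l]=e\cdot[l],\nu_2(\pi([l]))=1|$. That piece exactly cancels the second term in~\eqref{eqn:secondtothelast} modulo~$2$, after identifying $\{[l]\in\Orb(C_j,\Neck(j,\frac j2)):[l]=e\cdot[l]\}$ with $\{[l]\in\SNjj:[l]=e\cdot[l]\}$ (a bead-doubling bijection respecting period parity). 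Hence $|\SNt^s| \equiv \tfrac14\binom{j}{j/2} \bmod 2$; wait — one must instead keep the arithmetic exact enough: the correct bookkeeping yields $|\SNt^s|\equiv \tfrac12\binom{2j}{j}\bmod 2$ once one replaces $\binom{j}{j/2}$-type quantities by $\binom{2j}{j}$ via Corollary~\ref{cor:corKummer1} ($\nu_2\binom{j}{j/2}=\nu_2\binom{2j}{j}$ is false in general, so the actual route is through the exact value $\sum\pi([l])$ for $\Neck(2j,j)$, not $\Neck(j,\tfrac j2)$).

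The cleanest path, and the one I would actually write, is: assemble~\eqref{eqn:secondtothelast}, apply the $|\phi^{-1}|$ lemma, split off the $e$-fixed part, use $\pi([l])\equiv0\bmod2$ to collapse everything with $\nu_2(\pi)\geq 2$ to even contributions, cancel the $\nu_2(\pi)=1$ pieces against the second term of~\eqref{eqn:secondtothelast} via Lemma~\ref{lem:neckorbtopart} and the partition count $2^{j-1}$, and conclude that the surviving quantity is $\tfrac12\binom{2j}{j}\bmod 2$ coming from the telescoped necklace sum for $\Neck(2j,j)$. Then Lemma~\ref{lem:deltaprimevalue} gives $\Delta'(2j,j) = (u-1)|\SNt^s| = \tfrac12\binom{2j}{j}(u-1)$, since only the parity of $|\SNt^s|$ matters. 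The main obstacle is the careful sign/parity bookkeeping in matching the two $e$-fixed correction terms so that they cancel rather than double — in particular verifying that the bead-doubling identification $\Orb(C_j,\Neck(j,\tfrac j2))^{e\text{-fix}} \cong \SNjj^{\,e\text{-fix}}$ preserves the condition $\nu_2(\pi)=1$, and that the period-parity lemmas are being applied to the right group ($C_j$ versus $C_{2j}$). Everything else is the arithmetic of $\GW(\bF_q)$ already recorded and routine necklace counting.
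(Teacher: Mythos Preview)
Your approach is the paper's approach: start from \eqref{eqn:secondtothelast}, plug in the fiber-count lemma $|\phi^{-1}([l],[l])|=\pi([l])/2$, split off the $e$-fixed part, cancel the $\nu_2(\pi)=1$ correction against the second term of \eqref{eqn:secondtothelast}, and finish by translating $\tfrac12\binom{j}{j/2}$ into $\tfrac12\binom{2j}{j}$ via Kummer. But two concrete errors prevent your write-up from closing.

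First, you assert that ``$\nu_2\binom{j}{j/2}=\nu_2\binom{2j}{j}$ is false in general.'' This is exactly backwards: Corollary~\ref{cor:corKummer1} with $n=j$ and lower index $j/2$ gives $\nu_2\binom{j}{j/2}=\nu_2\binom{2j}{j}$ on the nose, and this is precisely the conversion the paper invokes in the last line of its proof. So the route you reject is the correct one.

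Second, your parity reduction of the subtracted sum does not work as written. You claim $\sum_{[l]=e\cdot[l]}\pi([l])/4$ reduces modulo $2$ to $|\{[l]=e\cdot[l]:\nu_2(\pi([l]))=1\}|$. But when $\nu_2(\pi)=1$ the summand $\pi/4$ is a half-integer, so this congruence is not even well-posed. The paper's version of this step operates with $\pi/2$, not $\pi/4$: one has $\sum_{[l]}\pi([l])/2=\tfrac12\binom{j}{j/2}$ exactly, and $\sum_{[l]=e\cdot[l]}\pi([l])/2\equiv |\{[l]=e\cdot[l]:\nu_2(\pi)=1\}|\bmod 2$ because each $\pi$ is even; this count is then identified with the second term of \eqref{eqn:secondtothelast} and cancels. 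Your factor-of-$4$ arises from applying both the lemma $|\phi^{-1}|=\pi/2$ \emph{and} the extra $1/2$ from the $[l_1]\leftrightarrow e\cdot[l_1]$ pairing simultaneously inside a single mod-$2$ reduction, rather than first collapsing the pair and then reducing. Once you fix this bookkeeping you land on $|\SNt^s|\equiv\tfrac12\binom{j}{j/2}\bmod 2$, and then Corollary~\ref{cor:corKummer1} (not a detour through $\Neck(2j,j)$) finishes the job.
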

\begin{proof}
    By \eqref{eqn:secondtothelast}, and the calculation in this subsection, we have
    \[
    |\SNt^s| 
\equiv \frac12\binom{j}{\frac j2}\mod 2,
    \]
    Then by Lemma \ref{lem:deltaprimevalue} and Corollary \ref{cor:corKummer1} of Kummer's theorem, we have
    \[
    \Delta'(2j,j) = \frac12\binom{2j}{j}\cdot (u-1) 
    \]
\end{proof}
The proof of Theorem \ref{thm:closedformula-2} is concluded by combining Proposition \ref{prop:deltatwistedjeven} and Corollary \ref{cor:deltatwistedodd}.

\subsection{Explicit value of twisted binomial coefficients}\label{subsection:expval}
By Corollary \ref{cor:corKummer3} of Kummer's theorem, we have
\[
{L[Q]/k \choose j} = {2j \choose j}+(u-1)\delta(j),
\]
where $\delta(j)=\left\{\begin{array}{cc}
    1 &  j = 2^m,m\in\bZ_{>0} \\
   0  & \textrm{else}
\end{array}\right.$.

See Example~\ref{ex:twisted-enriched-bin-finite-field} for computations with small $j$.

\bibliographystyle{amsplain}
\bibliography{Bibli}

\end{document}